 \def \no{\nonumber}
\def\q{\mathcal{Q}_{a,b}^{f,h}}
\def\im{\int_M}
\def\dmz{d\mu_{g_0}}
\def\idm{\int_{\partial M}}
\def\ds{d\sigma_g}
\def\dsz{d\sigma_{g_0}}
\def\pa{\partial}
\def\e{\epsilon}
\def\criti{\frac{2n}{n-2}}
\def\critb{\frac{2(n-1)}{n-2}}
\begin{document}
\renewcommand{\proofname}{\bf Proof}
\let\oldsection\section
\renewcommand\section{\setcounter{equation}{0}\oldsection}
\renewcommand\thesection{\arabic{section}}
\renewcommand\theequation{\thesection.\arabic{equation}}
\newtheorem{claim}{\indent Claim}[section]
\newtheorem{theorem}{\indent Theorem}[section]
\newtheorem{lemma}{\indent Lemma}[section]
\newtheorem{proposition}{\indent Proposition}[section]
\newtheorem{definition}{\indent Definition}[section]
\newtheorem{remark}{\indent Remark}[section]
\newtheorem{corollary}{\indent Corollary}[section]
\newtheorem{example}{\indent Example}[section]
\title{\Large \bf
Prescribed scalar curvature plus mean curvature flows in compact manifolds with boundary of negative conformal invariant}

\author{Xuezhang  Chen\thanks{X. Chen: xuezhangchen@nju.edu.cn; $^\dag$P. T. Ho: ptho@sogang.ac.kr/paktungho@yahoo.com.hk; $^\ddag$L. Sun: ls680@math.rutgers.edu.}~, Pak Tung Ho$^\dag$ and Liming Sun$^\ddag$\\
 \small
$^\ast$Department of Mathematics, Nanjing University, Nanjing
210093, P. R. China\\
\small
$^\dag$Department of Mathematics, Sogang University, Seoul 121-742, Korea\\
\small
$^\ddag$Department of Mathematics, Rutgers University, 110 Frenlinghuysen Road, Piscataway NJ 08854, USA
}

\date{}
\maketitle
\begin{abstract}
We employ three different methods to prove the following result on prescribed scalar curvature plus mean curvature problem: Let $(M^n,g_0)$ be a
$n$-dimensional smooth compact manifold with boundary, where $n \geq 3$, assume the conformal invariant $Y(M,\partial M)<0$. Given any negative smooth functions $f$ in $M$ and $h$ on $\partial M$, there exists a unique conformal metric of $g_0$ such that its scalar curvature equals $f$ and mean curvature curvature equals $h$. The first two methods are sub-super-solution method and subcritical approximation, and the third method is a geometric flow. In the flow approach, assume another conformal invariant $Q(M,\pa M)$ is a negative real number, for some class of initial data, we prove the short time and long time existences of the so-called prescribed scalar curvature plus mean curvature flows, as well as their asymptotic convergence. Via a family of such flows together with some additional variational arguments, under the flow assumptions we prove existence and uniqueness of positive minimizers of the associated energy functional and also the above result by analyzing asymptotic limits of the flows and the relations among some conformal invariants.

{{\bf $\mathbf{2010}$ MSC:} Primary 53C44, 53C21, 35B50; Secondary 35J60, 35K55, 35R01.}

{{\bf Keywords:} Scalar curvature, mean curvature, curvature flow, negative conformal invariant.}
\end{abstract}

\section{Introduction}

Let $(M,g_0)$ be a $n$-dimensional smooth compact Riemannian manifold without boundary,
where $n\geq 3$. As a generalization of Uniformization Theorem,
the Yamabe problem is to find a metric conformal to $g_0$ such that its scalar curvature is constant.
This problem was solved by Yamabe, Trudinger, Aubin and Schoen. See the survey article \cite{Lee&Parker} by Lee and Parker
for more details.
The Yamabe flow is a geometric flow introduced to tackle the Yamabe problem.
See \cite{Brendle4,Brendle5} and references therein
for results of Yamabe flow on closed manifolds.

Analogues of the Yamabe problem have been studied in smooth compact Riemannian manifolds with boundary:
Find a conformal metric such that its scalar curvature
in $M$ equals $c_1$ and its mean curvature on the boundary $\partial M$ equals $c_2$,
where $c_1$ and $c_2$ are two constants.
The special case when $c_1=0$ or $c_2=0$ was studied by Escobar \cite{escobar1,escobar3} (see also \cite{Marques1, Marques2, Brendle&Chen, Almaraz,Almaraz1}),
and the corresponding
geometric flow was introduced and studied by Brendle \cite{Brendle1} (see also \cite{Almaraz,Almaraz-Sun}).
The general case was studied by Araujo \cite{Araujo1, Araujo2}, Cherrier \cite{cherrier}, Escobar \cite{escobar}  and Han-Li \cite{han-li1,han-li2}.
Among them, we mention the following result due to Han-Li:
Let $(M,g_0)$ be a $n$-dimensional smooth compact Riemannian manifold with boundary which has positive
(generalized) Yamabe constant (see (\ref{Yamabe_constant}) below for the definition). Assume that the boundary has at least one nonumbilic point with $n \geq 5$, or $(M,g_0)$ is locally conformally flat with umbilic boundary for $n\geq 3$. Then there exists a metric conformal to $g_0$ such that its scalar curvature in $M$ is equal to $1$
and its mean curvature on $\partial M$ is equal to any given constant $c$. Araujo \cite{Araujo1, Araujo2} gave some characterizations of critical points (including the minimizers) for the total curvature plus total mean curvature functional (see \eqref{eq:energy}) with some volume constraints, as well as its second variation.

More generally, we would like to study the following prescribed scalar curvature plus mean curvature problem:
For a $n$-dimensional smooth compact Riemannian manifold $(M,g_0)$ with boundary $\partial M$,
can we find a conformal metric such that
its scalar curvature equals a given function $f$ in $M$ and
its mean curvature equals a given function $h$ on $\partial M$? This problem is equivalent to
finding a positive solution $u$
 of the following PDE problem:
\begin{align}\label{problem:smc}
\left\{\begin{array}{ll}
\displaystyle-\frac{4(n-1)}{n-2}\Delta_{g_0}u+R_{g_0}u=fu^{\frac{n+2}{n-2}},&\hspace{2mm}\mbox{ in }M,\\
\displaystyle\frac{\partial u}{\partial \nu_0}+\frac{n-2}{2}h_{g_0}u=\frac{n-2}{2}h u^{\frac{n}{n-2}},&\hspace{2mm}\mbox{ on }\partial M,
\end{array}
\right.
\end{align}
where $R_{g_0}$ is scalar curvature, $h_{g_0}$ is mean curvature and $\nu_0=\nu_{g_0}$ is the unit outward normal on $\partial M$. This problem has been studied by Zhang \cite{ZhangL} in dimension three. More background can be found in \cite{ZhangL,escobar4, Djadli-Mal-Ahm} and the references therein.

Before stating our results, we need to introduce some notations. The (generalized) Yamabe constant $Y(M,\partial M)$ is defined as
\begin{equation}\label{Yamabe_constant}
Y(M,\partial M):=\inf_{g\in[g_0]}\frac{\int_MR_gd\mu_g+2(n-1)\int_{\partial M}h_gd\sigma_g}{(\int_{M}d\mu_g)^{\frac{n-2}{n}}}.
\end{equation}
Similarly, we can define (cf. \cite{escobar1})
\begin{equation*}
Q(M,\partial M):=\inf_{g\in[g_0]}\frac{\int_MR_gd\mu_g+2(n-1)\int_{\partial M}h_gd\sigma_g}{(\int_{M}\ds)^{\frac{n-2}{n-1}}}.
\end{equation*}
It was first pointed out by Zhiren Jin (cf. \cite{escobar2}) that $Q(M,\partial M)$ could be $-\infty$, meanwhile $Y(M,\partial M)>-\infty$.  Moreover, it is easy to show that if $Q(M,\partial M)$ is negative and finite, then $Y(M,\pa M)<0$. The total scalar curvature plus total mean curvature functional is defined as
\begin{equation}\label{eq:energy}
E[u]=\int_M (\tfrac{4(n-1)}{n-2}|\nabla u|_{g_0}^2 +R_{g_0}u^2) d\mu_{g_0}+2(n-1)\int_{\partial M}h_{g_0}u^2 d\sigma_{g_0}.
\end{equation}
We define
\begin{equation*}
\q[u]=\frac{E[u]}{a\Big(\int_M -f|u|^{\frac{2n}{n-2}}d\mu_{g_0}\Big)^{\frac{n-2}{n}}+2(n-1)b \Big(\int_{\partial M}-h|u|^{\frac{2(n-1)}{n-2}}d\sigma_{g_0}\Big)^{\frac{n-2}{n-1}}}.
\end{equation*}
Compared with the constraint functional in \cite{escobar},
$\q[u]$ has the advantage that it is homogeneous in $u$.  A direct computation shows that up to some constant multiples of $f$ and $h$, every critical point of $\q$ satisfies problem \eqref{problem:smc}.

The following theorem gives a partial answer to the above problem under the assumption that $Y(M,\partial M)<0$.
The readers may compare our theorem with \cite[Theorem 0.3]{han-li2}.
\begin{theorem}\label{Thm_sub_super_sols}
Let $(M,g_0)$ be a $n$-dimensional smooth compact Riemannian manifold with boundary $\partial M$, where $n \geq 3$. If $Y(M,\pa M)<0$, then for any negative smooth functions $f$ in $M$  and $h$ on $\partial M$ respectively, there exists a unique conformal metric $g \in [g_0]$ such that $R_g=f$ and $h_g=h$.
\end{theorem}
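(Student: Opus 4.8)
The plan is to solve the boundary value problem \eqref{problem:smc} by the method of sub- and super-solutions (monotone iteration) and then to establish uniqueness by a maximum principle argument. Throughout, write $L_{g_0}u=-\frac{4(n-1)}{n-2}\Delta_{g_0}u+R_{g_0}u$ and $B_{g_0}u=\frac{\partial u}{\partial\nu_0}+\frac{n-2}{2}h_{g_0}u$, so that a solution of \eqref{problem:smc} is a positive $u$ with $L_{g_0}u=fu^{(n+2)/(n-2)}$ in $M$ and $B_{g_0}u=\frac{n-2}{2}hu^{n/(n-2)}$ on $\partial M$. The one genuinely delicate point is the conformal normalization carried out first: constant barriers are useless against the boundary condition unless the background mean curvature is made \emph{strictly} negative, not merely zero, and this is arranged by a conformal perturbation concentrated near $\partial M$.

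\emph{Step 1 (conformal normalization).} I would first reduce to the case $\sup_M R_{g_0}<0$ and $\sup_{\partial M}h_{g_0}<0$. Since $Y(M,\partial M)<0$, the first eigenvalue $\lambda_1$ of the problem $L_{g_0}\varphi=\lambda\varphi$ in $M$, $B_{g_0}\varphi=0$ on $\partial M$ is negative, with a first eigenfunction $\varphi_1>0$ on $\overline M$ (strong maximum principle and Hopf lemma). Replacing $g_0$ by $\varphi_1^{4/(n-2)}g_0$ makes $R_{g_0}=\lambda_1\varphi_1^{-4/(n-2)}<0$ and $h_{g_0}\equiv 0$. Then I choose $\eta\ge 0$ smooth, supported in a collar of $\partial M$, with $\frac{\partial\eta}{\partial\nu_0}>0$ on $\partial M$ (e.g.\ $\eta=\chi(t)e^{-2t}$ in Fermi coordinates, $t=\mathrm{dist}(\cdot,\partial M)$, $\chi$ a cutoff with $\chi(0)=1$, $\chi'(0)=0$), and conformally change $g_0$ by $\psi=1-\delta\eta$ with $\delta>0$ small: this preserves $R_{g_0}<0$ (a small perturbation of a function bounded away from $0$) while turning $h_{g_0}$ into $\frac{2}{n-2}\psi^{-n/(n-2)}\frac{\partial\psi}{\partial\nu_0}$, which is strictly negative since $\frac{\partial\psi}{\partial\nu_0}=-\delta\frac{\partial\eta}{\partial\nu_0}<0$. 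Each such change only multiplies the unknown $u$ in \eqref{problem:smc} by a fixed positive smooth function and leaves $f,h$ untouched, so it suffices to solve \eqref{problem:smc} assuming in addition $\sup_M R_{g_0}<0$ and $\sup_{\partial M}h_{g_0}<0$.

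\emph{Step 2 (barriers and iteration).} Under these hypotheses a sufficiently large constant $\bar u\equiv C$ is a super-solution: $R_{g_0}C\ge fC^{(n+2)/(n-2)}$ holds once $C^{4/(n-2)}\ge\sup_M|R_{g_0}|/\inf_M|f|$, and $\frac{n-2}{2}h_{g_0}C\ge\frac{n-2}{2}hC^{n/(n-2)}$ holds once $C^{2/(n-2)}\ge\sup_{\partial M}|h_{g_0}|/\inf_{\partial M}|h|$; symmetrically a sufficiently small constant $\underline u\equiv\epsilon<C$ is a sub-solution. Choosing $K,\Lambda>0$ large enough that $s\mapsto Ks+fs^{(n+2)/(n-2)}$ and $s\mapsto\Lambda s+\frac{n-2}{2}hs^{n/(n-2)}$ are nondecreasing on $[\epsilon,C]$ and that the operator $u\mapsto L_{g_0}u+Ku$, together with the boundary condition $B_{g_0}u+\Lambda u=0$, obeys the maximum principle, the monotone iteration started at $\underline u$ yields $\epsilon\le u^{(1)}\le u^{(2)}\le\cdots\le C$ converging to a solution $u\in[\epsilon,C]$ of \eqref{problem:smc}; elliptic regularity and the smoothness of $f,h$ make $u$ a positive function in $C^\infty(\overline M)$, and $g=u^{4/(n-2)}g_0$ then has $R_g=f$, $h_g=h$.

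\emph{Step 3 (uniqueness).} Given two solutions $u_1,u_2>0$, set $g_i=u_i^{4/(n-2)}g_0$, so $R_{g_i}=f$ and $h_{g_i}=h$. With $\varphi=u_2/u_1>0$ one has $g_2=\varphi^{4/(n-2)}g_1$, and the conformal transformation laws together with $R_{g_1}=R_{g_2}=f$, $h_{g_1}=h_{g_2}=h$ give
\[
-\tfrac{4(n-1)}{n-2}\Delta_{g_1}\varphi=f\,\varphi\big(\varphi^{4/(n-2)}-1\big)\ \text{ in }M,\qquad \tfrac{\partial\varphi}{\partial\nu_1}=\tfrac{n-2}{2}\,h\,\varphi\big(\varphi^{2/(n-2)}-1\big)\ \text{ on }\partial M,
\]
where $\nu_1$ is the $g_1$-outward unit normal. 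Let $x_0\in\overline M$ be a maximum point of $\varphi$. If $x_0\in M$, then $\Delta_{g_1}\varphi(x_0)\le 0$ forces $f\varphi(x_0)\big(\varphi(x_0)^{4/(n-2)}-1\big)\ge 0$, hence $\varphi(x_0)\le 1$ because $f<0$; if $x_0\in\partial M$, then $\frac{\partial\varphi}{\partial\nu_1}(x_0)\ge 0$ forces $h\varphi(x_0)\big(\varphi(x_0)^{2/(n-2)}-1\big)\ge 0$, hence again $\varphi(x_0)\le 1$ because $h<0$. Thus $\varphi\le 1$, i.e.\ $u_2\le u_1$; interchanging $u_1$ and $u_2$ yields $u_1\equiv u_2$, so the conformal metric with $R_g=f$, $h_g=h$ is unique. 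The main obstacle, as noted, is Step 1; once a background with $R_{g_0}<0$ and $h_{g_0}<0$ is in hand, the remaining arguments are routine.
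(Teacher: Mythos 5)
Your proposal is correct, and its core is the same sub-super-solution scheme the paper uses: normalize the background metric so that $R_{g_0}<0$ and $h_{g_0}<0$, take large and small constants as super- and sub-solutions, run the monotone iteration with the shifted operators $L_{g_0}+K$, $B_{g_0}+\Lambda$, and pass to the limit by elliptic regularity. The two places where you deviate are exactly the ingredients the paper handles differently: for the normalization, the paper first invokes Escobar's Lemma 1.1 (to get $R<0$, $h=0$) and then conformally changes by the minimizer of a Steklov-type eigenvalue problem to make both curvatures strictly negative, whereas you use the first eigenfunction of $L_{g_0}$ with the Robin condition $B_{g_0}\varphi=0$ (whose first eigenvalue is negative precisely because $Y(M,\partial M)<0$, via the variational characterization $\lambda_1=\inf E[\varphi]/\int_M\varphi^2$) followed by an explicit collar perturbation $\psi=1-\delta\eta$; both routes are sound, and your computation that $h_{\tilde g}=\frac{2}{n-2}\psi^{-n/(n-2)}\partial_{\nu_0}\psi<0$ while $R_{\tilde g}<0$ persists for small $\delta$ is fine (you should just record that the first Robin eigenfunction can be chosen positive, which is standard). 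For uniqueness, the paper simply cites Escobar's Corollary 2, while you give a direct proof: writing $\varphi=u_2/u_1$ and using the conformal transformation laws with $f<0$, $h<0$, the maximum of $\varphi$ on $\overline M$ is $\le 1$ whether it occurs in the interior (via $\Delta_{g_1}\varphi\le 0$) or on the boundary (via $\partial_{\nu_1}\varphi\ge 0$), and symmetry gives $u_1\equiv u_2$; this argument is correct and makes the proof self-contained, at the modest cost of redoing what the citation provides.
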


Theorem \ref{Thm_sub_super_sols} is proved in Section \ref{Sect2},
and we outline some ideas to show that there is a unique positive solution to PDE problem \eqref{problem:smc}. By the assumption that $Y(M,\partial M)<0$, we can find a background metric $g_0$ with $R_{g_0}<0$ and $h_{g_0}<0$. Selecting a suitable constant $\e>0$, one can show that $\frac{1}{\e}$ and $\e$ are respectively the super and sub solutions to the problem \eqref{problem:smc}. By the standard elliptic theory, there exists a solution $u$ satisfying $\e \leq u \leq \frac{1}{\e}$. The uniqueness of the problem \eqref{problem:smc} was proved by Escobar \cite[Corollary 2]{escobar4}.

For the last few decades, geometric flows have played an important role in prescribed curvature problems in  conformal geometry. Therefore, a different approach is to introduce some negative gradient flow of the energy $\q$, which we call prescribed scalar curvature plus mean curvature flows with interplay between scalar curvature  in $M$ and mean curvature on the boundary $\partial M$. Indeed, depending on the sign of $Y(M,\pa M)$, we set up a series of projects to solve problem \eqref{problem:smc} by using geometric flows. As a preliminary step, we first consider the negative generalized Yamabe constant. Given two constants $a, b>0$, let $f$ be a negative smooth function in $M$ and $h$ be a  negative smooth function on $\partial M$,  the flow metric $g(x,t)=u(x,t)^{\frac{4}{n-2}}g_0$ satisfies
\begin{equation}\label{mixed_cur_flows}
\begin{split}
\left\{\begin{array}{ll}
\displaystyle\partial_t g=(\alpha(t) f^{-1}R_g-\lambda(t))g &\hspace{2mm}\mbox{ in }M,\\
\displaystyle\partial_t g=(\beta(t)h^{-1}h_g-\lambda(t))g&\hspace{2mm}\mbox{ on }\partial M,\\
u(0)=u_0 \in C^\infty(\bar M),
\end{array}
\right.
\end{split}
\end{equation}
where
\begin{equation}\label{eqn:constrant_time_funcs}
\begin{split}
&\alpha(t)=\frac{1}{a}\Big(\int_M -fd\mu_g\Big)^{\frac{2}{n}}, \quad \beta(t)=\frac{1}{b}\Big(\int_{\partial M}-hd\sigma_g\Big)^{\frac{1}{n-1}},\\
&\lambda(t)=-\frac{E[u(t)]}{a\Big(\int_M -f d\mu_g\Big)^{\frac{n-2}{n}}+2(n-1)b\Big(\int_{\partial M}-h d\sigma_g\Big)^{\frac{n-2}{n-1}}}.
\end{split}
\end{equation}
Here $d\mu_g=u^{\frac{2n}{n-2}}d\mu_{g_0}$ and $d\sigma_g=u^{\frac{2(n-1)}{n-2}}d\sigma_{g_0}$, $R_g$ and $h_g$ are the scalar curvature and the mean curvature of the flow metric $g$, respectively. The selections of $\alpha(t),\beta(t),\lambda(t)$ are due to two reasons: one is to make the energy $\q$ non-increasing along the flows, the other is to preserve the following quantity:
$$a\Big(\int_M -f d\mu_g\Big)^{\frac{n-2}{n}}+2(n-1)b\Big(\int_{\partial M}-h d\sigma_g\Big)^{\frac{n-2}{n-1}}.$$

Our flows are somehow inspired by the one defined on surfaces with boundary, which was first studied by Brendle \cite{Brendle2}. We obtain the following result for geometric flows (\ref{mixed_cur_flows}).
\begin{theorem}\label{mainthm}
Let $(M,g_0)$ be a $n$-dimensional smooth compact Riemannian manifold with boundary $\partial M$, where $n \geq 3$. Assume $Q(M,\pa M)$ is a negative real number. Let $f$ be any negative smooth function in $M$  and $h$ be any negative smooth function on $\partial M$. Given any $a,b>0$ and some class of initial data, the flow metric \eqref{mixed_cur_flows} exists for all time and converges to a smooth conformal metric $g_\infty$ of $g_0$, such that
$$R_{g_\infty}=\frac{\lambda_\infty}{\alpha_\infty} f \hbox{~~and~~} h_{g_\infty}=\frac{\lambda_\infty}{\beta_\infty}h,$$
where $\lambda_\infty=\lim_{t \to \infty}\lambda(t), \alpha_\infty=\lim_{t \to \infty}\alpha(t)$ and $\beta_\infty=\lim_{t \to \infty}\beta(t)$ are positive constants.
\end{theorem}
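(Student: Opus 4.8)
The plan is to run the flow \eqref{mixed_cur_flows} as a parabolic PDE for $u(x,t)$ and establish, in order: (i) short-time existence, (ii) uniform a priori bounds $0<c\le u(x,t)\le C<\infty$ for all time, (iii) long-time existence via standard parabolic regularity, and (iv) asymptotic convergence to a stationary solution. First I would rewrite \eqref{mixed_cur_flows} in terms of $u$: using $R_g=u^{-\frac{n+2}{n-2}}(-\frac{4(n-1)}{n-2}\Delta_{g_0}u+R_{g_0}u)$ and the analogous boundary formula for $h_g$, the flow becomes a quasilinear parabolic equation for $u$ with a nonlinear (Robin-type) boundary condition, where the coefficients $\alpha(t),\beta(t),\lambda(t)$ depend on $u$ through the integrals in \eqref{eqn:constrant_time_funcs}. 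Short-time existence follows from the standard theory for quasilinear parabolic equations with oblique boundary conditions (Amann, or Ladyzhenskaya--Solonnikov--Uraltseva), once one checks the structure conditions; I would want the class of initial data to be chosen so that $u_0>0$ and, ideally, so that $u_0$ is already a sub- or super-solution to make the next step cleaner.

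The heart of the argument is step (ii), the uniform two-sided bound on $u$, and this is where I expect the main obstacle. The key structural input is that $f<0$ and $h<0$, so that the reaction terms have a favorable sign: formally, where $u$ is large the term $\alpha f^{-1}R_g$ should act to decrease $u$, and where $u$ is small it should act to increase it, giving a maximum-principle barrier. Concretely I would try to show that constants $\e$ and $1/\e$ (as in the outline after Theorem 1.1, adapted to the flow) are sub- and super-solutions of the parabolic problem, so that the parabolic maximum principle and Hopf lemma on the boundary pin $u$ between them for all time. The subtlety is that $\lambda(t),\alpha(t),\beta(t)$ are nonlocal in $u$, so one must simultaneously control these quantities; here the conserved quantity $a(\int_M -f\,d\mu_g)^{\frac{n-2}{n}}+2(n-1)b(\int_{\partial M}-h\,d\sigma_g)^{\frac{n-2}{n-1}}$ and the monotonicity of $\q$ along the flow (hence a uniform bound on $E[u(t)]$, and via $Y(M,\partial M)<0$ a uniform bound on $\int_M u^{\frac{2n}{n-2}}d\mu_{g_0}$) should give the needed control on $\alpha_\infty,\beta_\infty,\lambda_\infty$ and their positivity. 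The assumption that $Q(M,\partial M)$ is a negative real number (not $-\infty$) is what prevents the boundary volume from degenerating and keeps $\beta(t)$ bounded away from $0$ and $\infty$.

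Given the uniform bounds, step (iii) is routine: $L^\infty$ bounds on $u$ feed into Schauder and $L^p$ parabolic estimates (interior and up to the boundary, using the obliqueness of the boundary condition) to bootstrap to uniform $C^{k,\alpha}$ bounds on $[\tau,\infty)$ for every $\tau>0$, so the flow exists for all time and is smooth. For step (iv), the energy $\q[u(t)]$ is nonincreasing and bounded below (it is bounded below because $Y(M,\partial M)<0$ forces $E$ to be controlled from below on the constraint set), so $\frac{d}{dt}\q \to 0$ along a sequence; combined with the compactness from step (iii) one extracts a subsequential limit $u_\infty$ that is a positive stationary solution, i.e. solves \eqref{problem:smc} with $f,h$ replaced by $\frac{\lambda_\infty}{\alpha_\infty}f$ and $\frac{\lambda_\infty}{\beta_\infty}h$. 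Finally, to upgrade subsequential convergence to full convergence I would either invoke uniqueness of the stationary limit (from Escobar's uniqueness result cited after Theorem 1.1, which forces the limit to be independent of the subsequence) or, if a Lojasiewicz--Simon inequality is available for this functional, use it to get convergence with a rate; uniqueness is the cleaner route here and also gives that $\lambda_\infty,\alpha_\infty,\beta_\infty$ are well-defined positive constants.
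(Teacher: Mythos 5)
Your skeleton (short-time existence, two-sided bounds on $u$, long-time existence, convergence) matches the paper's, and your use of the conserved quantity plus monotonicity of $\q$ to control $\alpha,\beta,\lambda$, with finiteness of $Q(M,\pa M)$ preventing boundary-volume collapse, is exactly right. But there are two genuine gaps. First, step (iii) is not ``routine''. In terms of $u$ the flow is \eqref{evolution_u}: the time derivative carries the factor $u^{\frac{4}{n-2}}$ (a priori only bounded measurable, with no modulus of continuity), and the boundary condition is a \emph{dynamic} one in which $\partial_t u$ appears on $\pa M$ coupled with $\partial u/\partial\nu_0$ --- it is not a Robin/oblique condition, so neither Schauder nor parabolic $L^p$ theory (Amann, LSU) applies directly: their hypotheses require continuity of the coefficients, which is precisely what is unknown. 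This is why the paper devotes Section \ref{Sect6} to a maximum principle for $v=\frac{4}{n-2}\partial_t\log u$ (uniform lower bounds on $R_g$, $h_g$), followed by an iteration on the quantities $F_p(t)$ using the Sobolev and sharp Sobolev trace inequalities to obtain $L^p$ bounds on $R_{g(t)}$ and $h_{g(t)}$ for \emph{all} $p$ (see \eqref{curv_bdd}); only then does one get H\"older continuity of $u$ (à la Brendle) and the parabolic bootstrap. Your proposal skips this analytic core entirely, and without it (or a substitute such as a Krylov--Safonov-type result adapted to the dynamic boundary condition) long-time existence does not follow from the $L^\infty$ bound on $u$ alone.

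Second, your route to full convergence via Escobar's uniqueness does not close. The limiting equation has coefficients $\lambda_\infty/\alpha_\infty$ and $\lambda_\infty/\beta_\infty$ that depend on the limit itself: $\lambda(t)$ converges because it is monotone by \eqref{lambda_t}, but $\alpha(t)$ and $\beta(t)$ are not monotone, and two subsequential limits could a priori solve prescribed-curvature problems with \emph{different} constants (both compatible with the constraint \eqref{eq:preserve_const}), so Escobar's uniqueness theorem --- which fixes the prescribed functions --- cannot be invoked to identify them. The paper's resolution is the Lojasiewicz--Simon inequality, which gives $\int_0^\infty F_2(t)^{1/2}\,dt<\infty$ and hence, via $|\partial_t\alpha|,|\partial_t\beta|\le CF_2(t)^{1/2}$ (see \eqref{alpha_t_beta_t_bdd}), both the existence of $\alpha_\infty,\beta_\infty$ and the uniqueness of the asymptotic limit; so the Lojasiewicz route you mention as an alternative is in fact essentially forced. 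A smaller point: the ``class of initial data'' must be made precise --- a background/initial metric with $R_{g_0}<0$, $h_{g_0}<0$ and $E[u_0]<0$ after normalization \eqref{conserved_quantity} --- since the maximum-principle barriers (whether your constant barriers or the paper's) need $\min_M f^{-1}R_{g_0}>0$ and $\min_{\pa M}h^{-1}h_{g_0}>0$ together with the bounds \eqref{alpha_beta_lambda_bdd}.
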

The above theorem extends the first and second authors' results in \cite{ChenHo}. We point out that it follows from
Theorem \ref{mainthm} that there exists a conformal metric of $g_0$ such that its scalar curvature equals $f$ and its mean curvature curvature equals $c_\infty h$ for some positive constant $c_\infty=c_\infty(a,b, f,h)$. In general, $c_\infty$ is not necessarily equal to 1.

Here we sketch the proof of Theorem \ref{mainthm}, which covers Sections \ref{Sect3}-\ref{Sect6}.  Along the flow (\ref{mixed_cur_flows}),
 $\q[u(t)]$ is non-increasing. Together with the above conserved quantity, we have the uniform bounds for $\alpha(t), \beta(t)$ and $\lambda(t)$
on any finite time interval. In particular, $\lambda(t)$ is nondecreasing. For a special class of initial data, we employ maximum principle to derive the uniform bounds for conformal factor $u(x,t)$, and uniform lower bounds for both scalar curvature $R_{g(t)}$ and mean curvature $h_{g(t)}$. Based on these, we eventually obtain $L^p(M,g_0)$ estimates with all $p \geq 2$ for $R_{g(t)}$ and $h_{g(t)}$ on any finite time interval $[0,T]$. We should mention that the negativity of $Y(M,\pa M)$ plays an important role in the above estimates.  By the standard parabolic theory, the long time existence of the flows follows. With the help of Simon-Lojasiewicz inequality, we establish the asymptotic convergence of the flows and arrive at an important integral estimate, which enables us to conclude that the asymptotic limits of $\alpha(t)$ and $\beta(t)$ exist. Moreover, we give a criterion of the uniqueness of asymptotic limits for different qualified initial data. Roughly speaking, if the energies of asymptotic limits (equivalently critical values of $\q$) for different initial data  coincide, so do asymptotic limits themselves.

As applications of these flows, in Section \ref{Sect7} we prove existence and uniqueness of minimizers for $\q$ by constructing a minimizing sequence of asymptotic limits, which are generated by a $(a,b)$-family of such flow equations \eqref{mixed_cur_flows}. Furthermore, through some delicate analysis on the relations among some conformal invariants, we establish that there exist some $a,b>0$ and the corresponding minimizer $u_{a,b}$ (up to a positive constant scaling) for $\q$ such that $R_{g_{a,b}}=f$ and $h_{g_{a,b}}=h$, where $g_{a,b}=u_{a,b}^{4/(n-2)}g_0$. This means that the unique solution in Theorem \ref{Thm_sub_super_sols} can be realized by the unique minimizer for $\q$ with some suitable $a,b>0$. However, the uniqueness of minimizers for $\q$, whose proof is included in Proposition \ref{prop:criterion_asy_limits},  can not directly follow from the uniqueness of solutions to  PDE problem \eqref{problem:smc}.   We also mention that the minimizer for $\q$ can also be obtained by subcritical approximation (see Remark \ref{rem:subcritical_approx}).

Compared with the aforementioned sub-super-solution method, our flow approach
has the main advantage that it can also be adapted to any compact manifold of positive conformal invariant, which will be left to our ongoing work. So the techniques and estimates developed here will benefit us a lot in the future.

\noindent {\bf Acknowledgments:} This work was carried out during the first author's visit at Rutgers University. He is grateful to Professor Yanyan Li for the invitation to Rutgers University and fruitful discussions.  He also would like to thank  mathematics department at Rutgers University for its hospitality and financial support. He is supported by NSFC (No.11201223), A Foundation for the Author of National Excellent Doctoral Dissertation of China (No.201417), Program for New Century Excellent Talents in University (NCET-13-0271), the travel grants from AMS Fan fund and Hwa Ying foundation at Nanjing University. The second author is supported by the National Research Foundation of Korea (NRF) grant (No.201531021.01) funded by the Korea government (MEST).

\section{Proof of Theorem \ref{Thm_sub_super_sols}}\label{Sect2}

 This section is devoted to the proof of Theorem \ref{Thm_sub_super_sols} by using
Kazdan-Warner's sub-super-solution method \cite{kazdan-warner} with minor modifications.

\noindent {\bf Proof of Theorem \ref{Thm_sub_super_sols}.~~}
Since $Y(M,\partial M)<0$, it follows from assertion \eqref{initial_metric} below that we may assume the background metric $g_0$ satisfies $R_{g_0}<0$ and $h_{g_0}<0$. If we choose
$$0<\epsilon<\min\left\{\min_M \left(\frac{f}{R_{g_0}}\right)^{\frac{n-2}{4}}, \min_M\left(\frac{h}{h_{g_0}}\right)^{\frac{n-2}{2}},\min_M \left(\frac{R_{g_0}}{f}\right)^{\frac{n-2}{4}}, \min_M\left(\frac{h_{g_0}}{h}\right)^{\frac{n-2}{2}}\right\},$$
then  $\bar u=\frac{1}{\epsilon}$ and $\underline u=\epsilon$ are super and sub solutions to \eqref{problem:smc}, respectively. We claim that there exists a solution $u$ to problem \eqref{problem:smc} satisfying $\underline u\leq u\leq \bar u$.

Fix large constants $N=N(\epsilon)>0$ and $H=H(\epsilon)>0$ and let $F(x,u)=fu^{\frac{n+2}{n-2}}+Nu, G(x,u)=\frac{n-2}{2}(hu^{\frac{n}{n-2}}+Hu)$, there hold
$R_{g_0}+N>0$ in $M$, $h_{g_0}+H>0$ on $\partial M$
and
\begin{equation} \label{monotone:F_G}
\partial_s F(x,s)>0,\partial_s G(x,s)>0~~\forall~~ \underline u\leq s\leq \bar u.
\end{equation}
Rewrite the equations in problem \eqref{problem:smc} as
\begin{align*}
\tilde Lu:=&-\frac{4(n-1)}{n-2}\Delta_{g_0} u+(R_{g_0}+N)u=-fu^{\frac{n+2}{n-2}}+Nu=F(x,u) \hbox{~~in~~} M,\\
\tilde Bu:=&\frac{\partial u}{\partial \nu_0}+\frac{n-2}{2}(h_{g_0}+H)u=\frac{n-2}{2}(hu^{\frac{n}{n-2}}+Hu) \hbox{~~on~~} \partial M.
\end{align*}
Furthermore, we have
\begin{align*}
\tilde L\bar u \geq F(x,\bar u) \hbox{~~in~~} M, ~~\tilde B\bar u \geq G(x,\bar u) \hbox{~~on~~} \partial M;\\
\tilde L\underline u \leq F(x,\underline u) \hbox{~~in~~} M, ~~\tilde B\underline u \leq G(x,\underline u) \hbox{~~on~~} \partial M.
\end{align*}
Let $u_1=\underline u$ and define
\begin{align*}
\tilde Lu_{i+1}=F(x,u_i) \hbox{~~in~~} M, ~~\tilde B u_{i+1}=G(x,u_i) \hbox{~~on~~} \partial M
\end{align*}
for all $i \in \mathbb{N}$. Since $\tilde Lu_2=F(x,u_1)=F(x,\underline u)\geq \tilde L\underline u$ and $\tilde B u_2=G(x,u_1)=G(x,\underline u)\geq \tilde B \underline u$, the maximum principle for $\tilde L$ and $\tilde B$ gives $u_2 \geq u_1=\underline u$. By induction, if $u_i\geq u_{i-1}$ for some $i \geq 2$, then $u_{i+1}\geq u_i$ follows by applying maximum principle to $\tilde Lu_{i+1}=F(x,u_i)\geq F(x,u_{i-1})\geq  \tilde Lu_{i-1}$ and $\tilde B u_{i+1}=G(x,u_i)\geq G(x,u_{i-1})\geq  \tilde B u_{i-1}$, where we have used \eqref{monotone:F_G}. On the other hand, notice that $u_1=\underline u \leq \bar u$. By induction, if $u_i \leq \bar u$, similarly $u_{i+1}\leq \bar u$ follows from $\tilde Lu_{i+1}=F(x,u_i)\leq F(x,\bar u)\leq \tilde L\bar u$ and $\tilde Bu_{i+1}=G(x,u_i)\leq G(x,\bar u)\leq \tilde B\bar u$. Hence we obtain a sequence of nondecreasing functions $\{u_i\}$ satisfying $\forall i, \underline u\leq u_i \leq \bar u, u_i \in C^{2,\alpha}(M),~0<\alpha<1, |F(x,u_i)|\leq C$ and $|G(x,u_i)|\leq C$, where $C$ is a constant independent of $i$. Then from
\begin{align*}
\left\{\begin{array}{ll}
\displaystyle-\frac{4(n-1)}{n-2}\Delta_{g_0} u_{i+1}+(R_{g_0}+N)u_{i+1}=F(x,u_i),&\hspace{2mm}\mbox{ in }M,\\
\displaystyle\frac{\partial u_{i+1}}{\partial \nu_0}+\frac{n-2}{2}(h_{g_0}+H)u_{i+1}=G(x,u_i),&\hspace{2mm}\mbox{ on }\partial M,
\end{array}
\right.
\end{align*}
we get that $\forall~ 1<q<\infty$, there holds $\|u_{i+1}\|_{W^{2,q}(M,g_0)}\leq C(q)$, where $C(q)$ is a constant independent of $i$ (regularity of solutions has been established in \cite{cherrier} or \cite{Araujo2}). By Sobolev embedding theorem, we get $\|u_i\|_{C^{1,\beta}(\bar M)}\leq C(\beta),~~\forall~ 0<\beta<1$. Also Schauder estimate gives $\|u_i\|_{C^{k,\alpha}(\bar M)}\leq C(k),~~\forall~ k \in \mathbb{N}$. From the monotonicity and uniform boundedness of $\{u_i\}$, for any fixed $x \in \bar M$, there exists some function $u(x)$ such that $u_i(x)\to u(x)$ as $i \to \infty$. Moreover, Ascoli-Arzela theorem shows that there exists a subsequence of $\{u_i\}$ converges in $C^{k,\alpha}(\bar M)$ to $u$. Therefore, we conclude that the whole sequence of $\{u_i\}$ converges in $C^{k,\alpha}(\bar M)$ to $u$. This finishes the proof.
\hfill{$\Box$}

\section{Flow equations and some elementary estimates}\label{Sect3}

From now on, we assume that $(M,g_0)$ is a $n$-dimensional smooth compact Riemannian manifold with boundary $\partial M$, where $n \geq 3$ and  $Q(M,\partial M)$ is a negative real number. For brevity, we denote by $L_{g_0}=-\frac{4(n-1)}{n-2}\Delta_{g_0}+R_{g_0}$ the conformal Laplacian and $B_{g_0}=\frac{\partial}{\partial \nu_0}+\frac{n-2}{2}h_{g_0}$ the boundary conformally covariant operator.

In terms of the conformal factor $u$, the prescribed scalar curvature plus mean curvature flows \eqref{mixed_cur_flows} can be written as
\begin{equation}\label{mixed_cur_flows_u}
\begin{split}
\left\{\begin{array}{ll}
\displaystyle\partial_t u=\frac{n-2}{4}(\alpha(t) f^{-1}R_g-\lambda(t))u &\hspace{2mm}\mbox{ in }M,\\
\displaystyle\partial_t u=\frac{n-2}{4}(\beta(t)h^{-1}h_g-\lambda(t))u&\hspace{2mm}\mbox{ on }\partial M,\\
u(0)=u_0 \in C^\infty(\bar M).
\end{array}
\right.
\end{split}
\end{equation}
In addition, we have
\begin{align}
L_{g_0}u=&R_g u^{\frac{n+2}{n-2}}  \hbox{~~in~~} M,\label{eq:sc}\\
B_{g_0}u=&\frac{n-2}{2}h_gu^{\frac{n}{n-2}} \hbox{~~on~~} \partial M,\label{eq:mc}
\end{align}
which implies that
\begin{equation*}
E[u(t)]=\int_M R_g d\mu_g+2(n-1)\int_{\partial M}h_g d\sigma_g.
\end{equation*}
It follows from \eqref{mixed_cur_flows_u}, \eqref{eq:sc} and \eqref{eq:mc} that
$u$ satisfies the following evolution equations:
\begin{align}\label{evolution_u}
\left\{\begin{array}{ll}
\displaystyle\partial_t \big(u^{\frac{n+2}{n-2}}\big)=\frac{n+2}{4}\Big(
-\frac{4(n-1)}{n-2}\alpha(t) f^{-1}\Delta_{g_0}u+\alpha(t) f^{-1}R_{g_0}u
-\lambda(t)u^{\frac{n+2}{n-2}}\Big),&\mbox{ in }M,\\
\displaystyle\partial_t \big(u^{\frac{n}{n-2}}\big)=\frac{n}{4}\Big(\frac{2}{n-2}\beta(t)h^{-1}\frac{\partial}{\partial \nu_0}u+\beta(t)h^{-1}h_{g_0}u-\lambda(t)u^{\frac{n}{n-2}}\Big), &\mbox{ on }\partial M,\\
\displaystyle u>0 \hbox{~~in~~} \overline M \hbox{~~and~~}u(0)=u_0.
\end{array}
\right.
\end{align}
The proof of the short time existence of the flows \eqref{mixed_cur_flows} is deferred to Section \ref{Sec3}.

A direct computation yields
\begin{align*}
&\frac{d}{dt}\Big[a\Big(\int_M -fd\mu_g\Big)^{\frac{n-2}{n}}+2(n-1)b\Big(\int_{\partial M}-hd\sigma_g\Big)^{\frac{n-2}{n-1}}\Big]\\
=&-\frac{n-2}{2}\Big[-a \Big(\int_M -f d\mu_g\Big)^{-\frac{2}{n}}\int_M f(\lambda-\alpha f^{-1}R_g)d\mu_g\\
&\qquad\qquad-2(n-1)b\Big(\int_{\partial M}-h d\sigma_g\Big)^{-\frac{1}{n-1}}\int_{\partial M}h(\lambda-\beta h^{-1}h_g)d\sigma_g\Big]\\
=&-\frac{n-2}{2}\Big\{\lambda\Big[a\Big(\int_M -fd\mu_g\Big)^{\frac{n-2}{n}}+2(n-1)b\Big(\int_{\partial M}-hd\sigma_g\Big)^{\frac{n-2}{n-1}}\Big]\\
&+a\alpha\Big(\int_M -f d\mu_g\Big)^{-\frac{2}{n}}\int_M R_g d\mu_g+2(n-1)b\beta\Big(\int_{\partial M}-h d\sigma_g\Big)^{-\frac{1}{n-1}}\int_{\partial M}h_g d\sigma_g\Big\}\\
=&-\frac{n-2}{2}\Big\{\lambda\Big[a\Big(\int_M -fd\mu_g\Big)^{\frac{n-2}{n}}+2(n-1)b\Big(\int_{\partial M}-hd\sigma_g\Big)^{\frac{n-2}{n-1}}\Big]+E[u(t)]\Big\}\\
=&0.
\end{align*}
From this, we may normalize the initial data such that
\begin{align}\label{conserved_quantity}
&a\Big(\int_M -fu(t)^{\frac{2n}{n-2}}d\mu_{g_0}\Big)^{\frac{n-2}{n}}+2(n-1)b \Big(\int_{\partial M}-hu(t)^{\frac{2(n-1)}{n-2}}d\sigma_{g_0}\Big)^{\frac{n-2}{n-1}}\no\\
=&a\big(\int_M -fu_0^{\frac{2n}{n-2}}d\mu_{g_0}\Big)^{\frac{n-2}{n}}+2(n-1)b \Big(\int_{\partial M}-hu_0^{\frac{2(n-1)}{n-2}}d\sigma_{g_0}\Big)^{\frac{n-2}{n-1}}=1
\end{align}
for all time $t \geq 0$.
We remark that the flow equations \eqref{mixed_cur_flows} can be regarded as a negative gradient flow for the energy functional $\q$ with constraint \eqref{conserved_quantity}, since along such curvature flows \eqref{mixed_cur_flows}, the energy $E[u(t)]$ is non-increasing, so is $\q$. Indeed, from \eqref{eqn:constrant_time_funcs},
 \eqref{eq:sc} and  \eqref{eq:mc}, we obtain
\begin{align}\label{energy_rate}
&\frac{d}{dt}E[u(t)]\no\\
=&2\Big[\frac{4(n-1)}{n-2}\int_M \langle \nabla u,\nabla u_t\rangle_{g_0}d\mu_{g_0}+\int_M R_{g_0}uu_t d\mu_{g_0}+2(n-1)\int_{\partial M} h_{g_0}uu_t d\sigma_{g_0}\Big]\no\\
=& 2\Big[\int_M u_t L_{g_0}(u)d\mu_{g_0}+\frac{4(n-1)}{n-2}\int_{\partial M}B_{g_0}(u) u_t d\sigma_{g_0}\Big]\no\\
=&\frac{n-2}{2}\Big[\int_M R_g(\alpha f^{-1}R_g-\lambda)d\mu_g+2(n-1)\int_{\partial M}h_g(\beta h^{-1}h_g-\lambda)d\sigma_g\Big]\no\\
=&-\frac{n-2}{2}\Big[-\alpha^{-1} \int_Mf(\lambda-\alpha f^{-1}R_g)^2d\mu_g-2(n-1)\beta^{-1}\int_{\partial M}h(\lambda-\beta h^{-1}h_g)^2d\sigma_g\Big]\no\\
&+\frac{n-2}{2}\lambda\Big\{\lambda\Big[\alpha^{-1}\int_M -fd\mu_g+2(n-1) \beta^{-1}\int_{\partial M}-hd\sigma_g\Big]+E[u(t)]\Big\}\no\\
=&-\frac{n-2}{2}\Big[\alpha^{-1} \int_M-f(\lambda-\alpha f^{-1}R_g)^2d\mu_g+2(n-1)\beta^{-1}\int_{\partial M}-h(\lambda-\beta h^{-1}h_g)^2d\sigma_g\Big]\no\\
\leq& 0
\end{align}
for all time $t \geq 0$. Integrating \eqref{energy_rate} over $(0,t)$, we show
\begin{align}\label{energy_gap}
&E[u(t)]-E[u(0)]\no\\
=&\frac{n-2}{2}\int_0^t \Big[\alpha^{-1} \int_Mf(\lambda-\alpha f^{-1}R_g)^2d\mu_g+2(n-1)\beta^{-1}\int_{\partial M}h(\lambda-\beta h^{-1}h_g)^2d\sigma_g\Big]d\tau,
\end{align}
whence
$$E[u(t)]\leq E[u_0].$$

Notice that $Q(M,\pa M)$ is finite and negative implies $Y(M,\pa M)<0$. Then we claim that when $Y(M,\partial M)<0$, there exists an initial metric $g(0)=u_0^{\frac{4}{n-2}}g_0$ such that
\begin{equation}\label{initial_metric}
R_{g_0}<0 \hbox{~~in~~} M \hbox{~~and~~} h_{g_0}<0 \hbox{~~on~~} \partial M,
\end{equation}
meanwhile
\begin{equation}\label{initial_value}
E[u_0]<0.
\end{equation}
To see \eqref{initial_metric} and \eqref{initial_value}, from \cite[Lemma 1.1]{escobar3} there exists a conformal metric $g_1=u_1^{\frac{4}{n-2}}g_0$ with $R_{g_1}<0$ and $h_{g_1}=0$. Recall that there exists a constant $C_1>0$ depending only on $M,g_1,n$ such that (cf. \cite[P. 9]{escobar1} or by a contradiction argument)
$$\int_M \phi^2 d\mu_{g_1}\leq C_1\left(\int_M |\nabla \phi|_{g_1}^2d\mu_{g_1}+\int_{\pa M}\phi^2d\sigma_{g_1}\right) $$
for any $\phi \in H^1(M,g_1)$. By choosing $0<\e_0<\min\{\frac{4(n-1)}{(n-2)C_1},\min_{\bar M}(-R_{g_1})\}$,
 let $\varphi$ be the positive smooth minimizer of 
 $$\lambda_1:=\min\left\{\int_M \left(\frac{4(n-1)}{n-2}|\nabla \phi|_{g_1}^2-\e_0\phi^2 \right)d\mu_{g_1};\phi \in H^1(M,g_1) \hbox{~~and~~}\int_{\pa M}\phi^2 d\sigma_{g_1}=1\right\}.$$
Let $\bar g_0=\varphi^{\frac{4}{n-2}}g_1$, it yields $R_{\bar g_0}=(\e_0+R_{g_1})\varphi^{-\frac{4}{n-2}}<0$ and $h_{\bar g_0}=\frac{\lambda_1}{2(n-1)}\varphi^{-\frac{2}{n-2}}<0$.
Thus we obtain
\begin{align*}
E[\varphi u_1]=\int_M R_{\bar g_0} d\mu_{\bar g_0}+2(n-1)\int_{\partial M}h_{\bar g_0}d\sigma_{\bar g_0}<0.
\end{align*}
Thus we may rescale $\bar g_0$ to satisfy \eqref{conserved_quantity}
and use this rescaled metric as the initial metric.

Along the flows \eqref{mixed_cur_flows}, we obtain the uniform bounds on the volumes of $M$ and $\partial M$ with respect to the flow metric, as well as the energy $E[u(t)]$.
\begin{lemma}\label{volume_bdd}
Along the flows \eqref{mixed_cur_flows} with the normalization \eqref{conserved_quantity}, there exist two positive constants $V_0$ and $S_0$ depending on $n,f,h, Y(M,\pa M), Q(M,\partial M),E[u_0]$ such that
\begin{equation}\label{est:volumes}
V_0^{-1}\leq \im u(t)^\criti \dmz \leq V_0 \quad\hbox{~~and~~} \quad S_0^{-1}\leq  \idm u(t)^\critb \dsz \leq S_0
\end{equation}
for all $t \geq 0$. Moreover, there also exists a positive constant $C_1$ depending on $V_0$ or $S_0$, such that
$$C_1\leq E[u(t)]=\q[u(t)]  \leq E[u_0]$$
for all $t \geq 0$.
\end{lemma}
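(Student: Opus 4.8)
The plan is to exploit the conserved quantity \eqref{conserved_quantity} together with the monotonicity \eqref{energy_rate} of the energy, and then to convert these into two-sided bounds on the volumes by invoking the definitions of the conformal invariants $Y(M,\partial M)$ and $Q(M,\partial M)$. First I would establish the upper bounds in \eqref{est:volumes}. Since each of the two summands in \eqref{conserved_quantity} is nonnegative and their sum is identically $1$, we immediately get
$$a\Big(\int_M -f u(t)^{\frac{2n}{n-2}}d\mu_{g_0}\Big)^{\frac{n-2}{n}}\leq 1 \quad\hbox{and}\quad 2(n-1)b\Big(\int_{\partial M}-h u(t)^{\frac{2(n-1)}{n-2}}d\sigma_{g_0}\Big)^{\frac{n-2}{n-1}}\leq 1,$$
and since $f<0$, $h<0$ are smooth on the compact $\bar M$ they are bounded away from $0$; dividing by $\min_M(-f)$ and $\min_{\partial M}(-h)$ respectively yields the upper bounds $\int_M u(t)^{2n/(n-2)}d\mu_{g_0}\leq V_0$ and $\int_{\partial M}u(t)^{2(n-1)/(n-2)}d\sigma_{g_0}\leq S_0$ for constants depending only on $n,f,h,a,b$.

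Next I would obtain the lower bounds. Here the negativity of the conformal invariants is essential. From \eqref{eq:sc}–\eqref{eq:mc} we have $E[u(t)]=\int_M R_g\,d\mu_g+2(n-1)\int_{\partial M}h_g\,d\sigma_g$. By the variational characterization \eqref{Yamabe_constant} of $Y(M,\partial M)$,
$$E[u(t)]\geq Y(M,\partial M)\Big(\int_M u(t)^{\frac{2n}{n-2}}d\mu_{g_0}\Big)^{\frac{n-2}{n}},$$
and similarly, by the definition of $Q(M,\partial M)$,
$$E[u(t)]\geq Q(M,\partial M)\Big(\int_{\partial M} u(t)^{\frac{2(n-1)}{n-2}}d\sigma_{g_0}\Big)^{\frac{n-2}{n-1}}.$$
Since $Y(M,\partial M)<0$ and $Q(M,\partial M)<0$, and since the volume factors are already bounded above, the right-hand sides of both displays are bounded below by a constant depending on $Y$, $Q$ and $V_0,S_0$; in particular $E[u(t)]\geq C_1$ for some $C_1\in\mathbb{R}$ (which will be negative). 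Combined with $E[u(t)]\leq E[u_0]$ from \eqref{energy_rate}, this gives the claimed sandwich $C_1\leq E[u(t)]=\q[u(t)]\leq E[u_0]$; the equality $E[u(t)]=\q[u(t)]$ is just the constraint \eqref{conserved_quantity} plugged into the denominator of $\q$.

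Finally, to get the lower volume bounds, I would run the two estimates of the previous paragraph in the reverse direction: rearranging
$$Y(M,\partial M)\Big(\int_M u(t)^{\frac{2n}{n-2}}d\mu_{g_0}\Big)^{\frac{n-2}{n}}\leq E[u(t)]\leq E[u_0]<0$$
and dividing by $Y(M,\partial M)<0$ (which reverses the inequality) yields a positive lower bound $\big(\int_M u(t)^{2n/(n-2)}d\mu_{g_0}\big)^{(n-2)/n}\geq E[u_0]/Y(M,\partial M)>0$, hence $\int_M u(t)^{2n/(n-2)}d\mu_{g_0}\geq V_0^{-1}$ after adjusting the constant; the same argument with $Q(M,\partial M)$ in place of $Y(M,\partial M)$ gives $\int_{\partial M}u(t)^{2(n-1)/(n-2)}d\sigma_{g_0}\geq S_0^{-1}$. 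The one point that requires care — and the only genuine obstacle — is the sign bookkeeping: one needs $E[u_0]<0$, which is exactly assertion \eqref{initial_value} guaranteed by the choice of initial metric, so that the chains of inequalities point the right way when divided by the negative invariants; everything else is direct substitution and compactness of $\bar M$.
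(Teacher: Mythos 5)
Your proposal is correct and follows essentially the same route as the paper: upper volume bounds from the conserved quantity \eqref{conserved_quantity}, lower volume bounds by combining the definitions of $Y(M,\partial M)$ and $Q(M,\partial M)$ with $E[u(t)]\leq E[u_0]<0$ and dividing by the negative invariants, and the energy sandwich by feeding the volume bounds back in. Your remark that the constant $C_1$ so obtained is actually negative (and that the constants also depend on $a,b$) is a fair, harmless refinement of the paper's statement.
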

\begin{proof}
From the definitions of $Y(M,\partial M)$ and $Q(M,\partial M)$, we have
\begin{align*}
Y(M,\partial M)\Big(\int_M d\mu_g\Big)^{\frac{n-2}{n}}\leq E[u(t)]\leq E[u_0],\\
Q(M,\partial M)\Big(\int_{\partial M} d\sigma_g\Big)^{\frac{n-2}{n-1}}\leq E[u(t)]\leq E[u_0],
\end{align*}
which implies that
\begin{align*}
 \left(\int_M d\mu_g\right)^{\frac{n-2}{n}}\geq\frac{E[u_0]}{Y(M,\partial M)} \quad\mbox{ and }\quad
\Big(\int_{\partial M} d\sigma_g\Big)^{\frac{n-2}{n-1}}\geq \frac{E[u_0]}{Q(M,\partial M)},
\end{align*}
Notice that the lower bounds are positive
in view of \eqref{initial_value}. On the other hand,
\eqref{conserved_quantity} gives the required upper bounds. Thus the desired estimate \eqref{est:volumes} for volumes follows from the above estimates. Using \eqref{est:volumes} together with the above estimates, we obtain
$$Y(M,\partial M)V_0^{\frac{n-2}{n}}\leq E[u(t)]\leq E[u_0]$$
or
$$Q(M,\partial M)S_0^{\frac{n-2}{n-1}}\leq E[u(t)]\leq E[u_0],$$
which implies the second assertion.
\end{proof}

As a direct consequence of Lemma \ref{volume_bdd},
we can deduce from \eqref{eqn:constrant_time_funcs}, \eqref{conserved_quantity} that there exist positive uniform constants $\alpha_i$, $\beta_i$ and $\lambda_i, i=1,2$ such that
\begin{equation}\label{alpha_beta_lambda_bdd}
\alpha_1\leq \alpha(t)\leq\alpha_2\hspace{2mm}\mbox{ and }\hspace{2mm}\beta_1\leq \beta(t)\leq\beta_2\hspace{2mm}\mbox{ and }\hspace{2mm}\lambda_1=\lambda(0)\leq \lambda(t)\leq\lambda_2
\end{equation}
for all $t\geq 0$.

\section{Short time existence}\label{Sec3}\label{Sect4}

The proof of the short time existence mainly follows the strategy of Brendle \cite{Brendle2}. We first have the following estimates for linear parabolic equations with constant coefficients.
\begin{lemma}\label{lem2.1}
The linear partial differential equation
$$\displaystyle\frac{4}{n-2}\partial_tu(x',x_n,t)-\frac{4(n-1)}{n-2}\sum_{i=1}^n\frac{\partial^2}{\partial x_i^2}u(x',x_n,t)=F(x',x_n,t)$$
for $x_n\geq 0$ with the boundary condition
$$\displaystyle\frac{4}{n-2}\partial_tu(x',0,t)-\frac{2}{n-2}\frac{\partial}{\partial x_n}u(x',0,t)=G(x',t)$$
for $t\geq 0$ and the initial condition
$$u(x',x_n,0)=0$$
for $x_n\geq 0$ has a unique solution. The solution satisfies the estimates
\begin{equation*}
\iint\left|\frac{\partial}{\partial x_i}u(x',0,t)\right|^pdx'dt\leq C \iiint|F(x',x_n,t)|^pdx'dx_ndt
+C\iint|G(x',t)|^pdx'dt
\end{equation*}
and
\begin{equation*}
\iint\left|\partial_tu(x',0,t)\right|^pdx'dt\leq C \iiint|F(x',x_n,t)|^pdx'dx_ndt
+C\iint|G(x',t)|^pdx'dt.
\end{equation*}
\end{lemma}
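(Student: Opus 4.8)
The plan is to solve the problem explicitly by Fourier transform in the tangential variables $x'\in\mathbb{R}^{n-1}$ (dual variable $\xi$) and Laplace transform in $t$ (dual variable $s$, $\mathrm{Re}\,s>0$), and then to recognize the boundary traces of $u$, of $\partial_i u$ and of $\partial_t u$ as Fourier--Laplace multiplier operators acting on $F$ and $G$, whose $L^p$-boundedness for $1<p<\infty$ follows from Calder\'on--Zygmund theory adapted to the parabolic dilation structure. Uniqueness comes for free: after transforming, the interior equation becomes a linear second-order ODE in $x_n$ whose only bounded solution, together with the transformed boundary condition, is uniquely determined because the relevant boundary symbol never vanishes for $\mathrm{Re}\,s>0$.

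First I would rescale, writing the interior equation as $\partial_t u-(n-1)\Delta u=\tfrac{n-2}{4}F$ and the boundary condition as $\partial_t u(\cdot,0,t)-\tfrac12\partial_{x_n}u(\cdot,0,t)=\tfrac{n-2}{4}G$. After transforming, the interior equation becomes $\partial_{x_n}^2\hat u-\omega^2\hat u=-\tfrac{n-2}{4(n-1)}\hat F$ on $x_n>0$, with $\omega=\omega(\xi,s)=\bigl(\tfrac{s}{n-1}+|\xi|^2\bigr)^{1/2}$, $\mathrm{Re}\,\omega>0$; imposing boundedness as $x_n\to\infty$ and the transformed boundary condition $s\hat u(\xi,0,s)-\tfrac12\partial_{x_n}\hat u(\xi,0,s)=\tfrac{n-2}{4}\hat G$, and solving for the one free constant, yields after a short computation with a fortuitous cancellation
\[
\hat u(\xi,0,s)=\frac{\tfrac{n-2}{4}\,\hat G(\xi,s)}{\,s+\tfrac{\omega}{2}\,}+\frac{\tfrac{n-2}{4}}{2(n-1)\bigl(s+\tfrac{\omega}{2}\bigr)}\int_0^\infty e^{-\omega y}\,\hat F(\xi,y,s)\,dy ,
\]
so that $\widehat{\partial_t u}(\xi,0,s)=s\,\hat u(\xi,0,s)$ and $\widehat{\partial_i u}(\xi,0,s)=\mathrm i\xi_i\,\hat u(\xi,0,s)$. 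The algebraic fact to record is that on $\{\mathrm{Re}\,s\ge0\}$ one has $\mathrm{Re}(\omega^2)=|\xi|^2+\tfrac{\mathrm{Re}\,s}{n-1}\ge0$, which forces $\mathrm{Re}\,\omega\ge|\mathrm{Im}\,\omega|$, hence $\mathrm{Re}\,\omega\ge\tfrac1{\sqrt2}|\omega|$ and $\bigl|s+\tfrac{\omega}{2}\bigr|\ge\max\bigl\{\tfrac1{2\sqrt2}|\omega|,\ |\mathrm{Im}\,s|\bigr\}$; in particular $s+\tfrac{\omega}{2}\ne0$ off the origin (the Lopatinskii--Shapiro condition for this dynamic boundary operator). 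From these, on the imaginary axis $s=\mathrm i\tau$ one gets $\bigl|\tfrac{s}{s+\omega/2}\bigr|\le1$ and $\bigl|\tfrac{\xi_i}{s+\omega/2}\bigr|\le2\sqrt2$, which give the $L^2(\mathbb{R}^{n-1}\times\mathbb{R})$ bounds by Plancherel.

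To upgrade to $L^p$, $1<p<\infty$: for the $G$-contribution the symbols $\tfrac{s}{s+\omega/2}$ and $\tfrac{\xi_i}{s+\omega/2}$ are causal (boundary values of functions holomorphic and bounded in $\mathrm{Re}\,s>0$) and satisfy Mikhlin--H\"ormander derivative estimates with respect to the parabolic scaling $(\xi,\tau)\mapsto(r\xi,r^2\tau)$, so their kernels are Calder\'on--Zygmund kernels on the space of homogeneous type $\bigl(\mathbb{R}^{n-1}\times\mathbb{R},\ \varrho,\ dx'\,dt\bigr)$ with $\varrho\bigl((x',t),(y',\tau)\bigr)=|x'-y'|+|t-\tau|^{1/2}$; the Calder\'on--Zygmund theorem (equivalently the parabolic Mikhlin multiplier theorem) then yields the claimed bounds in terms of $\|G\|_{L^p}$. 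For the $F$-contribution there is the extra integration $\int_0^\infty e^{-\omega y}(\cdots)\,dy$; here I would either (a) note that $\int_0^\infty e^{-\omega y}\hat F(\xi,y,s)\,dy=\omega\,\widehat W(\xi,0,s)$, where $W$ solves the auxiliary half-space heat problem with forcing $\tfrac{n-2}{4}F$, homogeneous Neumann condition $\partial_{x_n}W|_{x_n=0}=0$ and $W|_{t=0}=0$, and use the boundary relation $\partial_t u|_{x_n=0}=\tfrac12\partial_{x_n}u|_{x_n=0}+\tfrac{n-2}{4}G$ to trade the trace of $\partial_t u$ for the trace of $\partial_{x_n}u$, an order-zero (parabolic Neumann-to-Dirichlet-type) operator applied to $F$; or (b) estimate the $(n+2)$-parameter kernel $\mathcal K(x',y,t)$ directly, exploiting its parabolic homogeneity together with the Gaussian decay supplied by $e^{-\omega y}$ (the slice $y=\mathrm{const}$ of the $e^{-\omega y}$-kernel is, up to constants, the heat first-passage density, whose $L^1$-mass is uniformly bounded) and then applying Minkowski's integral inequality in $y$. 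Either route gives $\|\partial_i u(\cdot,0,\cdot)\|_{L^p}+\|\partial_t u(\cdot,0,\cdot)\|_{L^p}\lesssim\|F\|_{L^p(\{x_n\ge0\}\times\mathbb{R})}+\|G\|_{L^p(\mathbb{R}^{n-1}\times\mathbb{R})}$, which is the assertion.

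The main obstacle is the dynamic (Wentzell-type) boundary condition, which involves $\partial_t u$ restricted to the boundary: there is no maximum principle available and everything must go through $L^p$ singular-integral estimates, and, more to the point, the boundary symbol $s+\tfrac{\omega}{2}$ is not homogeneous under parabolic dilations — it mixes the degree-two quantity $s$ with the degree-one quantity $\omega$ — so no homogeneous multiplier theorem applies directly. The Mikhlin/Calder\'on--Zygmund bounds have to be checked by hand, splitting into the parabolic-high-frequency regime $|s|\gtrsim|\xi|^2$ (where $s+\tfrac{\omega}{2}\simeq s$) and the parabolic-low-frequency regime $|s|\lesssim|\xi|^2$ (where $s+\tfrac{\omega}{2}\simeq\omega$), using repeatedly that $\mathrm{Re}\,\omega\gtrsim|\omega|$ on $\{\mathrm{Re}\,s\ge0\}$. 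The second genuine point of care is the $x_n$-integration in the $F$-term: the kernel is not jointly integrable in $(x',t,y)$, so one must use its singular-integral structure rather than a naive $L^1$-convolution bound.
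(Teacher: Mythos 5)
First, on comparison: the paper does not prove Lemma \ref{lem2.1} at all --- immediately after Lemma \ref{lem2.2} it refers to \cite[Lemmas 2.1 and 2.2]{Brendle2} --- so your transform computation is supplying an argument the paper outsources, and it is the same kind of argument as in the cited source (explicit solution of the constant-coefficient half-space model, then $L^p$ bounds for the induced boundary operators). Your algebra is correct: solving $\partial_{x_n}^2\hat u-\omega^2\hat u=-\tfrac{n-2}{4(n-1)}\hat F$ with the decaying exponential and the transformed dynamic boundary condition gives exactly your formula for $\hat u(\xi,0,s)$ (the cancellation is genuine), and the bounds $\mathrm{Re}\,\omega\ge|\omega|/\sqrt2$ and $|s+\tfrac{\omega}{2}|\ge\max\{|\omega|/(2\sqrt2),\,|\mathrm{Im}\,s|\}$ on $\mathrm{Re}\,s\ge0$ are right, though for the second you should record that $\mathrm{Im}\,\omega$ and $\mathrm{Im}\,s$ have the same sign (so the imaginary parts cannot cancel); the $L^2$ statements then follow by Plancherel. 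In route (a) the identity should carry a constant, $\int_0^\infty e^{-\omega y}\hat F\,dy=\tfrac{4(n-1)}{n-2}\,\omega\,\widehat W(\xi,0,s)$ for your normalization of $W$ --- harmless, but fix it. Uniqueness via nonvanishing of $s+\tfrac{\omega}{2}$ also needs you to name a solution class in which the Laplace transform argument is legitimate.

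The genuine gap is in the step that carries all the weight, the $L^p$ bound for $1<p<\infty$. Your assertion that $\tfrac{s}{s+\omega/2}$ and $\tfrac{\xi_i}{s+\omega/2}$ satisfy Mikhlin--H\"ormander estimates for the parabolic scaling is false: at $s=i\tau$, $\tau=0$ one computes $\partial_\tau\big(\tfrac{s}{s+\omega/2}\big)=2i/|\xi|$, whereas the parabolic Mikhlin condition demands a bound of order $|\xi|^{-2}$, so the condition fails for large $|\xi|$ --- precisely because the symbol mixes the degree-two quantity $s$ with the degree-one quantity $\omega$, as you note yourself; hence ``their kernels are Calder\'on--Zygmund kernels'' is unjustified as stated. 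Moreover your proposed dichotomy is misplaced: the relevant comparison is $|s|$ versus $|\omega|$, so $s+\tfrac{\omega}{2}\simeq s$ when $|\tau|\gtrsim|\xi|+1$ and $\simeq\omega$ when $|\tau|\lesssim|\xi|+1$, not $|s|$ versus $|\xi|^2$ (at $\xi=0$, $\tau\to0$ one has $|\omega|\sim|\tau|^{1/2}\gg|s|$ although $|s|\ge|\xi|^2$). Consequently ``check the Mikhlin bounds by hand'' cannot close the argument as written; you need a tool adapted to such inhomogeneous symbols: for instance the anisotropic Marcinkiewicz product-multiplier theorem (the bounds $|\tau\partial_\tau m|+|\xi_j\partial_{\xi_j}m|\lesssim1$ and their mixed analogues do hold here), or $H^\infty$-calculus/operator-sum methods as in the literature on parabolic problems with dynamic (Wentzell/relaxation-type) boundary conditions, or direct kernel estimates exploiting that the symbols are bounded and holomorphic in $\mathrm{Re}\,s>0$. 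The same caveat applies to the $F$-term in both of your routes (a) and (b). With such a multiplier theorem substituted at that point, the rest of your outline goes through.
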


\begin{lemma}\label{lem2.2}
Let $u$ be the solution of the linear partial differential equation
$$\displaystyle\frac{4}{n-2}\partial_tu(x',x_n,t)-\frac{4(n-1)}{n-2}\sum_{i=1}^n\frac{\partial^2}{\partial x_i^2}u(x',x_n,t)=F(x',x_n,t)$$
for $x_n\geq 0$ with the boundary condition
$$\displaystyle\frac{4}{n-2}\partial_tu(x',0,t)-\frac{2}{n-2}\frac{\partial}{\partial x_n}u(x',0,t)=G(x',t)$$
for $t\geq 0$ and the initial condition
$$u(x',x_n,0)=0$$
for $x_n\geq 0$. Then we have
\begin{equation*}
\begin{split}
\int_0^T\iint\left|\frac{\partial}{\partial x_i}u(x',x_n,t)\right|^pdx'dx_ndt&\leq C T^{\frac{1}{2}}\iint|F(x',x_n,t)|^pdx'dx_ndt\\
&\hspace{4mm}
+CT^{\frac{1}{2}}\int_0^T\int|G(x',t)|^pdx'dt
\end{split}
\end{equation*}
and
\begin{equation*}
\begin{split}
\int_0^T\iint\left|\partial_tu(x',x_n,t)\right|^pdx'dx_ndt&\leq C \int_0^T\iint|F(x',x_n,t)|^pdx'dx_ndt\\
&\hspace{4mm}
+C\int_0^T\int|G(x',t)|^pdx'dt.
\end{split}
\end{equation*}
\end{lemma}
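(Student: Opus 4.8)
The plan is to deduce both estimates from an explicit integral representation of the solution, in the same spirit as the proof of Lemma~\ref{lem2.1}. Since the coefficients are constant, the model problem on the half-space $\{x_n\ge 0\}$ has a fundamental solution consisting of an interior kernel $\Gamma(x,y,t-s)$ acting on the forcing $F$ and a boundary kernel $\Phi(x,y',t-s)$ acting on the datum $G$, obtained (by a partial Fourier transform in $x'$, a Laplace transform in $t$, or a suitable reflection) so that $\Gamma$, $\Phi$ and their first derivatives obey the Gaussian bounds and homogeneities of the corresponding objects for the ordinary heat equation; this is precisely the information already exploited in Lemma~\ref{lem2.1}. Write $u=u_F+u_G$, where $u_F(x,t)=\int_0^t\!\int_{\{y_n\ge 0\}}\Gamma(x,y,t-s)F(y,s)\,dy\,ds$ and $u_G(x,t)=\int_0^t\!\int_{\mathbb{R}^{n-1}}\Phi(x,y',t-s)G(y',s)\,dy'\,ds$.

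For the gradient estimate, the mechanism is that each spatial derivative costs a factor $\tau^{-1/2}$ in the time-lag $\tau=t-s$, which is integrable on a finite interval. Since $\|\nabla_x\Gamma(\cdot,y,\tau)\|_{L^1(dy)}\le C\tau^{-1/2}$, Minkowski's inequality gives $\|\nabla u_F(\cdot,t)\|_{L^p_x}\le C\int_0^t(t-s)^{-1/2}\|F(\cdot,s)\|_{L^p_x}\,ds$, and Young's convolution inequality in $t$ on $[0,T]$ against $\tau^{-1/2}\mathbf{1}_{[0,T]}\in L^1$ (of norm $2\sqrt T$) bounds $\|\nabla u_F\|_{L^p([0,T]\times\{x_n\ge0\})}$ by $C\sqrt T\,\|F\|_{L^p}$, i.e.\ its $p$-th power by $CT^{p/2}\iint|F|^p$. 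For $u_G$ one integrates $|\nabla_x\Phi|$ first in the $n-1$ tangential variables and then in $x_n$ to see that its $L^p_x$-norm yields a time-kernel of size $C\tau^{-1+\frac1{2p}}$, still integrable on $[0,T]$ with integral $CT^{\frac1{2p}}$; Young's inequality in $t$ then bounds $\|\nabla u_G\|_{L^p([0,T]\times\{x_n\ge0\})}$ by $CT^{\frac1{2p}}\|G\|_{L^p}$, whose $p$-th power is $CT^{1/2}\int_0^T\!\int|G|^p$. Summing the two contributions (and absorbing $T^{p/2}$ into $T^{1/2}$ for $T\le1$) gives the first estimate; note that the borderline exponent $\tfrac12$ is forced by the boundary term $u_G$.

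For the time-derivative estimate no power of $T$ is needed. Writing $\partial_t u_F=cF+(\text{convolution against }\partial_t\Gamma)$, the correction term is a parabolic Calder\'on--Zygmund operator, bounded on $L^p$ of space-time by the classical maximal-regularity estimate for the heat equation, here transplanted to the half-space with the dynamic boundary condition via the same kernel used for Lemma~\ref{lem2.1}; likewise $\partial_t u_G$ is a genuine singular integral on the half-space whose $L^p$-boundedness is established exactly as, and may be read off from, the boundary estimate of Lemma~\ref{lem2.1}. Adding these gives the second estimate.

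The main obstacle is this last step. The gradient bounds are soft: they follow from Minkowski's inequality plus Young's inequality against an \emph{integrable} kernel, which is also why they automatically carry a decaying power of $T$. By contrast, controlling $\partial_t u$ in the interior---above all the operator $G\mapsto\partial_t u$ on $\{x_n>0\}$---is a bona fide Calder\'on--Zygmund estimate for a parabolic problem with an oblique (dynamic) boundary condition, and has to be extracted from the detailed kernel analysis behind Lemma~\ref{lem2.1} rather than from elementary convolution inequalities; this is where the real work lies.
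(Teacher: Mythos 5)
First, note that the paper does not prove this lemma at all: both Lemma \ref{lem2.1} and Lemma \ref{lem2.2} are quoted verbatim from Brendle's work on curvature flows on surfaces with boundary, where the relevant solution kernels for the half-space problem with this \emph{dynamic} boundary condition are constructed and estimated explicitly. So the real content of any proof is exactly the kernel analysis, and this is where your proposal has a genuine gap. You decompose $u=u_F+u_G$ and then \emph{assert} that the interior kernel $\Gamma$ and the boundary kernel $\Phi$ ``obey the Gaussian bounds and homogeneities of the corresponding objects for the ordinary heat equation,'' claiming this is ``the information already exploited in Lemma \ref{lem2.1}.'' It is not: Lemma \ref{lem2.1} (as stated and as available in this paper) only gives $L^p$ bounds on the boundary traces of $\partial_{x_i}u$ and $\partial_t u$, not pointwise kernel bounds, and the boundary condition here is not Neumann but of Wentzell type, $\tfrac{4}{n-2}\partial_t u-\tfrac{2}{n-2}\partial_{x_n}u=G$ on $\{x_n=0\}$. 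Passing to Fourier variables $(\xi,s)$ in $(x',t)$, the boundary-to-interior multiplier is of the form $c/(4s+2\sqrt{s/(n-1)+|\xi|^2})$, which is \emph{not} parabolically homogeneous (unlike the Neumann multiplier $c/\sqrt{s/(n-1)+|\xi|^2}$), so the Gaussian/single-layer-potential bounds you use to get the time kernels $\tau^{-1/2}$ and $\tau^{-1+\frac{1}{2p}}$ cannot simply be ``read off'' from the heat equation; they must be established, and that is the heart of the matter. Granting those bounds, your Minkowski--Young computation for the gradient term is fine in outline (modulo the fact that the $F$-contribution comes out as $T^{p/2}$ rather than $T^{1/2}$, which you only reconcile for $T\le 1$).

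The second estimate is where the gap is most serious by your own admission. Saying that $\partial_t u$ is controlled by ``a parabolic Calder\'on--Zygmund operator \ldots transplanted to the half-space with the dynamic boundary condition via the same kernel used for Lemma \ref{lem2.1}'' and that the boundary piece ``may be read off'' from Lemma \ref{lem2.1} is not an argument: Lemma \ref{lem2.1} controls $\partial_t u(x',0,t)$ in $L^p(dx'\,dt)$, whereas you need $\partial_t u$ in $L^p$ of the \emph{interior} space-time region, and no mechanism is given for converting one into the other. There is in fact a natural way to make your sketch rigorous, and it is presumably the intended route: use Lemma \ref{lem2.1} to bound $\partial_t u(\cdot,0,\cdot)$ in $L^p$, then use the boundary condition itself to convert this into an $L^p$ bound on the Neumann datum $\partial_{x_n}u(\cdot,0,\cdot)$, and finally treat $u$ as a solution of the inhomogeneous heat equation in the half-space with prescribed Neumann data, to which standard maximal $L^p$ regularity (genuine Calder\'on--Zygmund theory for the reflected heat kernel, with the $T^{1/2}$ gain for first derivatives coming from the integrable $\tau^{-1/2}$ kernel) applies. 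As written, your proposal neither performs this reduction nor supplies the kernel estimates for the dynamic boundary condition that would replace it, so the proof is incomplete precisely at the step you yourself identify as ``where the real work lies.''
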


For the proof, see \cite[Lemmas 2.1 and 2.2]{Brendle2}.
Using Lemmas \ref{lem2.1} and \ref{lem2.2}, we can prove the following lemma through the freezing coefficients method and a covering argument,
which reduce the case of variable coefficients to the case of
constant coefficients.

\begin{lemma}\label{lem2.3}
Let $v$ be a H\"older continuous function in $\overline M \times [0,T]$, which is bounded below by a positive constant,
$f$ and $h$ be smooth negative functions defined in $M$ and on $\partial M$ respectively.
If $\alpha(v)$ and $\beta(v)$ are constants depending on $v$ which are defined as
$$\alpha(v)=\frac{1}{a}\Big(\int_M -fv^{\frac{2n}{n-2}}d\mu_{g_0}\Big)^{\frac{2}{n}}, \quad \beta(v)=\frac{1}{b}\Big(\int_{\partial M}-hv^{\frac{2(n-1)}{n-2}}d\sigma_{g_0}\Big)^{\frac{1}{n-1}},$$
then the partial differential equation
\begin{equation*}
\frac{4}{n-2}\alpha(v)^{-1}fv^{\frac{4}{n-2}}\partial_t u+\frac{4(n-1)}{n-2}\Delta_{g_0}u=\alpha(v)^{-1}f F
\end{equation*}
in $M$ with boundary condition
\begin{equation*}
\frac{4}{n-2}\beta(v)^{-1}hv^{\frac{2}{n-2}}\partial_t u-\frac{2}{n-2}\frac{\partial u}{\partial \nu_0}=\beta(v)^{-1}h G
\end{equation*}
 on $\partial M$ and the initial condition
$$u(0)=u_0$$
has a unique solution on some small interval $0\leq t\leq T$.
The solution satisfies the estimates
\begin{equation*}
\begin{split}
\int_0^T\int_M\left|\partial_tu\right|^pd\mu_{g_0}dt
\leq C\int_0^T\int_M|F|^pd\mu_{g_0}dt+C\int_0^T\int_{\partial M}|G|^pd\sigma_{g_0}dt
\end{split}
\end{equation*}
and
\begin{equation*}
\begin{split}
\int_0^T\int_M\left|\frac{\partial}{\partial x_i}u\right|^pd\mu_{g_0}dt
\leq CT^{\frac{1}{2}}\int_0^T\int_M|F|^pd\mu_{g_0}dt+CT^{\frac{1}{2}}\int_0^T\int_{\partial M}|G|^pd\sigma_{g_0}dt.
\end{split}
\end{equation*}
Furthermore, we have
\begin{equation*}
\begin{split}
\int_0^T\int_{\partial M}\left|\partial_tu\right|^pd\sigma_{g_0}dt
\leq C\int_0^T\int_M|F|^pd\mu_{g_0}dt+C\int_0^T\int_{\partial M}|G|^pd\sigma_{g_0}dt
\end{split}
\end{equation*}
and
\begin{equation*}
\begin{split}
\int_0^T\int_{\partial M}\left|\frac{\partial}{\partial x_i}u\right|^pd\sigma_{g_0}dt
\leq CT^{\frac{1}{2}}\int_0^T\int_M|F|^pd\mu_{g_0}dt+CT^{\frac{1}{2}}\int_0^T\int_{\partial M}|G|^pd\sigma_{g_0}dt.
\end{split}
\end{equation*}
\end{lemma}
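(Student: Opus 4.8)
The plan is to derive the variable-coefficient estimates from the constant-coefficient ones in Lemmas \ref{lem2.1}--\ref{lem2.2} by the standard freezing-coefficients technique combined with a finite covering of $\overline{M}$ by boundary and interior coordinate charts. First I would address existence and uniqueness: since $v$ is H\"older continuous and bounded below, the coefficients $\alpha(v)^{-1}fv^{4/(n-2)}$ and $\beta(v)^{-1}hv^{2/(n-2)}$ are H\"older continuous and bounded away from zero (here the negativity of $f$ and $h$ guarantees the equation is genuinely parabolic with the correct sign, after multiplying through by $\alpha(v)^{-1}f<0$, etc.), so this is a linear uniformly parabolic oblique-derivative problem with H\"older coefficients; existence and uniqueness on a short interval $[0,T]$ follow from classical linear parabolic theory (e.g. Ladyzhenskaya--Solonnikov--Ural'tseva), and one may reduce to zero initial data by subtracting off an extension of $u_0$ solving the homogeneous equation, absorbing the remainder into $F$ and $G$.

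Next come the quantitative estimates. I would fix a finite atlas: interior balls where the boundary condition is absent and the equation, after dividing by the coefficient of $\partial_t u$ and freezing that coefficient at the center of the ball, becomes the constant-coefficient heat-type equation of Lemmas \ref{lem2.1}--\ref{lem2.2} up to a lower-order perturbation and a perturbation of the principal part; and boundary half-balls where, after a coordinate change flattening $\partial M$ and rescaling so the leading coefficients match $\tfrac{4}{n-2}\partial_t-\tfrac{4(n-1)}{n-2}\sum\partial_{x_i}^2$ in the interior and $\tfrac{4}{n-2}\partial_t-\tfrac{2}{n-2}\partial_{x_n}$ on the boundary, the same model problems appear. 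In each chart the difference between the frozen operator and the true operator has principal coefficients of size $O(\mathrm{osc})$, which is small on a sufficiently small ball by the H\"older continuity of the coefficients (hence of $v$); the corresponding error terms on the right-hand side are controlled by $\|\partial_t u\|_{L^p}$ and $\|\nabla u\|_{L^p}$ with a small constant, and can therefore be absorbed into the left-hand side after summing over the finite cover. The $T^{1/2}$ gain in the gradient estimates of Lemma \ref{lem2.2} is what allows the $\nabla u$ error terms to be absorbed as well once $T$ is taken small. Patching the local estimates with a partition of unity subordinate to the cover, and using the trace form of Lemma \ref{lem2.1} to pass from interior bounds to boundary bounds on $\partial M$, yields the four displayed global estimates.

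The main obstacle is the careful bookkeeping in the absorption step: one must check that all perturbation terms — from freezing the $\partial_t u$-coefficient, from the non-constant principal part of $\Delta_{g_0}$ in local coordinates, from the lower-order terms and from the commutators produced by the partition of unity — come either with a factor that is small by shrinking the radius of the cover (via oscillation of the H\"older coefficients) or with a factor $T^{1/2}$ small by shrinking the time interval, so that after a finite sum over charts they can be moved to the left-hand side without destroying the constant. A secondary technical point is that the cover and the smallness of $T$ must be chosen depending only on the H\"older norm and positive lower bound of $v$ (equivalently, on fixed data), not on the solution $u$ itself, so that the estimates are genuinely a priori; this is what makes the lemma usable in the contraction-mapping argument for short-time existence of the flow in the next section.
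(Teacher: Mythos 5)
Your proposal is correct and follows essentially the same route as the paper: the authors do not write out a detailed argument but state explicitly that Lemma \ref{lem2.3} is obtained from the constant-coefficient estimates of Lemmas \ref{lem2.1} and \ref{lem2.2} (taken from Brendle's work) via the freezing-coefficients method and a covering argument, which is precisely the scheme you flesh out, including the absorption of the oscillation and $T^{1/2}$ error terms. Your additional remarks on existence/uniqueness from classical linear parabolic theory and on keeping the estimates a priori (depending only on the H\"older data of $v$) are consistent with the paper's intended use of the lemma in the contraction-mapping argument of Theorem \ref{thm:short time}.
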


\begin{lemma}\label{lem2.4}
Let $u$ be the solution of the partial differential equation
$$\frac{4}{n-2}v^{\frac{4}{n-2}}\partial_t u+\frac{4(n-1)}{n-2}\alpha(v) f^{-1}\Delta_{g_0}u=F$$
in $M$ with boundary condition
$$\frac{4}{n-2}v^{\frac{2}{n-2}}\partial_t u-\frac{2}{n-2}\beta(v)h^{-1}\frac{\partial u}{\partial \nu_0}=G$$
 on $\partial M$. If
\begin{equation}\label{L_p_assumption}
\begin{split}
&D^mF\in L^p(M\times [\delta,T]),\\
&D^mG\in L^p(\partial M\times [\delta,T]),\\
&D^mu\in L^p(M\times [\delta,T])\cap L^p(\partial M\times [\delta,T]),\\
&D^m v\in L^p(M\times [\delta,T])\cap L^p(\partial M\times [\delta,T])
\end{split}
\end{equation}
for all $p$, then we have
$$D^{m+1}u\in L^p(M\times [2\delta,T])\cap L^p(\partial M\times [2\delta,T])$$
for all $p$.
\end{lemma}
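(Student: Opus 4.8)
The final statement (Lemma \ref{lem2.4}) is a parabolic bootstrap, to be proved in the same spirit as Lemma \ref{lem2.3}: one differentiates the equation, observes that all terms produced by differentiating the $v$-dependent coefficients or by commuting derivatives past $\Delta_{g_0}$ are of strictly lower order and hence controlled by the hypotheses \eqref{L_p_assumption}, localizes in time by a cut-off to create zero initial data, and then applies the linear $L^p$ estimates of Lemma \ref{lem2.3}.

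The plan is as follows. Fix a multi-index $\alpha$ with $|\alpha|=m-1$ and set $w=D^\alpha u$. Applying $D^\alpha$ to the interior equation and to the boundary condition and expanding by the Leibniz rule shows that $w$ solves a boundary value problem of exactly the same type, with $w$ in place of $u$, the same coefficients $v^{4/(n-2)}$, $\alpha(v)f^{-1}$, $v^{2/(n-2)}$, $\beta(v)h^{-1}$, and with inhomogeneous terms $F_\alpha$ (in $M$) and $G_\alpha$ (on $\partial M$). Here $F_\alpha$ is a finite sum of $D^\alpha F$, of products $D^\gamma(v^{4/(n-2)})\,\partial_t D^{\alpha-\gamma}u$ and $D^\gamma(\alpha(v)f^{-1})\,\Delta_{g_0}D^{\alpha-\gamma}u$ with $0<\gamma\le\alpha$, and of the metric commutator $[\Delta_{g_0},D^\alpha]u$; $G_\alpha$ is an analogous sum involving $D^\alpha G$, tangential derivatives of $\beta(v)h^{-1}$ paired with $\partial u/\partial\nu_0$, and $\partial_t$-terms. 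The key structural point is that, because $|\alpha|=m-1$, every one of these terms involves at most $m$ derivatives of $F$, $G$, $v$ and at most $m$ derivatives of $u$: the top-order parts of $D^\alpha\Delta_{g_0}u$ and $\Delta_{g_0}D^\alpha u$ coincide, so $[\Delta_{g_0},D^\alpha]u$ is of order $\le m$ in $u$; in $\Delta_{g_0}D^{\alpha-\gamma}u$ one has $|\alpha-\gamma|\le m-2$; and each $\partial_t D^{\alpha-\gamma}u$ is rewritten, via the interior equation, as a smooth function of $v$ times $D^{\le m}u$ and $D^{\le m}F$. Hence $F_\alpha\in L^p(M\times[\delta,T])$ and $G_\alpha\in L^p(\partial M\times[\delta,T])$ for all $p$.

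Next, choose $\eta\in C^\infty([\delta,T])$ with $\eta(\delta)=0$, $\eta\equiv1$ on $[2\delta,T]$, $0\le\eta\le1$, and put $\tilde w=\eta w$. Then $\tilde w$ solves the same problem with $F_\alpha$ and $G_\alpha$ replaced by $\eta F_\alpha+\tfrac{4}{n-2}v^{4/(n-2)}\eta'w$ and $\eta G_\alpha+\tfrac{4}{n-2}v^{2/(n-2)}\eta'w$ and with zero initial data at $t=\delta$; the two extra forcing terms lie in $L^p$ since $w=D^{m-1}u\in L^p$ by \eqref{L_p_assumption}. Translating the time origin to $\delta$ and, if necessary, subdividing $[\delta,T]$ into short intervals, Lemma \ref{lem2.3} applies and yields $\partial_t\tilde w$ and $\partial_{x_i}\tilde w$ in $L^p(M\times[\delta,T])\cap L^p(\partial M\times[\delta,T])$ for all $p$. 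Since $\tilde w=w$ on $[2\delta,T]$ and $\alpha$ was an arbitrary multi-index of length $m-1$, this gives every spatial derivative of $u$ of order $m$, together with $\partial_tD^{m-1}u$, in $L^p$ over $[2\delta,T]$. To pick up the last derivative, read the interior equation as an elliptic equation $\Delta_{g_0}(D^{m-1}u)=(\text{$L^p$ data})$ on each time slice — the right side now being controlled by $\partial_tD^{m-1}u$, $D^{\le m-1}F$ and $D^{\le m}u$ — with Neumann-type boundary data supplied by the boundary condition, and invoke $L^p$ elliptic regularity for the pair $(\Delta_{g_0},\partial/\partial\nu_0)$ to conclude $D^{m+1}u\in L^p(M\times[2\delta,T])\cap L^p(\partial M\times[2\delta,T])$ for all $p$.

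I expect the main obstacle to be precisely the bookkeeping in the second paragraph — checking that differentiating the coefficients and commuting with $\Delta_{g_0}$ (and with the normal derivative on $\partial M$) never raises the order of $u$ beyond what \eqref{L_p_assumption} controls, which is exactly why one differentiates only $m-1$ times and recovers the $(m+1)$-st derivative from the equation itself rather than differentiating $m$ times directly. A secondary technical point is the time cut-off, needed because the hypotheses are only available on $[\delta,T]$ while Lemma \ref{lem2.3} is stated with prescribed initial data.
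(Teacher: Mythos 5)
Your reduction in the second paragraph (differentiating only $m-1$ times, using the Leibniz rule and the equation itself so that no term of order higher than $m$ in $u$ or $m-1$ in $F,G,v$ appears, then cutting off in time) is sound, but the endgame contains a genuine gap. After applying Lemma \ref{lem2.3} to $\eta D^{\alpha}u$ with $|\alpha|=m-1$ you only recover first derivatives of $D^{m-1}u$, i.e. $D^{m}u$, which is already part of hypothesis \eqref{L_p_assumption}; so the whole gain of one derivative rests on your slice-wise elliptic argument, and that step fails. The Neumann data you feed to the pair $(\Delta_{g_0},\partial/\partial\nu_0)$ --- namely $D^{m-1}$ of an expression built from $\beta(v)^{-1}h$, $v^{\frac{2}{n-2}}\partial_t u$ and $G$ restricted to $\partial M$ --- is controlled only in $L^{p}(\partial M)$ by the hypotheses, whereas $W^{2,p}$ elliptic estimates for the Neumann problem require boundary data in the trace space $W^{1-1/p,p}(\partial M)$; membership in $L^{p}(\partial M)$ for every $p$ carries no positive smoothness, so you cannot conclude $D^{m-1}u(\cdot,t)\in W^{2,p}(M)$ on time slices. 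Even granting that regularity, the lemma also asserts $D^{m+1}u\in L^{p}(\partial M\times[2\delta,T])$, and second derivatives of a $W^{2,p}(M)$ function have no $L^{p}(\partial M)$ trace in general, so the boundary half of the conclusion is out of reach of the elliptic detour.

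The paper's proof stays parabolic and inducts on $m$. For the step from $m$ to $m+1$ one differentiates the problem once (in $t$, and similarly in the spatial directions): $\partial_t u$ solves a problem of exactly the same type, and the dangerous term $\partial_t\alpha(v)\,f^{-1}\Delta_{g_0}u$ produced by differentiating the coefficient is rewritten by means of the original equation as $\frac{\partial_t\alpha(v)}{\alpha(v)}\bigl(\frac{4}{n-2}v^{\frac{4}{n-2}}\partial_t u-F\bigr)$, so the new forcing involves only $D^{m-1}$ of $F$, $G$, $v$ and $\partial_t u$ and verifies \eqref{L_p_assumption} at level $m-1$; applying the assertion at that level (hence, after the time cut-off of the base case, Lemma \ref{lem2.3}) to the differentiated problem yields $D^{m}\partial_t u$, and likewise $D^{m}\partial_{x_i}u$, in $L^{p}$ over $[2\delta,T]$ both in $M$ and on $\partial M$. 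The point your elliptic step cannot reproduce is precisely that Lemma \ref{lem2.3}, because of the dynamic boundary condition, delivers the extra derivative in $L^{p}(\partial M\times[2\delta,T])$ as well as in the interior; to repair your argument, replace the slice-wise elliptic regularity by an application of Lemma \ref{lem2.3} (or of the induction hypothesis) to the once-differentiated parabolic problem, exactly as in the base case $m=0$.
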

\begin{proof}
To prove the assertion for $m=0$, we choose a cut-off function $\eta$ such that
$\eta(t)=0$ for $0\leq t\leq\delta$ and $\eta(t)=1$ for $t\geq 2\delta$. Then the function $\eta u$ satisfies
\begin{equation*}
\frac{4}{n-2}v^{\frac{4}{n-2}}\partial_t (\eta u)+\frac{4(n-1)}{n-2}\alpha(v) f^{-1}\Delta_{g_0}(\eta u)\\
=\eta F+\frac{4}{n-2}v^{\frac{4}{n-2}}u\partial_t\eta
\end{equation*}
in $M$ with boundary condition
\begin{equation*}
\frac{4}{n-2}v^{\frac{2}{n-2}}\partial_t(\eta u)-\frac{2}{n-2}\beta(v)h^{-1}\frac{\partial}{\partial \nu_0}(\eta u)=\eta G+\frac{4}{n-2}v^{\frac{2}{n-2}}u\partial_t\eta
\end{equation*}
on $\partial M$.
Since
$$\eta F+\frac{4}{n-2}v^{\frac{4}{n-2}}u\partial_t\eta\in L^p(\partial M\times [\delta,T])$$
and
$$\eta G+\frac{4}{n-2}v^{\frac{2}{n-2}}u\partial_t\eta\in L^p(\partial M\times [\delta,T])$$
by \eqref{L_p_assumption}, it follows from
Lemma \ref{lem2.3} that
$$D(\eta u)\in L^p(M\times [\delta,T])\cap L^p(\partial M\times [\delta,T]),$$
which implies that
$$D u\in L^p(M\times [2\delta,T])\cap L^p(\partial M\times [2\delta,T])$$
since $\eta(t)=1$ for $t\geq 2\delta$.

Now we assume that the assertion holds for $m$. The function $\partial_t u$ satisfies
\begin{equation*}
\begin{split}
&\frac{4}{n-2}v^{\frac{4}{n-2}}\partial_t (\partial_t u)+\frac{4(n-1)}{n-2}\alpha(v) f^{-1}\Delta_{g_0}(\partial_t u)\\
&=\partial_t F-\frac{16}{(n-2)^2}v^{\frac{6-n}{n-2}}\partial_t v\partial_t u
-\frac{4(n-1)}{n-2}\partial_t\alpha(v) f^{-1}\Delta_{g_0}u\\
&=\partial_t F-\frac{16}{(n-2)^2}v^{\frac{6-n}{n-2}}\partial_t v\partial_t u
+\frac{\partial_t\alpha(v)}{\alpha(v)}\Big(\frac{4}{n-2}v^{\frac{4}{n-2}}\partial_t u-F\Big)
\end{split}
\end{equation*}
in $M$
with boundary condition
\begin{equation*}
\begin{split}
&\frac{4}{n-2}v^{\frac{2}{n-2}}\partial_t(\partial_t u)-\frac{2}{n-2}\beta(v)h^{-1}\frac{\partial}{\partial \nu_0}(\partial_t u)\\
&=\partial_t G+\frac{8}{(n-2)^2}v^{\frac{4-n}{n-2}}\partial_t v\partial_t u
+\frac{2}{n-2}\partial_t\beta(v)h^{-1}\frac{\partial u}{\partial\nu_0}\\
&=\partial_t G+\frac{8}{(n-2)^2}v^{\frac{4-n}{n-2}}\partial_t v\partial_t u
+\frac{\partial_t\beta(v)}{\beta(v)}\Big(\frac{4}{n-2}v^{\frac{2}{n-2}}\partial_t u-G\Big)
\end{split}
\end{equation*}
on $\partial M$.
Note that
\begin{equation*}
\frac{\partial_t\alpha(v)}{\alpha(v)}=\frac{2}{n}\left(\int_M-fv^{\frac{2n}{n-2}}d\mu_{g_0}\right)^{-1}\int_M-fv^{\frac{n+2}{n-2}}\partial_t vd\mu_{g_0}
\end{equation*}
and
\begin{equation*}
\frac{\partial_t\beta(v)}{\beta(v)}=\frac{2}{n-1}\left(\int_{\partial M}-hv^{\frac{2(n-1)}{n-2}}d\sigma_{g_0}\right)^{-1}\int_{\partial M}-hv^{\frac{n}{n-2}}\partial_t vd\sigma_{g_0},
\end{equation*}
which implies that
$$\left|\partial_t^{m-1}\Big(\frac{\partial_t\alpha(v)}{\alpha(v)}\Big)\right|\leq C
\quad
\mbox{ and }\quad \left|\partial_t^{m-1}\Big(\frac{\partial_t\beta(v)}{\beta(v)}\Big)\right|\leq C$$
by \eqref{L_p_assumption} and the fact that $v$ is bounded below by a positive constant.
This together with \eqref{L_p_assumption} imply that
\begin{equation*}
\begin{split}
&D^{m-1}\left(\partial_t F-\frac{16}{(n-2)^2}v^{\frac{6-n}{n-2}}\partial_t v\partial_t u
+\frac{\partial_t\alpha(v)}{\alpha(v)}\Big(\frac{4}{n-2}v^{\frac{4}{n-2}}\partial_t u-F\Big)\right)\in L^p(M\times [\delta,T]),\\
&D^{m-1}\left(\partial_t G+\frac{8}{(n-2)^2}v^{\frac{4-n}{n-2}}\partial_t v\partial_t u
+\frac{\partial_t\beta(v)}{\beta(v)}\Big(\frac{4}{n-2}v^{\frac{2}{n-2}}\partial_t u-G\Big)\right)\in L^p(\partial M\times [\delta,T]),\\
&D^{m-1}\partial_t u\in L^p(M\times [\delta,T])\cap L^p(\partial M\times [\delta,T]),\\
&D^{m-1} v\in L^p(M\times [\delta,T])\cap L^p(\partial M\times [\delta,T]),
\end{split}
\end{equation*}
for all $p$. From this, it follows from Lemma \ref{lem2.3} that
$$D^m\partial_t u\in  L^p(M\times [2\delta,T])\cap L^p(\partial M\times [2\delta,T])$$
for all $p$. Similar estimates hold for the spatial derivatives. Thus we conclude that
$$D^{m+1} u\in  L^p(M\times [2\delta,T])\cap L^p(\partial M\times [2\delta,T])$$
as required.
\end{proof}

Now we use the contraction mapping principle to prove the short time existence of
the flows (\ref{mixed_cur_flows}).

\begin{theorem}\label{thm:short time}
The partial differential equation
$$\frac{4}{n-2}u^{\frac{4}{n-2}}\partial_t u+\frac{4(n-1)}{n-2}\alpha(u) f^{-1}\Delta_{g_0}u=
\alpha(u) f^{-1}R_{g_0}u
-\lambda(u)u^{\frac{n+2}{n-2}}$$
in $M$ with boundary condition
$$\frac{4}{n-2}u^{\frac{2}{n-2}}\partial_t u-\frac{2}{n-2}\beta(u)h^{-1}\frac{\partial u}{\partial \nu_0}=
\beta(u)h^{-1}h_{g_0}u-\lambda(u)u^{\frac{n}{n-2}}$$
 on $\partial M$ and the initial condition
$$u=u_0$$
for $t=0$ has a unique solution on some small time interval $0\leq t\leq T$,
where $\lambda(u)$ is defined as
\begin{equation*}
\lambda(u)=
-\frac{\int_M (\tfrac{4(n-1)}{n-2}|\nabla u|_{g_0}^2 +R_{g_0}u^2) d\mu_{g_0}+2(n-1)\int_{\partial M}h_{g_0}u^2 d\sigma_{g_0}
}{a\Big(\int_M -f u^{\frac{2n}{n-2}}d\mu_{g_0}\Big)^{\frac{n-2}{n}}+2(n-1)b\Big(\int_{\partial M}-h u^{\frac{2(n-1)}{n-2}}d\sigma_{g_0}\Big)^{\frac{n-2}{n-1}}}.
\end{equation*}
Moreover, the solution is of class
$C^{0,\gamma}$ for $0\leq t\leq T$ and smooth for $0<t\leq T$.
\end{theorem}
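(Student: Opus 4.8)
The plan is to establish short-time existence by the contraction mapping principle, in the spirit of Brendle \cite{Brendle2}: the unknown $u$ enters both the leading coefficient $u^{4/(n-2)}$ and, nonlocally, the quantities $\alpha(u),\beta(u),\lambda(u)$, so I would \emph{freeze} these at an auxiliary function $v$, solve the resulting linear problem by Lemma \ref{lem2.3}, and close the loop using the parabolic $L^p$ estimates and the gain of a positive power of $T$ they carry. Concretely, fix a large exponent $p$ (so that the relevant parabolic Sobolev embedding into $C^{0,\gamma}(\bar M\times[0,T])$, and into $C^{0,\gamma}(\partial M\times[0,T])$, holds for some $\gamma\in(0,1)$) and introduce the complete metric space
\[
\mathcal X_{T}=\Big\{\,v:\ v(\cdot,0)=u_0,\ \ \partial_t v,\ \nabla v\in L^p(M\times[0,T])\cap L^p(\partial M\times[0,T]),\ \ \|v-u_0\|_{\mathcal X_T}\le\rho\,\Big\},
\]
where $\|\cdot\|_{\mathcal X_T}$ is the sum of those $L^p$ norms and $\rho,T$ are small parameters to be fixed. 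For $v\in\mathcal X_T$ one has $\|v-u_0\|_{C^{0,\gamma}(\bar M\times[0,T])}\le C\rho$, hence, shrinking $\rho$, $v\ge\tfrac12\min_{\bar M}u_0>0$; since $f<0$ and $h<0$, the quantities $\alpha(v),\beta(v)$ and $\lambda(v)=-E[v(t)]/(\,\cdots\,)$ are then well defined, bounded between positive constants, and depend on $v$ in a locally Lipschitz fashion on $\mathcal X_T$.

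Next I would define $\Phi:\mathcal X_T\to\mathcal X_T$ by $\Phi(v)=u$, where $u$ is the unique solution given by Lemma \ref{lem2.3} of the \emph{linear} problem
\[
\tfrac{4}{n-2}v^{\frac4{n-2}}\partial_t u+\tfrac{4(n-1)}{n-2}\alpha(v)f^{-1}\Delta_{g_0}u=\alpha(v)f^{-1}R_{g_0}\,v-\lambda(v)\,v^{\frac{n+2}{n-2}}\ \ \text{in }M,
\]
\[
\tfrac{4}{n-2}v^{\frac2{n-2}}\partial_t u-\tfrac{2}{n-2}\beta(v)h^{-1}\tfrac{\partial u}{\partial\nu_0}=\beta(v)h^{-1}h_{g_0}\,v-\lambda(v)\,v^{\frac{n}{n-2}}\ \ \text{on }\partial M,
\]
with $u(0)=u_0$; a fixed point of $\Phi$ is exactly a solution of the system in the statement. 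The right-hand sides lie in $L^p$ with norms controlled by $\|v\|_{C^{0,\gamma}}$, so Lemma \ref{lem2.3} gives $\partial_t u,\nabla u\in L^p$ on $M\times[0,T]$ and on $\partial M\times[0,T]$; moreover the spatial-derivative estimates carry a factor $T^{1/2}$, and $u-u_0=\int_0^t\partial_t u\,d\tau$ is small in $L^\infty_tL^p_x$ by a factor $T^{1-1/p}$, so for $T$ small $\Phi$ maps $\mathcal X_T$ into itself. For the contraction, given $v_1,v_2\in\mathcal X_T$ the difference $w=\Phi(v_1)-\Phi(v_2)$ solves a linear problem of the same type with zero initial datum and right-hand sides bounded by $C\,\|v_1-v_2\|_{\mathcal X_T}$ — here one uses the uniform positive lower bound on the $v_i$, the locally Lipschitz dependence of $\alpha,\beta,\lambda$, and that $v\mapsto v^{4/(n-2)},\,v^{2/(n-2)}$ are Lipschitz on the bounded set $\mathcal X_T$. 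Applying Lemma \ref{lem2.3} once more, the same $T^{1/2}$- and $T^{1-1/p}$-smallness yields $\|\Phi(v_1)-\Phi(v_2)\|_{\mathcal X_T}\le\tfrac12\|v_1-v_2\|_{\mathcal X_T}$ once $T$ is small. Thus $\Phi$ has a unique fixed point $u\in\mathcal X_T$, which is a solution; uniqueness among \emph{all} solutions follows since any solution, by its regularity, belongs to $\mathcal X_{T'}$ for $T'$ small, hence coincides with the fixed point there, and then on $[0,T]$ by a standard continuation argument.

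It remains to upgrade regularity. The fixed point is by construction of class $C^{0,\gamma}$ on $\bar M\times[0,T]$ and positive there. For smoothness on $0<t\le T$ I would bootstrap with Lemma \ref{lem2.4}: writing the equation with $v=u$, on any slab $\delta\le t\le T$ the hypotheses \eqref{L_p_assumption} hold for $m=0$ (the source terms are smooth functions of the Hölder function $u$, and $u$ itself has the required base $L^p$ regularity from the previous step), so $Du\in L^p(M\times[2\delta,T])\cap L^p(\partial M\times[2\delta,T])$ for all $p$; iterating the lemma gives $D^m u$ in the same spaces for every $m$, and since $\delta>0$ is arbitrary, $u\in C^\infty(\bar M\times(0,T])$.

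I expect the main obstacle to be the genuinely nonstandard structure of the equation: it is quasilinear (the principal coefficient is a power of the unknown), it carries a \emph{dynamic} boundary condition coupling $\partial_t u$ to the conormal derivative $\partial u/\partial\nu_0$, and $\alpha,\beta,\lambda$ are \emph{nonlocal} functionals of $u$, with $\lambda$ depending even on $\|\nabla u\|_{L^2}$. The entire scheme rests on the sharp parabolic $L^p$ estimates of Lemmas \ref{lem2.1}--\ref{lem2.3}, whose decisive features are the $T^{1/2}$ gain on the first spatial derivatives and the $L^p$ control of $\partial_t u$ traced on the boundary; these are obtained by first solving the constant-coefficient half-space model and then passing to variable coefficients via freezing and a covering argument near and away from $\partial M$. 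Casting the dynamic boundary condition into a form to which these estimates apply, and checking that every error term in the self-map and contraction steps really comes with a positive power of $T$, is the technical heart of the proof.
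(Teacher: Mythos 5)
Your overall strategy coincides with the paper's: freeze the coefficient $v^{4/(n-2)}$ and the nonlocal quantities $\alpha(v),\beta(v),\lambda(v)$, solve the resulting linear problem by Lemma \ref{lem2.3}, run a contraction argument, and bootstrap regularity with Lemma \ref{lem2.4}. The gap is in the contraction step: the sources of smallness you invoke do not reach all components of your norm $\|\cdot\|_{\mathcal{X}_T}$, which includes $\|\partial_t(\Phi(v_1)-\Phi(v_2))\|_{L^p}$. The estimate of Lemma \ref{lem2.3} for $\partial_t u$ carries no factor of $T$ (only the spatial-derivative estimate gains $T^{1/2}$), so to contract that component you must show $\|F\|_{L^p}+\|G\|_{L^p}\le \epsilon(T)\,\|v_1-v_2\|_{\mathcal{X}_T}$ with $\epsilon(T)\to 0$. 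Most difference terms do gain a power of $T$, because $v_1-v_2$ vanishes at $t=0$ and is controlled through $\partial_t(v_1-v_2)$; but the term $(\lambda(v_1)-\lambda(v_2))\,v_2^{\frac{n+2}{n-2}}$ does not: $\lambda$ depends on $\int_M|\nabla v|_{g_0}^2\,d\mu_{g_0}$, so its Lipschitz estimate involves $\|\nabla(v_1-v_2)(t)\|_{L^2(M)}$, which is essentially part of your norm itself, and no time anchoring is available since $\partial_t\nabla(v_1-v_2)$ is not controlled in $\mathcal{X}_T$. As written you only obtain $\|\Phi(v_1)-\Phi(v_2)\|_{\mathcal{X}_T}\le C\|v_1-v_2\|_{\mathcal{X}_T}$ with $C$ independent of $T$, which is not a contraction.

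The paper closes exactly this point with a higher-integrability device you omit: for the difference problem, $F$ and $G$ are estimated in $L^{2p}$ (not $L^p$) by the $W^{1,p}$ distance of $v,\tilde v$; Lemma \ref{lem2.3} applied at exponent $2p$ gives $\|u-\tilde u\|_{W^{1,2p}}\le C\,\mathrm{dist}$, and then H\"older's inequality on the space-time domain of measure $O(T)$ yields $\|u-\tilde u\|_{W^{1,p}}\le C\,T^{\frac{1}{2p}}\,\mathrm{dist}$, i.e.\ a uniform $T^{1/(2p)}$ gain in front of every component, including the time derivative and the $\lambda$-difference contribution; the same trick gives the self-map property (your self-map step is salvageable anyway, since there $F,G$ are pointwise bounded and their $L^p$ norms are $O(T^{1/p})$). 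You should either import this $2p\to p$ H\"older-in-time step, or modify $\mathcal{X}_T$ (for instance add $v\in C^0([0,T];H^1(M))$, as the paper's space $E$ does, with a norm that lets the $\lambda$-term be treated separately) so that a genuine power of $T$ appears for that term. Your more careful treatment of the positive lower bound on $v$ (working in a small ball around $u_0$, which is what Lemma \ref{lem2.3} actually requires) and your Lemma \ref{lem2.4} bootstrap and uniqueness remarks are fine and match the paper.
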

\begin{proof}
For  any $p>n$, we define the set $E$ by
$$E=\{v\in C^0([0,T];H^1(M)); v(0,x)=u_0, \|v\|_{W^{1,p}(M\times[0,T])}+\|v\|_{W^{1,p}(\partial M\times[0,T])}\leq 1\}$$
and put the distance by
$$\|v-\tilde{v}\|:=\|v-\tilde{v}\|_{W^{1,p}(M\times[0,T])}+\|v-\tilde{v}\|_{W^{1,p}(\partial M\times[0,T])}, \hbox{~~for~~} v,\tilde{v}\in E.$$
It is routine to check that $E$ is a complete metric space.
For $v\in E$, we consider
$$\frac{4}{n-2}v^{\frac{4}{n-2}}\partial_t u+\frac{4(n-1)}{n-2}\alpha(v) f^{-1}\Delta_{g_0}u=\alpha(v) f^{-1}R_{g_0}v-\lambda(v)v^{\frac{n+2}{n-2}}
$$
in $M$ with boundary condition
$$\frac{4}{n-2}v^{\frac{2}{n-2}}\partial_t u-\frac{2}{n-2}\beta(v)h^{-1}\frac{\partial u}{\partial \nu_0}=
\beta(v)h^{-1}h_{g_0}v-\lambda(v)v^{\frac{n}{n-2}}$$
 on $\partial M$ and the initial condition
$$u=u_0$$
for $t=0$. Since
$$\|v\|_{W^{1,p}(M\times[0,T])}+\|v\|_{W^{1,p}(\partial M\times[0,T])}\leq 1,$$
the Sobolev inequality implies that
$$|v|\leq C$$
for some constant $C$ independent of $v$.
Thus, applying Lemma \ref{lem2.3} with
$$F=\alpha(v) f^{-1}R_{g_0}v
-\lambda(v)v^{\frac{n+2}{n-2}}\mbox{
and }G=\beta(v)h^{-1}h_{g_0}v-\lambda(v)v^{\frac{n}{n-2}},$$
we obtain
$$\|u\|_{W^{1,2p}(M\times[0,T])}+\|u\|_{W^{1,2p}(\partial M\times[0,T])}\leq C.$$
This together with the H\"{o}lder's inequality implies that
\begin{equation*}
\begin{split}
\|u\|_{W^{1,p}(M\times[0,T])}+\|u\|_{W^{1,p}(\partial M\times[0,T])}
&\leq C\|u\|_{W^{1,2p}(M\times[0,T])}T^{\frac{1}{2p}}+C\|u\|_{W^{1,2p}(M\times[0,T])}T^{\frac{1}{2p}}\\
&\leq CT^{\frac{1}{2p}}.
\end{split}
\end{equation*}
Thus, we can conclude that
$$\|u\|_{W^{1,p}(M\times[0,T])}+\|u\|_{W^{1,p}(\partial M\times[0,T])}\leq 1$$
if $T$ is sufficiently small.
Hence, we can define a map
$$\mathcal{F}:E\to E$$
by $\mathcal{F}(v)=u.$

Put $\mathcal{F}(v)=u$ and $\mathcal{F}(\tilde{v})=\tilde{u}$. Then the function $u-\tilde{u}$ satisfies
\begin{equation}\label{2.5}
\begin{split}
&\frac{4}{n-2}v^{\frac{4}{n-2}}\partial_t (u-\tilde{u})+\frac{4(n-1)}{n-2}\alpha(v) f^{-1}\Delta_{g_0}(u-\tilde{u})\\
=&\alpha(v) f^{-1}R_{g_0}v-\alpha(\tilde{v}) f^{-1}R_{g_0}\tilde{v}
-\lambda(v)v^{\frac{n+2}{n-2}}+\lambda(\tilde{v})\tilde{v}^{\frac{n+2}{n-2}}+\frac{4}{n-2}(v^{\frac{4}{n-2}}-\tilde{v}^{\frac{4}{n-2}})\partial_t \tilde{u}\\
&+\frac{\alpha(v)-\alpha(\tilde{v}))}{\alpha(\tilde u)}\left[\alpha(\tilde u) f^{-1}R_{g_0}\tilde u
-\lambda(\tilde u)\tilde u^{\frac{n+2}{n-2}}-\frac{4}{n-2}\tilde u^{\frac{4}{n-2}}\partial_t \tilde u\right]:=F
\end{split}
\end{equation}
in $M$,
\begin{equation}\label{2.6}
\begin{split}
&\frac{4}{n-2}v^{\frac{2}{n-2}}\partial_t (u-\tilde{u})-\frac{2}{n-2}\beta(v)h^{-1}\frac{\partial}{\partial \nu_0}(u-\tilde{u})\\
&=\beta(v)h^{-1}h_{g_0}v-\beta(\tilde{v})h^{-1}h_{g_0}\tilde{v}-\lambda(v)v^{\frac{n}{n-2}}+\lambda(\tilde{v})\tilde{v}^{\frac{n}{n-2}}\\
&\hspace{4mm}+\frac{4}{n-2}(v^{\frac{2}{n-2}}-\tilde{v}^{\frac{2}{n-2}})\partial_t\tilde{u}-\frac{2}{n-2}(\beta(v)-\beta(\tilde{v}))h^{-1}\frac{\partial}{\partial \nu_0}\tilde{u}:=G
\end{split}
\end{equation}
on $\partial M$ and
$$u-\tilde{u}=0$$
for $t=0$.
Then the functions $F$ and $G$ satisfy
$$\|F\|_{L^{2p}(M\times[0,T])}\leq C(\|v-\tilde{v}\|_{W^{1,p}(M\times [0,T])}+\|v-\tilde{v}\|_{W^{1,p}(\partial M\times [0,T])})$$
and
$$\|G\|_{L^{2p}(\partial M\times[0,T])}\leq C(\|v-\tilde{v}\|_{W^{1,p}(M\times [0,T])}+\|v-\tilde{v}\|_{W^{1,p}(\partial M\times [0,T])}).$$
Hence, we can apply Lemma \ref{lem2.3} to \eqref{2.5} and \eqref{2.6}
to get
\begin{equation*}
\|u-\tilde{u}\|_{W^{1,2p}(M\times [0,T])}+\|u-\tilde{u}\|_{W^{1,2p}(\partial M\times [0,T])}
\leq C(\|v-\tilde{v}\|_{W^{1,p}(M\times [0,T])}+\|v-\tilde{v}\|_{W^{1,p}(\partial M\times [0,T])}).
\end{equation*}
This together with the H\"{o}lder's inequality imply that
\begin{align*}
&\|u-\tilde{u}\|_{W^{1,p}(M\times [0,T])}+\|u-\tilde{u}\|_{W^{1,p}(\partial M\times [0,T])}\\
\leq& CT^{\frac{1}{2p}}(\|v-\tilde{v}\|_{W^{1,p}(M\times [0,T])}+\|v-\tilde{v}\|_{W^{1,p}(\partial M\times [0,T])}).
\end{align*}
Thus, we can conclude that
\begin{equation*}
\|u-\tilde{u}\|_{W^{1,p}(M\times [0,T])}+\|u-\tilde{u}\|_{W^{1,p}(\partial M\times [0,T])}
\leq \frac{1}{2}(\|v-\tilde{v}\|_{W^{1,p}(M\times [0,T])}+\|v-\tilde{v}\|_{W^{1,p}(\partial M\times [0,T])})
\end{equation*}
if $T$ is sufficiently small. The above means that $\mathcal{F}$ is a contraction mapping. Therefore, the contraction mapping principle implies that
$\mathcal{F}$ has a unique fixed point $u\in E$. It follows from Lemma \ref{lem2.4}
that $u$ is smooth for $0<t\leq T$. Since $\mathcal{F}(u)=u$, the function $u$ is the
desired solution of the flows \eqref{mixed_cur_flows_u} by \eqref{evolution_u}.
\end{proof}

\section{Uniform bounds of conformal factor}\label{Sect5}

This section is devoted to the proof of uniform bounds of the conformal factor along the flows.

\begin{lemma}\label{lower_bdd_u}
Along the flows \eqref{mixed_cur_flows}, the conformal factor $u$ satisfies
\begin{equation}\label{1.22}
\min_{\overline{M}} u(t)^{\frac{n+2}{n-2}}\geq\min\left\{\min_{\overline{M}}u(0)^{\frac{n+2}{n-2}},
\left(\frac{\alpha_1}{\lambda_2}\min_M (f^{-1}R_{g_0})\right)^{\frac{n+2}{4}},
\left(\frac{\beta_1}{\lambda_2}\min_{\partial M} (h^{-1}h_{g_0})\right)^{\frac{n+2}{2}}\right\}.
\end{equation}
Here, $\alpha_i$, $\beta_i$ and $\lambda_i, i=1,2$ are the positive constants given in \eqref{alpha_beta_lambda_bdd}.
\end{lemma}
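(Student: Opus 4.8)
The statement is a pointwise lower bound for the conformal factor, so the natural approach is a parabolic maximum-principle argument by contradiction on $\overline M\times[0,T]$ for an arbitrary $T$ in the existence interval. The key structural inputs are that $f,h,R_{g_0},h_{g_0}$ are all negative (by \eqref{initial_metric}), so $f^{-1}R_{g_0}>0$ and $h^{-1}h_{g_0}>0$, while $\alpha,\beta,\lambda$ are positive with the uniform bounds \eqref{alpha_beta_lambda_bdd}. I would write $c_1$ for the $\tfrac{n-2}{n+2}$-th power of the right-hand side of \eqref{1.22}, i.e.
\[c_1=\min\Big\{\min_{\overline M}u(0),\ \big(\tfrac{\alpha_1}{\lambda_2}\min_M (f^{-1}R_{g_0})\big)^{\frac{n-2}{4}},\ \big(\tfrac{\beta_1}{\lambda_2}\min_{\partial M}(h^{-1}h_{g_0})\big)^{\frac{n-2}{2}}\Big\}>0.\]
Since raising to the increasing power $\tfrac{n+2}{n-2}$ commutes with $\min$, it suffices to prove $u\geq c_1$ on $\overline M\times[0,T]$.

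First I would pick $(x_0,t_0)$ realizing the minimum of $u$ over the compact set $\overline M\times[0,T]$, and assume for contradiction that $u(x_0,t_0)<c_1$. Because $c_1\leq\min_{\overline M}u(0)$, this forces $t_0>0$, where the flow is smooth by Theorem \ref{thm:short time}, and then $(x_0,t_0)$ being a time-minimum over $[0,t_0]$ gives $\partial_t u(x_0,t_0)\leq 0$. If $x_0\in M$, then $\Delta_{g_0}u(x_0,t_0)\geq 0$ at the interior minimum, and since $f<0$ the term $-\tfrac{4(n-1)}{n-2}\alpha f^{-1}\Delta_{g_0}u$ in the first equation of \eqref{evolution_u} is $\geq 0$ at $(x_0,t_0)$; discarding it and factoring out $u>0$ yields
\[\partial_t\big(u^{\frac{n+2}{n-2}}\big)(x_0,t_0)\ \geq\ \tfrac{n+2}{4}\,u(x_0,t_0)\big(\alpha(t_0)(f^{-1}R_{g_0})(x_0)-\lambda(t_0)u(x_0,t_0)^{\frac{4}{n-2}}\big).\]
The assumption $u(x_0,t_0)<\big(\tfrac{\alpha_1}{\lambda_2}\min_M (f^{-1}R_{g_0})\big)^{(n-2)/4}$ combined with $\alpha(t_0)\geq\alpha_1$ and $\lambda(t_0)\leq\lambda_2$ makes the right-hand side strictly positive, contradicting $\partial_t(u^{(n+2)/(n-2)})(x_0,t_0)=\tfrac{n+2}{n-2}u^{4/(n-2)}\partial_t u\leq 0$.

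If instead $x_0\in\partial M$, I would use that $x_0$ minimizes $u(\cdot,t_0)$ over all of $\overline M$ — not merely over $\partial M$ — so the outward normal derivative satisfies $\partial u/\partial\nu_0(x_0,t_0)\leq 0$; since $h<0$, the term $\tfrac{2}{n-2}\beta h^{-1}\partial u/\partial\nu_0$ in the boundary equation of \eqref{evolution_u} is then $\geq 0$, and the same manipulation gives
\[\partial_t\big(u^{\frac{n}{n-2}}\big)(x_0,t_0)\ \geq\ \tfrac{n}{4}\,u(x_0,t_0)\big(\beta(t_0)(h^{-1}h_{g_0})(x_0)-\lambda(t_0)u(x_0,t_0)^{\frac{2}{n-2}}\big),\]
which is $>0$ by $u(x_0,t_0)<\big(\tfrac{\beta_1}{\lambda_2}\min_{\partial M}(h^{-1}h_{g_0})\big)^{(n-2)/2}$ and \eqref{alpha_beta_lambda_bdd}, again contradicting $\partial_t(u^{n/(n-2)})(x_0,t_0)\leq 0$. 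Both cases being impossible, $u\geq c_1$ on $\overline M\times[0,T]$, and letting $T$ run over the existence interval proves \eqref{1.22}. I expect the only genuinely delicate points to be the sign bookkeeping — every geometric quantity here is negative whereas $\alpha,\beta,\lambda$ are positive — and the observation that in the boundary case one must exploit minimality over the whole manifold to pin down the sign of $\partial u/\partial\nu_0$; the endpoint $t_0=T$ is harmless because $\partial_t u(x_0,T)\leq 0$ still holds there.
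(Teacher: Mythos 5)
Your proposal is correct and is essentially the paper's own argument: a maximum-principle contradiction at a minimum point of $u$, discarding the $\Delta_{g_0}u$ term (interior case) and the $\partial u/\partial\nu_0$ term (boundary case) by the signs of $f,h$ and the minimality, then using \eqref{alpha_beta_lambda_bdd} and the negativity of $R_{g_0},h_{g_0}$ to force a strictly positive time derivative against $\partial_t u\le 0$. The only difference is cosmetic: the paper perturbs with the auxiliary function $u^{\frac{n+2}{n-2}}+\epsilon(1+t)$ and argues at the first touching time before letting $\epsilon\to 0$, whereas you argue directly at the global space-time minimum, where the assumed strict violation already yields the strict contradiction.
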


Before proving Lemma \ref{lower_bdd_u}, we would like to point out that
the constant on the right hand side of \eqref{1.22} is positive,
thanks to \eqref{initial_metric} and the assumption that $f<0$ and $h<0$.

\begin{proof}
If the flows \eqref{mixed_cur_flows} exist on $[0,T)$, i.e.
a positive solution $u$ exists for \eqref{evolution_u} on $[0,T)$, then for $\epsilon> 0$, we define $F(x,t)=u(x,t)^{\frac{n+2}{n-2}}+\epsilon(1 + t)$ on $\overline{M}\times[0,T)$. For abbreviation, we
denote by $C$ the constant on the right-hand side of \eqref{1.22}.
By contradiction, we suppose that
\begin{equation}\label{1.23}
F(x,t)\leq C
\end{equation}
for some $x\in {\overline{M}}$ and $t=t_0$. Then $t_0>0$ because $F=u(0)^{\frac{n+2}{n-2}}+\epsilon>\displaystyle\min_{\overline{M}}u(0)^{\frac{n+2}{n-2}}$
at $t=0$. We may assume that $t_0$ is the smallest $t$ which satisfies (\ref{1.23}). Then we
have
\begin{equation}\label{1.24}
F(x, t)>C\mbox{ for }t \in [0, t_0), F(x, t_0) \geq C\mbox{~~and~~}F(x_0, t_0) = C\mbox{~~ for some }x_0 \in \overline{M}.
\end{equation}
This implies that
\begin{equation}\label{1.25}
u(x_0,t_0)=\min_{x\in\overline{M}}u(x,t_0).
\end{equation}
We have the following two cases according to $x_0\in M$ and $x_0\in\partial M$.\\
Case (i). If $x_0\in M$, then
by the fact that $F(x_0, t_0) = C$
we have
\begin{equation}\label{1.26}
u(x_0,t_0)^{\frac{4}{n-2}}\leq\frac{\alpha_1}{\lambda_2}\min_M (f^{-1}R_{g_0}).
\end{equation}
Therefore, at $(x_0,t_0)$, we have
\begin{align*}
0\geq\frac{\partial F}{\partial t}&=\epsilon+\frac{\partial}{\partial t}\Big(u^{\frac{n+2}{n-2}}\Big)\\
&=\epsilon+\frac{n+2}{4}\Big(
-\frac{4(n-1)}{n-2}\alpha(t) f^{-1}\Delta_{g_0}u+\alpha(t) f^{-1}R_{g_0}u
-\lambda(t)u^{\frac{n+2}{n-2}}\Big)\\
&\geq\epsilon+\frac{n+2}{4}\Big(\alpha(t) f^{-1}R_{g_0}u
-\lambda(t)u^{\frac{n+2}{n-2}}\Big)\geq \epsilon,
\end{align*}
where the first inequality follows from \eqref{1.24},
the second equality follows from \eqref{evolution_u},
the second inequality follows from \eqref{alpha_beta_lambda_bdd}, \eqref{1.25} and the fact that $f<0$,
and the last inequality follows from \eqref{1.26} and
the fact that $u>0$.
This contradicts $\epsilon> 0$.\\
Case (ii). If $x_0\in \partial M$, then
by the fact that $F(x_0, t_0) = C$
we have
\begin{equation}\label{1.27}
u(x_0,t_0)^{\frac{2}{n-2}}\leq\frac{\beta_1}{\lambda_2}\min_{\partial M} (h^{-1}h_{g_0}).
\end{equation}
Therefore, at $(x_0,t_0)$, we have
\begin{align*}
0\geq\frac{\partial F}{\partial t}&=\epsilon+\frac{\partial}{\partial t}\Big(u^{\frac{n+2}{n-2}}\Big)\\
&=\epsilon+\frac{n+2}{4}u^{\frac{2}{n-2}}\Big(\frac{2}{n-2}\beta(t)h^{-1}\frac{\partial}{\partial \nu_0}u+\beta(t)h^{-1}h_{g_0}u-\lambda(t)u^{\frac{n}{n-2}}\Big)\\
&\geq \epsilon+\frac{n+2}{4}u^{\frac{2}{n-2}}\Big(\beta(t)h^{-1}h_{g_0}u-\lambda(t)u^{\frac{n}{n-2}}\Big)\geq \epsilon,
\end{align*}
where the first inequality follows from \eqref{1.24},
the second equality follows from \eqref{evolution_u},
the second inequality follows from
\eqref{alpha_beta_lambda_bdd}, \eqref{1.25} and the fact that $h<0$,
and the last equality follows from \eqref{1.27} and
the fact that $u>0$.
This again contradicts $\epsilon> 0$.

Therefore, we have $F(x, t)>C$ and  Lemma \ref{lower_bdd_u} follows by letting $\epsilon \to 0$.
\end{proof}

Similarly, we can prove the following:

\begin{lemma}\label{upper_bdd_u}
Along the flows \eqref{mixed_cur_flows}, the conformal factor $u$ satisfies
\begin{equation*}
\max_{\overline{M}} u(t)^{\frac{n+2}{n-2}}\leq\max\left\{\max_{\overline{M}}u(0)^{\frac{n+2}{n-2}},
\left(\frac{\alpha_2}{\lambda_1}\max_M (f^{-1}R_{g_0})\right)^{\frac{n+2}{4}},
\left(\frac{\beta_2}{\lambda_1}\max_{\partial M} (h^{-1}h_{g_0})\right)^{\frac{n+2}{2}}\right\}.
\end{equation*}
\end{lemma}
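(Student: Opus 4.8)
The plan is to mirror the argument for Lemma \ref{lower_bdd_u}, replacing the role of the minimum by the maximum and reversing all the relevant inequalities. Concretely, suppose the flow \eqref{mixed_cur_flows} exists on $[0,T)$, and for $\epsilon>0$ set
\[
F(x,t)=u(x,t)^{\frac{n+2}{n-2}}-\epsilon(1+t)\quad\text{on }\overline M\times[0,T).
\]
Denote by $C$ the constant on the right-hand side of the asserted inequality, namely the $\max$ of $\max_{\overline M}u(0)^{\frac{n+2}{n-2}}$, $\big(\tfrac{\alpha_2}{\lambda_1}\max_M(f^{-1}R_{g_0})\big)^{\frac{n+2}{4}}$ and $\big(\tfrac{\beta_2}{\lambda_1}\max_{\partial M}(h^{-1}h_{g_0})\big)^{\frac{n+2}{2}}$; note $C>0$ since $R_{g_0}<0$, $h_{g_0}<0$ by \eqref{initial_metric} and $f<0$, $h<0$. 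Arguing by contradiction, assume $F(x,t)\geq C$ for some $x\in\overline M$ and some $t=t_0$; since $F<u(0)^{\frac{n+2}{n-2}}\le C$ at $t=0$ we get $t_0>0$, and taking $t_0$ minimal we have $F(x,t)<C$ for $t\in[0,t_0)$, $F(\cdot,t_0)\le C$ and $F(x_0,t_0)=C$ for some $x_0\in\overline M$. This forces $u(x_0,t_0)=\max_{\overline M}u(\cdot,t_0)$.

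Then I would split into the two cases $x_0\in M$ and $x_0\in\partial M$ exactly as in Lemma \ref{lower_bdd_u}. In the interior case, $F(x_0,t_0)=C$ gives $u(x_0,t_0)^{\frac{4}{n-2}}\geq\frac{\alpha_2}{\lambda_1}\max_M(f^{-1}R_{g_0})$. At the maximum point one has $-\Delta_{g_0}u\ge 0$, so multiplying by $\alpha(t)f^{-1}<0$ (since $f<0$) reverses the sign, and from the evolution equation \eqref{evolution_u} together with the bounds \eqref{alpha_beta_lambda_bdd},
\[
0\le\frac{\partial F}{\partial t}
=-\epsilon+\frac{n+2}{4}\Big(-\tfrac{4(n-1)}{n-2}\alpha(t)f^{-1}\Delta_{g_0}u+\alpha(t)f^{-1}R_{g_0}u-\lambda(t)u^{\frac{n+2}{n-2}}\Big)
\le -\epsilon,
\]
where the last inequality uses $\alpha(t)f^{-1}R_{g_0}u\le\alpha_2 f^{-1}R_{g_0}u$ (as $R_{g_0}<0$, $u>0$), $\lambda(t)\ge\lambda_1$, and the lower bound on $u(x_0,t_0)^{\frac{4}{n-2}}$ just derived; this contradicts $\epsilon>0$. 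The boundary case is handled identically using the second line of \eqref{evolution_u}: at a boundary maximum $\frac{\partial u}{\partial\nu_0}\ge 0$, so $\beta(t)h^{-1}\frac{\partial u}{\partial\nu_0}\le 0$ since $h<0$, and $F(x_0,t_0)=C$ gives $u(x_0,t_0)^{\frac{2}{n-2}}\geq\frac{\beta_2}{\lambda_1}\max_{\partial M}(h^{-1}h_{g_0})$, yielding again $0\le\partial_tF\le-\epsilon$, a contradiction. Hence $F(x,t)<C$ for all $(x,t)$, and letting $\epsilon\to 0$ proves the lemma.

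There is essentially no serious obstacle here: the statement is the exact dual of Lemma \ref{lower_bdd_u}, and the only points needing care are bookkeeping of signs — checking that at an \emph{interior maximum} the term $-\alpha(t)f^{-1}\Delta_{g_0}u$ has the favorable sign because $f<0$ flips $-\Delta_{g_0}u\ge0$, and similarly that $\beta(t)h^{-1}\partial_{\nu_0}u\le0$ at a boundary maximum because $h<0$ — and making sure one pairs $\alpha_2,\beta_2$ with the upper bound and $\lambda_1$ with the lower bound so that the terms $\alpha(t)f^{-1}R_{g_0}u-\lambda(t)u^{\frac{n+2}{n-2}}$ (resp. its boundary analogue) is $\le 0$ at the hypothetical contact point. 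Since this is purely a matter of reversing each inequality used in the previous proof, it suffices to say "the proof is analogous to that of Lemma \ref{lower_bdd_u}, with the obvious modifications" after indicating these sign choices.
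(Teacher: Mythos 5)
Your proposal is correct and coincides with the paper's intended argument: the paper omits the proof of Lemma \ref{upper_bdd_u}, stating only that it is almost identical to that of Lemma \ref{lower_bdd_u} with the auxiliary function $F(x,t)=u(x,t)^{\frac{n+2}{n-2}}-\epsilon(1+t)$, which is exactly the construction you carry out. Your sign bookkeeping (interior maximum giving $-\Delta_{g_0}u\ge 0$ hence $-\alpha f^{-1}\Delta_{g_0}u\le 0$, boundary maximum giving $\beta h^{-1}\partial_{\nu_0}u\le 0$, and pairing $\alpha_2,\beta_2$ with $\lambda_1$ so that the zeroth-order terms are nonpositive at the contact point) is accurate.
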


We skip the proof of Lemma \ref{upper_bdd_u}, since it is almost identical to the one of Lemma \ref{lower_bdd_u},
except we choose the auxiliary function $F(x,t)=u(x,t)^{\frac{n+2}{n-2}}-\epsilon(1 + t)$ in this case.

\section{Long time existence and asymptotic convergence}\label{Sect6}

From \eqref{mixed_cur_flows_u}, \eqref{eq:sc} and \eqref{eq:mc}, we obtain
\begin{align}\label{R_t}
\partial_t R_g=\partial_t\Big[u^{-\frac{n+2}{n-2}}L_{g_0}(u)\Big]
=&-\frac{n+2}{n-2}u^{-\frac{2n}{n-2}}u_t L_{g_0}(u)+u^{-\frac{n+2}{n-2}}L_{g_0}(u_t)\no\\
=&-\frac{n+2}{4}R_g(\alpha f^{-1}R_g-\lambda)+\frac{n-2}{4}u^{-\frac{n+2}{n-2}}L_{g_0}((\alpha f^{-1}R_g-\lambda)u)\no\\
=&-\frac{n+2}{4}R_g(\alpha f^{-1}R_g-\lambda)+\frac{n-2}{4}L_g(\alpha f^{-1}R_g-\lambda)\no\\
=&-(n-1)\Delta_g(\alpha f^{-1}R_g-\lambda)-R_g(\alpha f^{-1}R_g-\lambda).
\end{align}
and
\begin{align}\label{h_t}
\partial_t h_g=\frac{2}{n-2}\partial_t\Big[u^{-\frac{n}{n-2}}B_{g_0}(u)\Big]
=&\frac{2}{n-2}\Big[-\frac{n}{n-2}u^{-\frac{2(n-1)}{n-2}}u_tB_{g_0}(u)+u^{-\frac{n}{n-2}}B_{g_0}(u_t)\Big]\no\\
=&-\frac{n}{4}(\beta h^{-1}h_g-\lambda)h_g+\frac{1}{2}u^{-\frac{n}{n-2}}B_{g_0}((\beta h^{-1}h_g-\lambda)u)\no\\
=&-\frac{n}{4}(\beta h^{-1}h_g-\lambda)h_g+\frac{1}{2}B_g(\beta h^{-1}h_g-\lambda)\no\\
=&\frac{1}{2}\frac{\partial}{\partial \nu_g}(\beta h^{-1}h_g-\lambda)-\frac{1}{2}(\beta h^{-1}h_g-\lambda)h_g,
\end{align}
where we have used conformal covariance of both operators $L_{g_0}$ and $B_{g_0}$ that
$$L_{g_0}(u\varphi)=u^{\frac{n+2}{n-2}}L_g(\varphi)\quad \hbox{~and~} \quad B_{g_0}(u\varphi)=u^{\frac{n }{n-2}}B_g(\varphi) \hbox{~~for~~} \varphi \in C^\infty(M).$$
It follows from \eqref{eqn:constrant_time_funcs} and \eqref{mixed_cur_flows_u} that
\begin{equation}\label{alpha_t}
\partial_t\alpha
=-\alpha\Big(\int_M -fd\mu_g\Big)^{-1}\int_M f(\alpha f^{-1} R_g-\lambda)d\mu_g
\end{equation}
and
\begin{equation}\label{beta_t}
\partial_t\beta=-\frac{\beta}{2}\Big(\int_{\partial M}-hd\sigma_g\Big)^{-1}\int_{\partial M}h(\beta h^{-1}h_g-\lambda)d\sigma_g.
\end{equation}
On the other hand, it follows from
\eqref{eqn:constrant_time_funcs}, \eqref{conserved_quantity}
and \eqref{energy_rate} that
\begin{align}\label{lambda_t}
\partial_t\lambda=&-\frac{d}{dt}E[u(t)]\no\\
=&\frac{n-2}{2}\left[\alpha^{-1} \int_M-f(\alpha f^{-1}R_g-\lambda)^2d\mu_g+2(n-1)\beta^{-1}\int_{\partial M}-h(\beta h^{-1}h_g-\lambda)^2d\sigma_g\right]\no\\
\geq& 0.
\end{align}
Therefore,  we have in $M$
\begin{align}\label{v_t_1}
&\partial_t (\alpha f^{-1}R_g-\lambda)\no\\
=&\alpha f^{-1}\partial_t R_g+[(\alpha f^{-1}R_g-\lambda)+\lambda]\alpha^{-1}\partial_t\alpha -\partial_t \lambda\no\\
=&-(n-1)\alpha f^{-1}\Delta_g(\alpha f^{-1}R_g-\lambda)-(\alpha f^{-1}R_g-\lambda)^2
-\lambda(\alpha f^{-1}R_g-\lambda)\no\\
&+[(\alpha f^{-1}R_g-\lambda)+\lambda]\alpha^{-1}\partial_t\alpha -\partial_t \lambda\no\\
\leq&-(n-1)\alpha f^{-1}\Delta_g(\alpha f^{-1}R_g-\lambda)-(\alpha f^{-1}R_g-\lambda)^2
-\lambda(\alpha f^{-1}R_g-\lambda)\no\\
&+[(\alpha f^{-1}R_g-\lambda)+\lambda]\Big(\int_M -fd\mu_g\Big)^{-1}\Big(\int_M -f(\alpha f^{-1} R_g-\lambda)d\mu_g\Big),
\end{align}
where the second equality follows from \eqref{R_t}, and the last inequality follows from \eqref{alpha_t} and \eqref{lambda_t}.
On the other hand,  we have  on $\partial M$
\begin{align}\label{v_t_2}
&\partial_t (\beta h^{-1}h_g-\lambda)\no\\
=&\beta h^{-1}\partial_t h_g+[(\beta h^{-1}h_g-\lambda)+\lambda]\beta^{-1}\partial_t\beta -\partial_t \lambda\no\\
=&\frac{1}{2}\beta h^{-1}\frac{\partial}{\partial \nu_g}(\beta h^{-1}h_g-\lambda)-\frac{1}{2}(\beta h^{-1}h_g-\lambda)^2-\frac{1}{2}\lambda(\beta h^{-1}h_g-\lambda)\no\\
&+[(\beta h^{-1}h_g-\lambda)+\lambda]\beta^{-1}\partial_t\beta -\partial_t \lambda\no\\
\leq& \frac{1}{2}\beta h^{-1}\frac{\partial}{\partial \nu_g}(\beta h^{-1}h_g-\lambda)-\frac{1}{2}(\beta h^{-1}h_g-\lambda)^2-\frac{1}{2}\lambda(\beta h^{-1}h_g-\lambda)\no\\
&+\frac{1}{2}[(\beta h^{-1}h_g-\lambda)+\lambda]\Big(\int_{\partial M}-hd\sigma_g\Big)^{-1}\Big(\int_{\partial M}-h(\beta h^{-1}h_g-\lambda)d\sigma_g\Big),
\end{align}
where the second equality follows from \eqref{h_t}, and the last inequality follows from \eqref{alpha_t} and \eqref{lambda_t}.

Let
$v=\displaystyle\frac{4}{n-2}\partial_t\log u.$ Then it follows from \eqref{mixed_cur_flows_u} that
\begin{equation}\label{v}
\begin{split}
\left\{\begin{array}{ll}
v=\alpha f^{-1}R_g-\lambda &\hspace{2mm}\mbox{ in }M,\\
v=\beta h^{-1}h_g-\lambda&\hspace{2mm}\mbox{ on }\partial M.
\end{array}
\right.
\end{split}
\end{equation}
As in \cite{ChenHo}, we can show that scalar curvature and mean curvature are uniformly bounded below.

\begin{lemma}\label{lem:v_bdd}
Along the flows \eqref{mixed_cur_flows}, there holds\footnote{The proof here requires $C^0$-regularity of $R_{g(t)}$ and $h_{g(t)}$ near $t=0$, which is lack due to Theorem \ref{thm:short time}. However, the same argument works well starting from half of the maximal existence time of the flows instead of $t=0$.}
\begin{equation*}
v(x,t)\leq \max\left\{\sup_M\big(\alpha(0) f^{-1}R_{g_0}-\lambda(0)\big),\sup_{\partial M}\big(\beta(0) h^{-1}h_{g_0}-\lambda(0)\big),0\right\}:=C_0
\end{equation*}
for all $x \in \overline M$ and $t \geq 0$.
\end{lemma}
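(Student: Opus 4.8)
The plan is to prove Lemma~\ref{lem:v_bdd} by a maximum principle in the spirit of the proof of Lemma~\ref{lower_bdd_u} (adapting the argument of \cite{ChenHo}), using the evolution inequality \eqref{v_t_1} for $v$ in $M$, the boundary evolution inequality \eqref{v_t_2} on $\partial M$, and the identification \eqref{v} of $v$ with $\alpha f^{-1}R_g-\lambda$ in $M$ and with $\beta h^{-1}h_g-\lambda$ on $\partial M$. Three preliminary facts make the argument run. First, $C_0\ge 0$ by definition, and $\max_{\overline M}v(\cdot,0)\le C_0$ since $v(\cdot,0)=\alpha(0)f^{-1}R_{g_0}-\lambda(0)$ in $M$ and $v(\cdot,0)=\beta(0)h^{-1}h_{g_0}-\lambda(0)$ on $\partial M$. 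Second, $E[u_0]<0$ (cf.\ \eqref{initial_value}) forces $\lambda(0)>0$, and $\partial_t\lambda\ge 0$ by \eqref{lambda_t}, so $\lambda(t)>0$ for all $t$; hence at any point where $v>0$ one automatically has $v+\lambda>0$. Third, the sign pattern $\alpha,\beta>0$ and $f,h<0$ gives $-(n-1)\alpha f^{-1}>0$ and $\tfrac12\beta h^{-1}<0$, so that the second order term in \eqref{v_t_1} and the normal-derivative term in \eqref{v_t_2} point the favourable way at a spatial maximum.

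For $\epsilon>0$ I would consider $F(x,t)=v(x,t)-\epsilon(1+t)$, which satisfies $F(\cdot,0)<C_0$, and argue by contradiction. If $\max_{\overline M}F(\cdot,t)\ge C_0$ for some $t$, let $t_0$ be the first such time, attained at some $x_0\in\overline M$; then $F\le C_0$ on $\overline M\times[0,t_0)$ and $F(x_0,t_0)=C_0$, so $\partial_t F(x_0,t_0)\ge 0$, while $v(x_0,t_0)=C_0+\epsilon(1+t_0)>0$ so that $v(x_0,t_0)+\lambda(t_0)>0$, and $x_0$ is also a maximum point of $v(\cdot,t_0)$. If $x_0\in M$, then $\Delta_g v(x_0,t_0)\le 0$, so the leading term of \eqref{v_t_1} is $\le 0$; moreover $\langle v\rangle_M:=\big(\int_M-f\,d\mu_g\big)^{-1}\int_M-fv\,d\mu_g$ is a weighted average of $v$ with positive weights, hence $\langle v\rangle_M\le\sup_M v\le v(x_0,t_0)$, so the zeroth order terms of \eqref{v_t_1} collapse to $-v(v+\lambda)+(v+\lambda)\langle v\rangle_M=(v+\lambda)(\langle v\rangle_M-v)\le 0$; therefore $\partial_t v(x_0,t_0)\le 0$ and $\partial_t F(x_0,t_0)\le-\epsilon<0$, a contradiction. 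If $x_0\in\partial M$, then $\tfrac{\partial v}{\partial\nu_g}(x_0,t_0)\ge 0$ because $x_0$ is a boundary maximum, so the normal-derivative term of \eqref{v_t_2} is $\le 0$, and the same averaging argument over $\partial M$ reduces the remaining terms to $\tfrac12(v+\lambda)(\langle v\rangle_{\partial M}-v)\le 0$; hence again $\partial_t F(x_0,t_0)\le-\epsilon<0$, a contradiction. Thus $F<C_0$ on $\overline M\times[0,\infty)$, and letting $\epsilon\to 0$ gives $v\le C_0$.

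The conceptual core is the sign bookkeeping just described, and it is exactly what $E[u_0]<0$ is there to supply. The only genuinely delicate technical points I anticipate are: (i) the lack of $C^0$-control on $R_{g(t)},h_{g(t)}$ as $t\to 0^+$ noted in the footnote, which is dealt with by running the whole argument starting from half of the maximal existence time of the flow, where $v$ is smooth by Theorem~\ref{thm:short time}; and (ii) the elementary first-contact-time analysis already used for Lemma~\ref{lower_bdd_u}, namely that at $t_0$ one has $\partial_t F(x_0,t_0)\ge 0$ together with the one-sided spatial inequality ($\Delta_g v\le 0$ in the interior, $\partial v/\partial\nu_g\ge 0$ on the boundary), which requires knowing that $t\mapsto\max_{\overline M}F(\cdot,t)$ is continuous on the interval under consideration.
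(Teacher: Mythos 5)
Your proposal is correct and follows essentially the same route as the paper's proof: the auxiliary function $F=v-\epsilon(1+t)$, the first-contact-time contradiction at an interior or boundary maximum using \eqref{v_t_1}, \eqref{v_t_2}, the weighted-average bound $\langle v\rangle\le v(x_0,t_0)$, the positivity $\lambda(t)\ge\lambda(0)>0$, and letting $\epsilon\to 0$. The only difference is presentational (you make the positivity of $\lambda$ and the continuity/first-contact bookkeeping explicit, which the paper absorbs into \eqref{alpha_beta_lambda_bdd} and the same limiting argument).
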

\begin{proof}
For $\epsilon>0$, let $F=v-\epsilon(t+1)$ on $\overline M\times [0,\infty)$. We claim that
$F<C_0$ on $\overline M\times [0,\infty)$.
By contradiction, we suppose that
\begin{equation}\label{4.10}
F(x_0,t_0)\geq C_0\mbox{ for some }(x_0,t_0)\in \overline{M}\times [0,\infty).
\end{equation}
Note that $t_0>0$ since
$$F(x,0)=v(x,0)-\epsilon=\alpha(0) f^{-1}R_{g_0}-\lambda(0)-\epsilon<C_0\mbox{~~in~~}M$$
and
$$F(x,0)=v(x,0)-\epsilon=\beta(0) h^{-1}h_{g_0}-\lambda(0)-\epsilon<C_0\mbox{~~on~~}\partial M.$$
We assume that $t_0$ is the smallest time satisfying \eqref{4.10}, i.e.
\begin{equation}\label{4.11}
F(x,t)\leq C_0\mbox{ for all }(x,t)\in \overline{M}\times [0,t_0),
F(x,t_0)\leq C_0\mbox{ for all }x\in \overline{M}\mbox{ and }F(x_0,t_0)=C_0.
\end{equation}
This implies that
\begin{equation}\label{4.12}
v(x_0,t_0)=\max_{x\in\overline{M}}v(x,t_0)=C_0+\epsilon(t_0+1)\geq 0.
\end{equation}
We have the following two cases.\\
Case (i). If $x_0\in M$, then at $(x_0,t_0)$ we have
\begin{align*}
0\leq \partial_t F=&-\epsilon+\partial_t v\\
\leq&-\epsilon-(n-1)\alpha f^{-1}\Delta_g v-v^2
-\lambda v+(v+\lambda)\Big(\int_M -fd\mu_g\Big)^{-1}\Big(\int_M -fvd\mu_g\Big)\\
\leq&-\epsilon-(v+\lambda)\left[v-\Big(\int_M -fd\mu_g\Big)^{-1}\Big(\int_M -fvd\mu_g\Big)\right]\leq -\epsilon,
\end{align*}
where the first inequality follows from \eqref{4.11}, the second inequality follows from \eqref{v_t_1},
the third inequality follows from \eqref{4.12} and the fact that $f<0$ and $\alpha>0$ by \eqref{alpha_beta_lambda_bdd},
and the last  inequality follows from \eqref{4.12}. This contradicts $\epsilon>0$.\\
Case (ii). If $x_0\in\partial M$, then at $(x_0,t_0)$ we have
\begin{align*}
0\leq \partial_t F=&-\epsilon+\partial_t v\\
\leq&-\epsilon+
\frac{1}{2}\beta h^{-1}\frac{\partial}{\partial \nu_g}v-\frac{1}{2}v^2-\frac{1}{2}\lambda v+\frac{1}{2}(v+\lambda)\Big(\int_{\partial M}-hd\sigma_g\Big)^{-1}\Big(\int_{\partial M}-h vd\sigma_g\Big)\\
\leq &-\epsilon-\frac{1}{2}(v+\lambda)\left[v-\Big(\int_{\partial M}-hd\sigma_g\Big)^{-1}\Big(\int_{\partial M}-h vd\sigma_g\Big)\right]\leq -\epsilon,
\end{align*}
where the first inequality follows from \eqref{4.11}, the second inequality follows from \eqref{v_t_2},
the third inequality follows from \eqref{4.12} and the fact that $h<0$ and $\beta>0$ by \eqref{alpha_beta_lambda_bdd},
and the last  inequality follows from \eqref{4.12}. This again contradicts $\epsilon>0$.

This proves the claim. That is, $F=v-\epsilon(t+1)<C_0$ on $M\times [0,\infty)$.
Now the assertion follows by letting $\epsilon \to 0$.
\end{proof}

For $p\geq 2$, we let
$$F_p(t)=
-\alpha^{-1}\int_Mf|\alpha f^{-1}R_g-\lambda|^pd\mu_{g}-2(n-1)\beta^{-1}\int_{\partial M}h|\beta h^{-1}h_g-\lambda|^pd\sigma_{g}.$$
It follows from  \eqref{lambda_t} that
\begin{equation}\label{lambda_t_bdd}
\partial_t\lambda=\frac{n-2}{2} F_2(t).
\end{equation}
By \eqref{alpha_beta_lambda_bdd}, Lemma \ref{volume_bdd} and the assumption that $f,h<0$,
we can apply H\"{o}lder's inequality to \eqref{alpha_t} and \eqref{beta_t}
to get
\begin{equation}\label{alpha_t_beta_t_bdd}
|\alpha_t|\leq C F_2(t)^{\frac{1}{2}}\hspace{2mm}\mbox{ and }\hspace{2mm}|\beta_t|\leq C F_2(t)^{\frac{1}{2}}
\end{equation}
for some uniform constant $C$.

By using \eqref{v}, we can rewrite $F_p(t)$ in terms of $v$ as
$$F_p(t)=
-\alpha^{-1}\int_Mf|v|^pd\mu_{g}-2(n-1)\beta^{-1}\int_{\partial M}h|v|^pd\sigma_{g}.$$
Then we have
\begin{align*}
\frac{d}{dt}F_p(t)
&=-p\alpha^{-1}\int_Mf|v|^{p-2}v\partial_tvd\mu_{g}-2(n-1)p\beta^{-1}\int_{\partial M}h|v|^{p-2}v\partial_tvd\sigma_{g}\\
&\hspace{4mm}-\frac{n}{2}\alpha^{-1}\int_Mf|v|^p vd\mu_{g}-(n-1)^2\beta^{-1}\int_{\partial M}h|v|^p vd\sigma_{g}\\
&\hspace{4mm}+\alpha^{-2}\partial_t\alpha\int_Mf|v|^pd\mu_{g}+2(n-1)\beta^{-2}\partial_t\beta\int_{\partial M}h|v|^pd\sigma_{g}.
\end{align*}
Combining this with  \eqref{v_t_1} and \eqref{v_t_2},
we obtain
\begin{align}\label{F_p_t}
&\frac{d}{dt}F_p(t)\no\\
=&-(n-1)p(p-1)\int_M|v|^{p-2}|\nabla v|_g^2d\mu_g\no\\
&+\left[\Big(\frac{p\lambda}{\alpha}-\frac{(p-1)\partial_t\alpha}{\alpha^2}\Big)\int_Mf|v|^{p}d\mu_{g}+(n-1)\Big(\frac{p\lambda}{\beta}-\frac{2(p-1)\partial_t \beta}{\beta^2}\Big)\int_{\partial M}h|v|^{p}d\sigma_{g}\right]\no\\
&+\left[p\Big(\frac{\partial_t\lambda}{\alpha}-\frac{\lambda\partial_t\alpha}{\alpha^2}\Big)\int_Mf|v|^{p-2}vd\mu_{g}+2(n-1)p\Big(\frac{\partial_t\lambda}{\beta}-\frac{\lambda\partial_t\beta}{\beta^2}\Big)\int_{\partial M}h|v|^{p-2}vd\sigma_{g}\right]\no\\
&+\left[(p-\frac{n}{2})\alpha^{-1}\int_Mf |v|^p vd\mu_{g}+(n-1)(p-n+1)\beta^{-1}\int_{\partial M}h|v|^p vd\sigma_{g}\right]\\
:=&I^{(p)}+II^{(p)}+III^{(p)}+IV^{(p)},\no
\end{align}
where we have used the following identity which follows from integration by parts:
\begin{equation*}
\int_M|v|^{p-2}v\Delta_g vd\mu_{g}-\int_{\partial M}|v|^{p-2}v\frac{\partial}{\partial\nu_g}vd\sigma_{g}
=-(p-1)\int_M|v|^{p-2}|\nabla v|_g^2d\mu_g.
\end{equation*}

In the following, we first recall the Sobolev inequality and sharp Sobolev trace inequality in a compact manifold with boundary. Let $\omega_n$ denote the volume of the unit sphere in $\mathbb{R}^{n+1}$. Then it follows from \cite[Theorem 2.30]{aubin_book} or \cite[Theorem 2]{Hebey-Vaugon} that given any $\epsilon>0$, there exists $A_\epsilon=A(\epsilon,M,g_0)>0$ such that
\begin{equation}\label{Sobolev_ineq}
\left(\int_M |\varphi|^\criti d\mu_{g_0}\right)^{\frac{n-2}{n}}\leq (K(n,2)+\epsilon)\int_M |\nabla \varphi|_{g_0}^2 d\mu_{g_0}+A_\epsilon \int_M \varphi^2 d\mu_{g_0}
\end{equation}
for all $\varphi \in H^1(M,g_0)$, where $K(n,2)=\frac{4}{n(n-2)}\omega_n^{-\frac{2}{n}}$. The sharp Sobolev trace inequality was proved by Li and Zhu (cf. \cite[Theorem 0.1]{li-zhu1}): There exists $B=B(M,g_0)>0$ such that
\begin{equation}\label{Sobolev_trace_ineq}
\left(\int_{\partial M}|\varphi|^{\frac{2(n-1)}{n-2}}d\sigma_{g_0}\right)^{\frac{n-2}{n-1}}\leq S\int_M|\nabla \varphi|_{g_0}^2d\mu_{g_0}
+B\int_{\partial M}\varphi^2d\sigma_{g_0}
\end{equation}
for all $\varphi\in H^1(M,g_0)$, where $S=\frac{2}{n-2}\omega_{n-1}^{-1}$.

\begin{lemma}\label{lem4.2}
For any fixed $T>0$, there exists a positive constant $C$ independent of $T$ such that
\begin{equation}\label{est:F_2_T}
F_2(t)\leq C \quad\hbox{~~for all~~} 0 \leq t \leq T.
\end{equation}
Moreover, if the flows exist for all time, there holds
$$\lim_{t\to\infty}F_2(t)=0.$$
\end{lemma}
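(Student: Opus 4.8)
The idea is a differential-inequality argument for $F_2(t)$, using the evolution formula \eqref{F_p_t} with $p=2$ together with the uniform bounds from Section \ref{Sect3} and the Sobolev/Sobolev-trace inequalities \eqref{Sobolev_ineq}--\eqref{Sobolev_trace_ineq}. First I would specialize \eqref{F_p_t} to $p=2$, so that the gradient term is $I^{(2)}=-2(n-1)\int_M|\nabla v|_g^2\,d\mu_g$, and estimate each of the four groups $I^{(2)},II^{(2)},III^{(2)},IV^{(2)}$. The key structural point is that $v$ is uniformly bounded \emph{above} by the constant $C_0$ of Lemma \ref{lem:v_bdd}, and $\lambda(t)$, $\alpha(t)$, $\beta(t)$ lie in fixed compact subintervals of $(0,\infty)$ by \eqref{alpha_beta_lambda_bdd}; moreover $|\alpha_t|,|\beta_t|\leq CF_2(t)^{1/2}$ and $\partial_t\lambda=\tfrac{n-2}{2}F_2(t)$ by \eqref{alpha_t_beta_t_bdd} and \eqref{lambda_t_bdd}. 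Using $f,h<0$ and these bounds, $II^{(2)}$ is controlled by $C(1+F_2(t)^{1/2})F_2(t)$, while $III^{(2)}$ involves $F_2(t)$ times integrals of $|v|$ which, by Cauchy--Schwarz and Lemma \ref{volume_bdd}, are bounded by $CF_2(t)^{1/2}$, giving a contribution $\le CF_2(t)^{3/2}$. The cubic-in-$v$ term $IV^{(2)}$ is the delicate one: since $v\le C_0$, one writes $|v|^2v=|v|^2(v-C_0)+C_0|v|^2$, the first piece has a favorable sign (because $f,h<0$ and $v-C_0\le0$) and the second is $\le C_0 F_2(t)$ up to the volume-weight constants, so $IV^{(2)}\le CF_2(t)$.

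Assembling these estimates yields a differential inequality of the shape
\begin{equation*}
\frac{d}{dt}F_2(t)\le -2(n-1)\int_M|\nabla v|_g^2\,d\mu_g + C\,F_2(t) + C\,F_2(t)^{3/2}.
\end{equation*}
To close this into a bound I would bring in the Sobolev inequalities: rewriting $\int_M|\nabla v|_g^2\,d\mu_g$ in the background metric $g_0$ (the conformal Laplacian identity $\int_M |\nabla\varphi|_g^2 d\mu_g = \int_M(\frac{n-2}{4(n-1)}R_{g_0}\varphi^2+|\nabla\varphi|_{g_0}^2)d\mu_{g_0}+\ldots$ handled as in \cite{ChenHo}), and using \eqref{Sobolev_ineq} and \eqref{Sobolev_trace_ineq} to dominate $\|v\|_{L^{2n/(n-2)}(M)}$ and $\|v\|_{L^{2(n-1)/(n-2)}(\partial M)}$, hence (via the volume bounds of Lemma \ref{volume_bdd} and Hölder) to dominate the zeroth-order $L^2$ masses of $v$ that make up $F_2(t)$ itself and the error terms $C F_2 + C F_2^{3/2}$. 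This is exactly where the negativity of $Y(M,\partial M)$ enters — as emphasized in the introduction — because it forces the relevant quadratic form to have the coercivity needed to absorb the lower-order terms into the good gradient term. The outcome is an inequality $\frac{d}{dt}F_2(t)\le -c\,F_2(t)+C$ for some $c>0$ once $F_2$ is large (or, more robustly, $\frac{d}{dt}F_2\le -cF_2$ when $F_2\ge M_0$ for a fixed threshold), which by an ODE comparison gives $F_2(t)\le\max\{F_2(0),M_0\}=:C$ uniformly in $t$, proving \eqref{est:F_2_T} with $C$ independent of $T$.

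For the second assertion, assume the flow is global. Integrating \eqref{lambda_t_bdd} and using that $\lambda(t)$ is nondecreasing and bounded above by $\lambda_2$ from \eqref{alpha_beta_lambda_bdd}, we get $\int_0^\infty F_2(t)\,dt=\frac{2}{n-2}(\lambda_\infty-\lambda(0))<\infty$, so $F_2\in L^1([0,\infty))$. Combined with the uniform bound just established and a bound on $\frac{d}{dt}F_2(t)$ — which follows from the same differential inequality plus the now-uniform control on all quantities — $F_2(t)$ is uniformly continuous on $[0,\infty)$, and an integrable uniformly continuous nonnegative function must tend to $0$; hence $\lim_{t\to\infty}F_2(t)=0$.

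**Main obstacle.** The hardest step is controlling the cubic term $IV^{(2)}$ and the $\partial_t\lambda$-driven term $III^{(2)}$ simultaneously: one has only a one-sided bound $v\le C_0$, not $|v|\le C$, so the large \emph{negative} values of $v$ (equivalently, large $|\alpha f^{-1}R_g-\lambda|$ where $R_g$ is very negative, which is not excluded a priori) must be absorbed, and this is possible only because the cubic contributes with the \emph{right sign} after splitting off $C_0$, and because the Sobolev trace inequality lets the gradient term dominate. Getting the constants to work out — in particular checking that the coefficient in front of $\int_M|\nabla v|_g^2$ genuinely beats the Sobolev constants $K(n,2)$ and $S$ after the conformal change, which is where $Y(M,\partial M)<0$ is used — is the crux of the argument.
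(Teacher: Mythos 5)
Your proposal has a genuine gap at its decisive step. The inequality you aim for, $\frac{d}{dt}F_2(t)\le -c\,F_2(t)+C$, cannot be extracted from \eqref{Sobolev_ineq}, \eqref{Sobolev_trace_ineq} and $Y(M,\partial M)<0$: those inequalities bound the \emph{higher} norms $\|v\|_{L^{2n/(n-2)}(M)}$, $\|v\|_{L^{2(n-1)/(n-2)}(\partial M)}$ by the gradient term \emph{plus} the $L^2$ masses, and there is no Poincar\'e-type coercivity of $\int_M|\nabla v|_g^2\,d\mu_g$ over $\int_M(-f)v^2d\mu_g+\int_{\partial M}(-h)v^2d\sigma_g$ — a (large) constant $v$ has zero gradient but arbitrarily large $F_2$, and nothing in your argument excludes $v$ being nearly constant and large in absolute value (only the one-sided bound $v\le C_0$ of Lemma \ref{lem:v_bdd} is available). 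The negativity of the conformal invariants is used in the paper through Lemma \ref{volume_bdd}, \eqref{alpha_beta_lambda_bdd} and the choice of initial metric, not as coercivity of the quadratic form. Accordingly, the paper never obtains a dissipative ODE: for $n\ge4$ it only proves $\frac{d}{dt}F_2\le CF_2$ and closes by combining this with the bound $\int_0^T F_2(t)\,dt\le C$ \emph{independent of} $T$, which follows from the energy identity \eqref{energy_gap} together with the uniform lower bound on $E[u(t)]$ from Lemma \ref{volume_bdd}; integrating the linear differential inequality then gives \eqref{est:F_2_T}, and integrating from times $t_j$ with $F_2(t_j)\to0$ gives the limit. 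You do invoke the $L^1$-in-time bound (via \eqref{lambda_t_bdd}) for the second assertion, but not for the first, where your substitute argument fails; and your Barbalat-type step also presupposes the uniform bound you have not established.

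A second, independent problem is the cubic term $IV^{(2)}$ in dimension three. Your splitting $|v|^2v=|v|^2(v-C_0)+C_0|v|^2$ produces a favorable sign only when the coefficients $2-\tfrac n2$ and $(n-1)(3-n)$ in \eqref{F_p_t} are nonpositive, i.e. $n\ge4$; for $n=3$ the interior coefficient equals $+\tfrac12$, so $\tfrac12\alpha^{-1}\int_M f\,v^2(v-C_0)\,d\mu_g\ge0$ and the term is not controlled by $CF_2$. This is exactly why the paper treats $n=3$ separately, using H\"older/Young and \eqref{Sobolev_ineq} to absorb $\int_M|v|^3d\mu_{g_0}$ into the gradient term at the price of a factor $(1+F_2(t)^2)^{-1}$, and then integrating the resulting inequality (an arctan-type integrating factor). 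Finally, a smaller discrepancy: in $III^{(2)}$ the paper uses the conservation law \eqref{conserved_quantity} to show the $\partial_t\lambda$-part vanishes identically, yielding $III^{(2)}\le CF_2$, whereas your estimate leaves a superlinear $CF_2^{3/2}$; with the correct closure via the time-integral bound such a term requires an extra integrating-factor argument, so the exact cancellation is worth keeping.
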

\begin{proof}
Notice that
\begin{equation*}
I^{(2)}=-2(n-1)\int_M|\nabla v|_g^2d\mu_g.
\end{equation*}
By using  \eqref{alpha_beta_lambda_bdd},  \eqref{alpha_t}, \eqref{beta_t}, \eqref{v} and Lemma \ref{lem:v_bdd}, we can estimate
\begin{equation}\label{alpha_t_upper_bdd}
\partial_t\alpha \leq C_0 \alpha \leq C_0\alpha_2
\end{equation}
and
\begin{equation}\label{beta_t_upper_bdd}
\partial_t\beta \leq \frac{\beta C_0}{2}\leq \frac{\beta_2 C_0}{2}.
\end{equation}
From \eqref{alpha_beta_lambda_bdd}, \eqref{alpha_t_upper_bdd} and \eqref{beta_t_upper_bdd}, we can estimate
\begin{equation*}
\begin{split}
II^{(2)}&=(2\alpha^{-1}\lambda-\alpha^{-2}\partial_t\alpha)\int_Mfv^{2}d\mu_{g}
+(n-1)(2\beta^{-1}\lambda-2\beta^{-2}\partial_t\beta)\int_{\partial M}hv^{2}d\sigma_{g}\\
&\leq CF_2(t).
\end{split}
\end{equation*}
We rearrange
\begin{align*}
III^{(2)}=&2\partial_t\lambda\left[\alpha^{-1}\int_Mfvd\mu_{g}+2(n-1)\beta^{-1}\int_{\partial M}hvd\sigma_{g}\right]\\
&-2\lambda\alpha^{-2}\partial_t\alpha\int_Mfvd\mu_{g}
-4(n-1)\lambda\beta^{-2}\partial_t\beta\int_{\partial M}hvd\sigma_{g}.
\end{align*}
It follows from \eqref{alpha_t} and \eqref{beta_t} that
$$\partial_t\alpha=-a^{-\frac{n}{2}}\alpha^{\frac{2-n}{2}}\int_M fvd\mu_g \hbox{~~and~~} \partial_t\beta=-\frac{1}{2}b^{1-n}\beta^{2-n}\int_{\partial M}h d\sigma_g.$$
From this, we obtain
\begin{equation*}
\alpha^{-1}\int_M fvd\mu_g=-\frac{2}{n-2}\partial_t\left[a\Big(\int_M -fd\mu_g\Big)^{\frac{n-2}{n}}\right]
\end{equation*}
and
\begin{equation*}
\beta^{-1}\int_{\partial M}hd\sigma_g=-\frac{2}{n-2}\partial_t\left[b\Big(\int_{\partial M}-hd\sigma_g\Big)^{\frac{n-2}{n-1}}\right].
\end{equation*}
Thus, we get
\begin{align*}
&\alpha^{-1}\int_Mfvd\mu_{g}+2(n-1)\beta^{-1}\int_{\partial M}hvd\sigma_{g}\\
=&-\frac{2}{n-2}\partial_t\left[a\Big(\int_M -fd\mu_g\Big)^{\frac{n-2}{n}}+2(n-1)b\Big(\int_{\partial M}-hd\sigma_g\Big)^{\frac{n-2}{n-1}}\right]=0,
\end{align*}
where the last equality follows from the normalization \eqref{conserved_quantity}.
Therefore,  by \eqref{alpha_beta_lambda_bdd},  \eqref{lambda_t_bdd}, \eqref{alpha_t_beta_t_bdd} and H\"{o}lder's inequality, we obtain
\begin{equation*}
III^{(2)}=-2\lambda\alpha^{-2}\partial_t\alpha\int_Mfvd\mu_{g}
-4(n-1)\lambda\beta^{-2}\partial_t\beta\int_{\partial M}hvd\sigma_{g}\leq CF_2(t).
\end{equation*}
It remains to estimate the last term:
\begin{equation*}
IV^{(2)}=(2-\frac{n}{2})\alpha^{-1}\int_Mf v^{3}d\mu_{g}+
(n-1)(3-n)\beta^{-1}\int_{\partial M}hv^{3}d\sigma_{g}.
\end{equation*}
We divide it into two cases:

\noindent Case (i). If $n \geq 4$, it follows
from Lemma \ref{lem:v_bdd} and the assumptions $f<0$ and $h<0$ that
\begin{equation*}
IV^{(2)}\leq C_1\left[(2-\frac{n}{2})\alpha^{-1}\int_Mf v^2d\mu_{g}+
(n-1)(3-n)\beta^{-1}\int_{\partial M}hv^2d\sigma_{g}\right]\leq CF_2(t).
\end{equation*}
Putting these facts together, we have
\begin{equation}\label{4.17}
\frac{d}{dt}F_2(t)+2(n-1)\int_M|\nabla v|_g^2d\mu_g\leq CF_2(t).
\end{equation}
 By \eqref{energy_gap} and Lemma \ref{volume_bdd}, there exists a constant $C>0$ independent of $T$ such that
 $$\int_0^T F_2(t)dt\leq C.$$
 Then estimate \eqref{est:F_2_T} follows from integration over $(0,T)$. If the flows globally exist,  we obtain
\begin{equation}\label{4.18}
\int_0^\infty F_2(t)dt<\infty.
\end{equation}
Then there exists a sequence $\{t_j;j \in \mathbb{N}\}$ with $t_j\to\infty$ as $j\to\infty$ such that
\begin{equation}\label{4.20}
F_2(t_j)\to 0\mbox{ as }j\to\infty.
\end{equation}
Integrating \eqref{4.17} over $(t_j,t)$ to show
$$F_2(t)\leq F_2(t_j)+C\int_{t_j}^\infty F_2(s)ds.$$
By letting $j \to \infty$, we obtain $F_2(t) \to 0$ as $t \to \infty$.

\noindent Case (ii). When $n=3$, we again rewrite \eqref{F_p_t} as
\begin{align*}
\frac{d}{dt}F_2(t)&=-4\int_M |\nabla v|_g^2 d\mu_g+II+III+\frac{1}{2}\alpha^{-1}\int_Mf v^3d\mu_{g}\no\\
&\leq -C_1 \int_M |\nabla v|_{g_0}^2d\mu_{g_0}+C\int_M |v|^3 d\mu_{g_0}+CF_2(t)
\end{align*}
for some positive uniform constants $C_1$ and $C$, where the last inequality follows from the uniform bounds of $u$ by Lemmas \ref{lower_bdd_u} and \ref{upper_bdd_u}. By H\"older's and Young's inequalities, given any $\epsilon_0>0$, we obtain
\begin{equation}\label{int_est_v^3}
\int_M |v|^3 d\mu_{g_0}\leq \epsilon_0\left( \int_M |v|^6d\mu_{g_0}\right)^{\frac{1}{3}}+C_{\epsilon_0}\left(\int_M v^2 d\mu_{g_0}\right)^3.
\end{equation}
For sufficiently small $\epsilon_0>0$, from \eqref{int_est_v^3} and Lemmas \ref{lower_bdd_u}, \ref{upper_bdd_u}, we apply \eqref{Sobolev_ineq} to $v$ to show
\begin{equation}\label{ineq:F_2_t}
(1+F_2(t)^2)^{-1}\left(\frac{d}{dt}F_2(t)+\int_M |\nabla v|_g^2d\mu_g\right)\leq C F_2(t).
\end{equation}
Using the same argument in case (i), integrating \eqref{ineq:F_2_t} over $(0,T)$ gives estimate \eqref{est:F_2_T}. If the flows globally exist, by using the sequence $\{t_j\}$ as above, integrating \eqref{ineq:F_2_t} over $(t_j,t)$ to get
$$\arctan{F_2(t)}\leq \arctan{F_2(t_j)}+ C\int_{t_j}^\infty F_2(t) dt.$$
Letting $j \to \infty$ in the above inequality and from \eqref{4.18}, \eqref{4.20}, we obtain $\lim_{t \to \infty}F_2(t)=0$ as required.
\end{proof}

For simplicity, in the following we will give the $L^p$-estimates for $R_{g(t)}$ and $h_{g(t)}$ in infinite interval $[0,\infty)$ directly, since their estimates on finite time interval $[0,T]$ can be obtained by mimicking the proof of Lemma \ref{lem4.2}.

\begin{lemma}\label{mid_est_lem}
For $n \geq 4$ and $2 \leq p \leq \frac{n}{2}$, there hold
\begin{equation*}
\int_0^\infty F_p(t)dt<\infty
\quad\hbox{and}\quad\lim_{t \to \infty}F_p(t)=0.
\end{equation*}
\end{lemma}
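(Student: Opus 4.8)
The plan is to bootstrap from the case $p=2$ (Lemma \ref{lem4.2}) to higher exponents by a differential-inequality argument applied to $F_p(t)$, using the expansion \eqref{F_p_t} of $\frac{d}{dt}F_p(t)$ into the four terms $I^{(p)}, II^{(p)}, III^{(p)}, IV^{(p)}$. The term $I^{(p)}=-(n-1)p(p-1)\int_M|v|^{p-2}|\nabla v|_g^2d\mu_g$ is the good negative term; rewriting it via $|v|^{p-2}|\nabla v|_g^2 = \frac{4}{p^2}|\nabla(|v|^{p/2})|_g^2$ produces a Dirichlet integral of $w:=|v|^{p/2}$, which is exactly what is needed to feed into the Sobolev inequality \eqref{Sobolev_ineq} and the Sobolev trace inequality \eqref{Sobolev_trace_ineq}. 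The restriction $p\le n/2$ is precisely what makes the dangerous term $IV^{(p)}$ — which carries the factor $(p-\frac n2)\alpha^{-1}\int_M f|v|^pv\,d\mu_g+(n-1)(p-n+1)\beta^{-1}\int_{\partial M}h|v|^pv\,d\sigma_g$ — have a favorable sign (or at worst be controlled), since $p-\frac n2\le 0$ and $p-n+1<0$ for $n\ge4$, $p\le n/2$, and since $f<0$, $h<0$, $v\le C_0$ by Lemma \ref{lem:v_bdd}. So $IV^{(p)}\le C_1\big[(p-\frac n2)\alpha^{-1}\int_M f|v|^p\,d\mu_g+(n-1)(p-n+1)\beta^{-1}\int_{\partial M}h|v|^p\,d\sigma_g\big]\le CF_p(t)$.

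First I would dispose of $II^{(p)}$ and $III^{(p)}$. For $II^{(p)}$: the coefficients $\frac{p\lambda}{\alpha}-\frac{(p-1)\partial_t\alpha}{\alpha^2}$ and $\frac{p\lambda}{\beta}-\frac{2(p-1)\partial_t\beta}{\beta^2}$ are uniformly bounded, using \eqref{alpha_beta_lambda_bdd}, \eqref{alpha_t_upper_bdd}, \eqref{beta_t_upper_bdd}, so $II^{(p)}\le CF_p(t)$ directly (the integrands $f|v|^p$, $h|v|^p$ are exactly $-$ the densities of $F_p$ up to the $\alpha^{-1},\beta^{-1}$ weights). For $III^{(p)}$: here $\partial_t\lambda=\frac{n-2}{2}F_2(t)$ and $|\partial_t\alpha|,|\partial_t\beta|\le CF_2(t)^{1/2}$ by \eqref{alpha_t_beta_t_bdd}, so each of the four pieces is bounded by $C\big(F_2(t)+F_2(t)^{1/2}\big)\int_M|v|^{p-1}d\mu_{g_0}$ (and the boundary analogue); by Hölder and the uniform volume bounds this is $\le C\big(F_2(t)+F_2(t)^{1/2}\big)\big(1+F_p(t)^{(p-1)/p}\big)$, which by Young's inequality is $\le \epsilon F_p(t)+C\big(F_2(t)+F_2(t)^{1/2}\big)+C F_2(t)^{p/2}+\cdots$, hence absorbable once the Dirichlet term from $I^{(p)}$ is in play. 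Collecting, I expect to arrive at a differential inequality of the schematic form
\begin{equation*}
\frac{d}{dt}F_p(t)+c\int_M|\nabla w|_g^2\,d\mu_g\le C F_p(t)+C\big(F_2(t)+F_2(t)^{1/2}\big)+\text{(lower order in }F_2\text{)},
\end{equation*}
where $w=|v|^{p/2}$ and $c>0$.

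The crux is then to upgrade this into integrability of $F_p$ on $[0,\infty)$. The idea is to use \eqref{Sobolev_ineq} and \eqref{Sobolev_trace_ineq} applied to $w$ to bound $F_p(t)$ itself — which is (up to uniform constants coming from Lemmas \ref{lower_bdd_u}, \ref{upper_bdd_u} relating $d\mu_g,d\sigma_g$ to $d\mu_{g_0},d\sigma_{g_0}$) comparable to $\int_M w^2\,d\mu_{g_0}+\int_{\partial M}w^2\,d\sigma_{g_0}$ — by $\epsilon\int_M|\nabla w|_{g_0}^2 d\mu_{g_0}$ plus lower-order terms involving $F_{p'}$ for smaller $p'$, ultimately $F_2$. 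Since $F_2\in L^1(0,\infty)$ by \eqref{4.18} and $F_2\to0$, an induction on $p$ (or a finite iteration, since $p$ ranges over a bounded interval) gives $\int_0^\infty F_p(t)\,dt<\infty$; then the same trick as in Lemma \ref{lem4.2} — pick $t_j\to\infty$ with $F_p(t_j)\to0$ and integrate the differential inequality over $(t_j,t)$ — yields $\lim_{t\to\infty}F_p(t)=0$. The main obstacle I anticipate is the bookkeeping in the $III^{(p)}$ and interpolation estimates: one must keep the power of $F_2$ high enough (at least $1$, or a root that is still integrable after the induction hypothesis) and make sure the $\epsilon$-absorption into the Dirichlet term is uniform in $T$; handling the borderline exponent $p=\frac n2$, where $IV^{(p)}$ loses its interior negative contribution and one leans entirely on the boundary term $(n-1)(p-n+1)<0$, will require a little care but no new idea.
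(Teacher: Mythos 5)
Your route is genuinely different from the paper's. The paper proves this range of $p$ by a unit-step induction that never invokes the Sobolev inequalities and simply discards the Dirichlet term $I^{(p)}$: because $p\le \frac n2$ makes the coefficients $\frac n2-p\ge0$ and $n-1-p>0$, it moves $IV^{(p)}$ to the left-hand side of \eqref{F_p_t}, bounds $|II^{(p)}|,|III^{(p)}|\le CF_p$, integrates in $t$ over $(0,\infty)$, and uses the induction hypothesis $\int_0^\infty F_p\,dt<\infty$ to get $\int_0^\infty\int_M f|v|^pv\,d\mu_g\,dt<\infty$ and its boundary analogue; the one-sided bound $v\le C_0$ of Lemma \ref{lem:v_bdd} then converts these into $\int_0^\infty F_{p+1}\,dt<\infty$ via $-\int f|v|^{p+1}=-2\int_{\{v>0\}}f|v|^pv+\int f|v|^pv$, a return to \eqref{F_p_t} at level $p+1$ gives $F_{p+1}\to0$, and a Hölder interpolation $F_{n/2}\le sF_p+(1-s)F_{p+1}$ handles the endpoint. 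You instead throw away the sign information in $IV^{(p)}$ (bounding it by $CF_p$, exactly as the paper does for $IV^{(2)}$), keep the Dirichlet energy of $w=|v|^{p/2}$, and close a differential inequality by Sobolev absorption. That can be made to work, and it even avoids the induction: interpolating $\|w\|_{L^2}$ between $\|w\|_{L^{4/p}}$ and the critical norms gives $F_p\le\epsilon\int_M|\nabla w|_{g_0}^2d\mu_{g_0}+C_\epsilon F_2^{p/2}$; one pass through the inequality bounds $F_p$ and gives $\int_0^\infty\int_M|\nabla w|^2\,dt<\infty$, a second use of the interpolation yields $\int_0^\infty F_p\,dt<\infty$, and the $t_j$-trick gives $F_p\to0$. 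The price is that you import the Sobolev/trace machinery which the paper only needs later, in Lemmas \ref{second_mid_est} and \ref{lem4.3}.

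Two points need repair, and the first is a genuine gap as written. In your treatment of $III^{(p)}$ you leave a standalone term $C\bigl(F_2(t)+F_2(t)^{1/2}\bigr)$ on the right-hand side; but $\int_0^\infty F_2^{1/2}\,dt<\infty$ is \emph{not} available at this stage of the argument (it is only obtained at the end of Section \ref{Sect6} via the Simon-Lojasiewicz inequality), so integrating your differential inequality over $(0,\infty)$, or over $(t_j,t)$ for the limit statement, does not close. Keep the product structure instead: by Hölder with respect to the measure defining $F_p$ and Lemma \ref{volume_bdd}, $F_2^{1/2}\le CF_p^{1/p}$ and $F_{p-1}\le CF_p^{(p-1)/p}$, while $F_2\le C$ by Lemma \ref{lem4.2}; hence $|III^{(p)}|\le C(F_2+F_2^{1/2})F_{p-1}\le CF_p$, which is exactly the paper's estimate \eqref{III}. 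Second, your ``crux'' step is phrased as if \eqref{Sobolev_ineq} and \eqref{Sobolev_trace_ineq} alone bound $F_p\sim\|w\|_{L^2(M)}^2+\|w\|_{L^2(\partial M)}^2$ by $\epsilon\int|\nabla w|^2$ plus lower-order $F_{p'}$; they do not (they control the critical norms), so you must first insert the Lebesgue interpolation $\|w\|_{L^2}\le\|w\|_{L^{4/p}}^{1-\theta}\|w\|_{L^{2n/(n-2)}}^{\theta}$ (and its boundary analogue, absorbing the $B\|w\|_{L^2(\partial M)}^2$ term), and then run the two-pass argument sketched above, since $\frac{d}{dt}F_p\le CF_p+(\hbox{integrable})$ by itself yields no integrability of $F_p$.
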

\begin{proof}
We prove this lemma by induction on $p$. From  \eqref{4.18} and Lemma \ref{lem4.2}, Lemma \ref{mid_est_lem} holds for $p=2$. Suppose Lemma \ref{mid_est_lem} holds for some $p\in[2,\frac{n}{2}]$.
We can estimate some of the terms on the right hand side of \eqref{F_p_t} as follows:
by  \eqref{alpha_beta_lambda_bdd}, \eqref{alpha_t_beta_t_bdd} and Lemma \ref{lem4.2}, we have
\begin{align}\label{II}
&|II^{(p)}|\no\\
=&\left|\Big(\frac{p\lambda}{\alpha}-\frac{(p-1)\partial_t\alpha}{\alpha^2}\Big)\int_Mf|v|^{p}d\mu_{g}+(n-1)\Big(\frac{p\lambda}{\beta}-\frac{2(p-1)\partial_t \beta}{\beta^2}\Big)\int_{\partial M}h|v|^{p}d\sigma_{g}\right|\no\\
\leq& C(1+F_2(t)^{\frac{1}{2}})F_p(t)\leq CF_p(t).
\end{align}
Also, we have
\begin{align}\label{III}
&|III^{(p)}|\no\\
=&\left|p\Big(\frac{\partial_t\lambda}{\alpha}-\frac{\lambda\partial_t\alpha}{\alpha^2}\Big)\int_Mf|v|^{p-2}vd\mu_{g}+2(n-1)p\Big(\frac{\partial_t\lambda}{\beta}-\frac{\lambda\partial_t\beta}{\beta^2}\Big)\int_{\partial M}h|v|^{p-2}vd\sigma_{g}\right|\no\\
\leq& C(F_2(t)+F_2(t)^{\frac{1}{2}})F_{p-1}(t)\leq CF_{p}(t),
\end{align}
where the first inequality follows from \eqref{alpha_beta_lambda_bdd} and \eqref{lambda_t_bdd},
and the second inequality follows from H\"{o}lder's inequality, Lemmas \ref{volume_bdd} and \ref{lem4.2}.
Therefore, we can  rewrite \eqref{F_p_t} as
\begin{align*}
&(\frac{n}{2}-p)\alpha^{-1}\int_Mf |v|^p vd\mu_{g}+(n-1)(n-1-p)\beta^{-1}\int_{\partial M}h|v|^p vd\sigma_{g}\\
=& -\frac{d}{dt}F_p+I^{(p)}+II^{(p)}+III^{(p)}\\
\leq& -\frac{d}{dt}F_p+CF_p,
\end{align*}
where the last inequality follows from \eqref{II} and \eqref{III}. Together with the induction assumption for $p$, integrating the above inequality over $(0,\infty)$ indicates that
$$\int_0^\infty\int_M f|v|^p v d\mu_g dt<\infty \hbox{~~and~~} \int_0^\infty \int_{\partial M}h |v|^p v d\sigma_g dt<\infty.$$
Notice that
\begin{align*}
-\int_M f |v|^{p+1}d\mu_g=&-2\int_{M \cap \{v>0\}}f|v|^p v d\mu_g+\int_M f|v|^p v d\mu_g\\
\leq& C\int_M |v|^p d\mu_g+\int_M f|v|^p v d\mu_g,
\end{align*}
where the last inequality follows from Lemma \ref{lem:v_bdd} and the fact that $f<0$. Thus we obtain
\begin{equation}\label{est:int_M_F_p+1}
-\int_0^\infty\int_M f|v|^{p+1}d\mu_g dt<\infty.
\end{equation}
Similarly, we also have
\begin{equation}\label{est:int_bdry_F_p+1}
-\int_0^\infty\int_{\partial M} h |v|^{p+1}d\sigma_gdt<\infty.
\end{equation}
Thus \eqref{est:int_M_F_p+1} and \eqref{est:int_bdry_F_p+1} imply that $\int_0^\infty F_{p+1}(t)dt<\infty$.

If $p+1<\frac{n}{2}$, going back to \eqref{F_p_t} for $p+1$, we obtain
$$\frac{d}{dt}F_{p+1}\leq CF_{p+1}.$$
Integrating the above inequality over $(0,\infty)$ and using \eqref{est:int_M_F_p+1} and \eqref{est:int_bdry_F_p+1}, we obtain
$$\lim_{t \to \infty}F_{p+1}(t)=0.$$
Then we can repeat the above steps until reaching some $p$ with $p+1>\frac{n}{2}$ and at the same time $p\leq \frac{n}{2}$. By H\"{o}lder's and Young's inequalities, we have
\[F_{\frac{n}{2}}\leq F_p^sF_{p+1}^{1-s}\leq sF_p+(1-s)F_{p+1},\]
where $s=p+1-\frac{n}{2}\in(0,1)$ when $p\neq \frac{n}{2}$. Thus $\int_0^\infty F_{\frac{n}{2}}(t)dt<\infty$ in both cases and the induction is finished. Moreover, returning to \eqref{F_p_t} for $p=\frac{n}{2}$, we conclude that $\lim_{t \to \infty}F_{\frac{n}{2}}(t)=0.$
This completes the proof.
\end{proof}

\begin{lemma}\label{second_mid_est}
For $\frac{n}{2}\leq p \leq n-1$, there hold
\begin{equation}\label{second_mid_est:int_F_p}
\int_0^\infty F_p(t)dt<\infty
\end{equation}
and
\begin{equation}\label{second_mid_est:F_p}
\lim_{t \to \infty}F_p(t)=0.
\end{equation}
\end{lemma}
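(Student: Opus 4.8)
The strategy is to continue the induction on $p$ begun in Lemma \ref{mid_est_lem}, now in the range $\frac{n}{2}\le p\le n-1$, but the crucial new feature is that the coefficient $\frac{n}{2}-p$ of the ``bad'' cubic-type term $IV^{(p)}$ in \eqref{F_p_t} has changed sign. First I would take the identity \eqref{F_p_t} for a given $p$ in this range and, exactly as in Lemma \ref{mid_est_lem}, absorb the terms $II^{(p)}$ and $III^{(p)}$ into $CF_p(t)$ using \eqref{alpha_beta_lambda_bdd}, \eqref{alpha_t_beta_t_bdd}, \eqref{lambda_t_bdd} and H\"older's inequality together with Lemma \ref{volume_bdd} and Lemma \ref{lem4.2}; these estimates do not care about the sign of anything. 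The term $I^{(p)}=-(n-1)p(p-1)\int_M|v|^{p-2}|\nabla v|_g^2\,d\mu_g\le 0$ is again a good term that I would either discard or keep on the left-hand side.

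The heart of the matter is the term
\[
IV^{(p)}=(p-\tfrac{n}{2})\alpha^{-1}\int_M f|v|^pv\,d\mu_g+(n-1)(p-n+1)\beta^{-1}\int_{\partial M}h|v|^pv\,d\sigma_g .
\]
For $p\ge\frac{n}{2}$ the interior coefficient $p-\frac{n}{2}\ge 0$ and, since $f<0$, the contribution from the set $\{v\le 0\}$ is nonpositive, so only $\{v>0\}$ is dangerous; there Lemma \ref{lem:v_bdd} gives $0\le v\le C_0$, hence $|v|^pv\le C_0|v|^p$ on that set and the interior part of $IV^{(p)}$ is bounded by $CF_p(t)$. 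The boundary coefficient $(n-1)(p-n+1)$ is $\le 0$ exactly when $p\le n-1$, which is our hypothesis, so on the boundary the sign works in our favour on $\{v>0\}$ (where $h<0$ makes $h|v|^pv\le 0$), and on $\{v\le 0\}$ we again use $v$ bounded below, i.e. $|v|\le\max\{C_0,\sup|v|\}$ is \emph{not} available for the lower bound — here instead I use that $|v|^pv\ge -|v|^p\cdot(\text{const})$ is false in general, so more precisely: on $\{v\le0\}$, $|v|^pv=-|v|^{p+1}$, and since $h<0$, $(n-1)(p-n+1)\beta^{-1}h|v|^pv=(n-1)(p-n+1)\beta^{-1}h(-|v|^{p+1})\le 0$ because $(p-n+1)\le0$, $h<0$, and $-|v|^{p+1}\le 0$ give a product of sign $(-)(-)(-)=(-)$. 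Thus $IV^{(p)}\le CF_p(t)$ outright, with \emph{no} need for $F_{p+1}$ on the right. Combining, I obtain the clean differential inequality
\[
\frac{d}{dt}F_p(t)+(n-1)p(p-1)\int_M|v|^{p-2}|\nabla v|_g^2\,d\mu_g\le CF_p(t).
\]

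With this inequality in hand the rest mirrors Lemma \ref{lem4.2}: since $\int_0^\infty F_p(t)\,dt<\infty$ would give \eqref{second_mid_est:int_F_p} and \eqref{second_mid_est:F_p} by the Gronwall-type argument (integrate over $(t_j,t)$ along a sequence $t_j\to\infty$ with $F_p(t_j)\to0$), what remains is to \emph{propagate integrability in $t$} from $p$ to $p+1$. For that I return to \eqref{F_p_t} for the index $p$ itself, move $IV^{(p)}$ to the left, and read off $-\int_0^\infty\int_M f|v|^{p+1}d\mu_g\,dt<\infty$ and the analogous boundary integral — but wait, at $p=\frac{n}{2}$ the interior coefficient vanishes, so as in Lemma \ref{mid_est_lem} I would instead start the induction at the first integer-or-half-integer $p_0\ge\frac{n}{2}$ for which $F_{p_0}$-integrability is already known (namely $p_0=\frac{n}{2}$ from Lemma \ref{mid_est_lem}), then interpolate $F_q\le F_{p}^sF_{p+1}^{1-s}$ by H\"older to fill in non-integer exponents, exactly as in the last paragraph of Lemma \ref{mid_est_lem}'s proof. \textbf{The main obstacle} is the bookkeeping at the two endpoints: getting the sign of $IV^{(p)}$ to work one needs both $p\ge\frac n2$ (interior) and $p\le n-1$ (boundary) simultaneously, and the degeneracy of the coefficients at $p=\frac n2$ and $p=n-1$ forces the same interpolation trick used before; I expect no genuinely new analytic difficulty beyond carefully tracking these signs and using Lemma \ref{lem:v_bdd} to control $\{v>0\}$ in the interior and $\{v\le 0\}$ on the boundary.
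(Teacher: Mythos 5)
Your boundary bookkeeping is essentially the paper's, but the interior part of $IV^{(p)}$ contains a sign error that undermines the whole plan. On the set $\{v\le 0\}$ one has $|v|^pv=-|v|^{p+1}\le 0$ and $f<0$, hence $f|v|^pv=(-f)|v|^{p+1}\ge 0$; multiplied by the nonnegative coefficient $(p-\tfrac n2)\alpha^{-1}$ this gives a \emph{nonnegative} contribution to $IV^{(p)}$, not a nonpositive one, and $\{v\le 0\}$ (not $\{v>0\}$) is exactly the dangerous set: Lemma \ref{lem:v_bdd} bounds $v$ only from above, so on $\{v\le 0\}$ the size of $|v|$ is not pointwise controlled and the term $(p-\tfrac n2)\alpha^{-1}\int_{\{v\le 0\}}(-f)|v|^{p+1}d\mu_g$ is genuinely of $F_{p+1}$-type; it cannot be bounded by $CF_p(t)$. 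Consequently your ``clean'' inequality $\frac{d}{dt}F_p+c\int_M|v|^{p-2}|\nabla v|_g^2\,d\mu_g\le CF_p$ does not follow from sign considerations, and with it collapses the later step where you propose to ``read off'' $\int_0^\infty\int_M|v|^{p+1}d\mu_g\,dt<\infty$ by moving $IV^{(p)}$ to the left: for $p>\tfrac n2$ the interior coefficient sits on the wrong side, so no sign isolation yields the interior $(p+1)$-integral.

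What the paper does at precisely this point --- and this is the genuinely new ingredient compared with Lemma \ref{mid_est_lem} --- is to estimate $\bigl|\int_M f|v|^pv\,d\mu_g\bigr|\le C\int_M|v|^{p+1}d\mu_g$, interpolate by H\"older between the $L^{\frac{np}{n-2}}$ and $L^{p}$ norms of $v$ (this is where $p>\tfrac n2$ is used), apply Young's inequality and the Sobolev inequality \eqref{Sobolev_ineq} to $|v|^{p/2}$, and absorb the resulting $\epsilon\int_M|v|^{p-2}|\nabla v|_g^2\,d\mu_g$ into the good term $I^{(p)}$, the leftover being $CF_p+CF_p^{\frac{2p-n+2}{2p-n}}\le CF_p$ thanks to the induction hypothesis $F_p(t)\to 0$. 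Only the boundary term is disposed of by the sign argument you describe (this is where $p\le n-1$ enters). Integrating the resulting differential inequality gives the time-integrability of the gradient term \eqref{est:int_dv}, from which the interior $(p+1)$-integral is recovered through the same Sobolev estimate, while the boundary $(p+1)$-integral follows from the decomposition on $\{v>0\}$ using $v\le C_0$, much as you wrote; the final interpolation $F_{n-1}\le F_p^sF_{p+1}^{1-s}$ also matches the paper. Note finally that the endpoint $p=\tfrac n2$ causes no difficulty (the interior term simply vanishes there); the delicate endpoint is $p=n-1$, where the boundary coefficient degenerates.
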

\begin{proof}
As the proof of Lemma \ref{mid_est_lem}, we prove it by induction
on $p$. It follows from Lemma \ref{mid_est_lem} that \eqref{second_mid_est:int_F_p} and \eqref{second_mid_est:F_p}
are true for $p=\frac{n}{2}$.
We now suppose that \eqref{second_mid_est:int_F_p} and \eqref{second_mid_est:F_p} are true for some
$\frac{n}{2}\leq p<n-1$.
We rewrite \eqref{F_p_t} as
\begin{align}\label{A}
&(n-1)(n-1-p)\beta^{-1}\int_{\partial M}h|v|^p vd\sigma_{g}\no\\
=& -\frac{d}{dt}F_p+I^{(p)}+II^{(p)}+III^{(p)}
+(p-\frac{n}{2})\alpha^{-1}\int_Mf |v|^p vd\mu_{g}\no\\
\leq& -\frac{d}{dt}F_p+CF_p+I^{(p)}+(p-\frac{n}{2})\alpha^{-1}\int_Mf |v|^p vd\mu_{g},
\end{align}
where the last inequality follows from \eqref{II} and \eqref{III}. Recall that
$$
I^{(p)}=-(n-1)p(p-1)\int_M|v|^{p-2}|\nabla v|_g^2d\mu_g.$$

Notice that if $p>\frac{n}{2}$, then  for any $\epsilon>0$, there exists a constant $C(\epsilon)$ such that
\begin{align}\label{a}
\left|\int_{M}f |v|^p vd\mu_{g}\right|
&\leq C\int_{M} |v|^{p+1}d\mu_{g}\no\\
&\leq C\left(\int_{M} |v|^{\frac{np}{n-2}}d\mu_{g}\right)^{\frac{n-2}{2p}}\left(\int_{M} |v|^{p}d\mu_{g}\right)^{\frac{2p-n+2}{2p}}\no\\
&\leq \epsilon\left(\int_{M} |v|^{\frac{np}{n-2}}d\mu_{g}\right)^{\frac{n-2}{n}}
+C(\epsilon)\left(\int_{M} |v|^{p}d\mu_{g}\right)^{\frac{2p-n+2}{2p-n}},
\end{align}
where we have used the assumption $p>\frac{n}{2}$ in applying H\"{o}lder's and Young's inequalities.
Since $u$ is uniformly bounded by Lemmas \ref{lower_bdd_u} and \ref{upper_bdd_u},
we can deduce from \eqref{Sobolev_ineq} that
\begin{align}\label{c}
\left(\int_{M} |v|^{\frac{np}{n-2}}d\mu_{g}\right)^{\frac{n-2}{n}}
&\leq C\left(\int_{M} |v|^{\frac{np}{n-2}}d\mu_{g_0}\right)^{\frac{n-2}{n}}\no\\
&\leq C\int_M |v|^{p-2}|\nabla v|_{g_0}^2 d\mu_{g_0}+C\int_M |v|^{p} d\mu_{g_0}\no\\
&\leq C\int_M |v|^{p-2}|\nabla v|_{g}^2 d\mu_{g}+C\int_M |v|^{p} d\mu_{g}.
\end{align}
Combining \eqref{a} and \eqref{c}, we get
\begin{align}\label{e}
\left|(p-\frac{n}{2})\alpha^{-1}\int_{M}f |v|^p vd\mu_{g}\right|
\leq& C\int_{M} |v|^{p+1}d\mu_{g}\no\\
\leq& C\epsilon\int_M |v|^{p-2}|\nabla v|_{g}^2 d\mu_{g}
+CF_p(t)+CF_p(t)^{\frac{2p-n+2}{2p-n}}\no\\
\leq& C\epsilon\int_M |v|^{p-2}|\nabla v|_{g}^2 d\mu_{g}
+CF_p(t),
\end{align}
where we have used \eqref{alpha_beta_lambda_bdd} and the induction assumption \eqref{second_mid_est:F_p} for $p$.
Combining \eqref{A} and \eqref{e} and choosing $\epsilon$ small enough, we have
\begin{equation*}
\begin{split}
(n-1)(n-1-p)\beta^{-1}\int_{\partial M}h|v|^p vd\sigma_{g}+C\int_M |v|^{p-2}|\nabla v|_{g}^2 d\mu_{g}
\leq& -\frac{d}{dt}F_p(t)+CF_p(t).
\end{split}
\end{equation*}
Together with \eqref{second_mid_est:int_F_p} and \eqref{second_mid_est:F_p}, integrating the above inequality over $(0,\infty)$ indicates that
\begin{equation}\label{est:int_dv}
\int_0^\infty \int_M|v|^{p-2}|\nabla v|_g^2d\mu_gdt<\infty
\end{equation}
and
\begin{equation}\label{est:int_bdry_|v|^pv}
\int_0^\infty \int_{\partial M}h |v|^p v d\sigma_g dt<\infty.
\end{equation}
Then it follows from \eqref{e} and \eqref{est:int_dv} that
\begin{equation}\label{est:int_M_|v|^p+1}
\int_0^\infty\int_M |v|^{p+1}d\mu_gdt<\infty.
\end{equation}
Notice that
\begin{align*}
-\int_{\partial M}h |v|^{p+1} d\sigma_g dt=&-2\int_{\partial M\cap\{v>0\}}h |v|^p v d\sigma_g+\int_{\partial M}h |v|^p v d\sigma_g\\
\leq& C\int_{\partial M}|v|^p d\sigma_g+\int_{\partial M}h |v|^p v d\sigma_g,
\end{align*}
where the last inequality follows from Lemma \ref{lem:v_bdd} and the fact that $h<0$. Thus we obtain from \eqref{est:int_bdry_|v|^pv} that
\begin{equation}\label{second_est:int_bdry_F_p+1}
-\int_0^\infty\int_{\partial M}h |v|^{p+1}d\sigma_g dt<\infty.
\end{equation}
Combing this with \eqref{est:int_M_|v|^p+1}, we prove \eqref{second_mid_est:F_p} for $p+1$.

If $p+1<n-1$, we can go back to \eqref{F_p_t} for $p+1$ to obtain
$$\frac{d}{dt}F_{p+1}(t)\leq CF_{p+1}(t).$$
By integrating the above inequality over $(0,\infty)$
and using \eqref{second_mid_est:F_p} for $p+1$, we obtain
$$\lim_{t \to \infty}F_{p+1}(t)=0.$$

Therefore, we can repeat the above steps until we reach some $p$ satisfying $n-2<p\leq n-1$. By H\"{o}lder's and Young's inequalities, we have
\[F_{n-1}(t)\leq F_p(t)^sF_{p+1}(t)^{1-s}\leq sF_p(t)+(1-s)F_{p+1}(t),\]
where $s=p+2-n\in(0,1)$ when $p\neq n-1$. Thus $\int_0^\infty F_{n-1}(t)dt<\infty$ in both cases and the induction is finished. Moreover, returning to \eqref{F_p_t} for $p=n-1$, we conclude that $\lim_{t \to \infty}F_{n-1}(t)=0$. This completes the proof.
\end{proof}

\begin{lemma}\label{lem4.3}
For any $p>n-1$, there holds
\begin{equation*}
\frac{d}{dt}F_p(t)+CF_{\frac{(n-1)p}{n-2}}(t)^{\frac{n-2}{n-1}}
\leq CF_p(t)+CF_p(t)^{\frac{p-n+2}{p-n+1}}.
\end{equation*}
\end{lemma}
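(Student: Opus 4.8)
The plan is to start from the differential identity \eqref{F_p_t}, write
\[
\tfrac{d}{dt}F_p(t)=I^{(p)}+II^{(p)}+III^{(p)}+IV^{(p)},
\]
and use the non-positive gradient term $I^{(p)}=-(n-1)p(p-1)\int_M|v|^{p-2}|\nabla v|_g^2\,d\mu_g$ as a reservoir of negativity large enough to absorb both the bad part of $IV^{(p)}$ and the Sobolev quantity $F_{(n-1)p/(n-2)}(t)^{(n-2)/(n-1)}$ that is to appear on the left. The two middle terms require no new work: exactly as in \eqref{II} and \eqref{III} (whose derivations use only \eqref{alpha_beta_lambda_bdd}, \eqref{alpha_t_beta_t_bdd}, Lemma~\ref{volume_bdd} and Lemma~\ref{lem4.2}, none of which restricts $p$) one has $|II^{(p)}|+|III^{(p)}|\le CF_p(t)$. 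Throughout, the two-sided bounds on $u$ from Lemmas~\ref{lower_bdd_u} and \ref{upper_bdd_u} allow one to pass freely between $g$- and $g_0$-integrals, and the signs and uniform bounds of $f,h,\alpha,\beta$ give $\int_M|v|^p\,d\mu_g+\int_{\partial M}|v|^p\,d\sigma_g\le CF_p(t)$.

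Next I would estimate $IV^{(p)}$. Since $p>n-1>\tfrac n2$ for $n\ge3$, the coefficients $p-\tfrac n2$ and $(n-1)(p-n+1)$ are positive, but the integrands have no definite sign, so one only has $|IV^{(p)}|\le C\int_M|v|^{p+1}\,d\mu_g+C\int_{\partial M}|v|^{p+1}\,d\sigma_g$. Because $p>n-2$, the exponents satisfy $p<p+1<\tfrac{np}{n-2}$ and $p<p+1<\tfrac{(n-1)p}{n-2}$, so H\"older interpolation gives
\[
\int_M|v|^{p+1}\,d\mu_g\le C\Big(\int_M|v|^{\frac{np}{n-2}}\,d\mu_{g_0}\Big)^{\frac{n-2}{2p}}\Big(\int_M|v|^{p}\,d\mu_g\Big)^{\frac{2p+2-n}{2p}}
\]
together with the analogous boundary inequality, in which $\tfrac{np}{n-2}$ is replaced by $\tfrac{(n-1)p}{n-2}$ and the weights are $\tfrac{n-2}{p}$, $\tfrac{p-n+2}{p}$. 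Applying the Sobolev inequality \eqref{Sobolev_ineq} and the trace inequality \eqref{Sobolev_trace_ineq} to $w=|v|^{p/2}$ (whose Dirichlet energy is $\tfrac{p^2}{4}\int_M|v|^{p-2}|\nabla v|^2$) bounds the top-order factors by $C\big(\int_M|v|^{p-2}|\nabla v|_g^2\,d\mu_g+F_p(t)\big)$; raising to the relevant fractional powers (all less than $1$, since $p>n-1$ and $n>2$), using subadditivity of such powers, and invoking Young's inequality with the conjugate pairs $(\tfrac{2p}{n},\tfrac{2p}{2p-n})$ in the interior and $(\tfrac p{n-1},\tfrac p{p-n+1})$ on the boundary, I obtain, for any $\e>0$,
\[
|IV^{(p)}|\le\e\int_M|v|^{p-2}|\nabla v|_g^2\,d\mu_g+C\Big(F_p(t)^{\frac{2p+2-n}{2p-n}}+F_p(t)^{\frac{p+1}{p}}+F_p(t)^{\frac{p-n+2}{p-n+1}}\Big).
\]
Since $1\le\tfrac{p+1}{p}\le\tfrac{2p+2-n}{2p-n}\le\tfrac{p-n+2}{p-n+1}$ for $n\ge3$, the elementary inequality $x^a\le x^b+x^c$ (valid for $0<b\le a\le c$, $x>0$) collapses the bracket to $C\big(F_p(t)+F_p(t)^{\frac{p-n+2}{p-n+1}}\big)$.

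It remains to manufacture the left-hand term. Writing $F_{(n-1)p/(n-2)}(t)\le C\big(\int_M|v|^{\frac{(n-1)p}{n-2}}\,d\mu_g+\int_{\partial M}|v|^{\frac{(n-1)p}{n-2}}\,d\sigma_g\big)$ and using subadditivity of $x\mapsto x^{(n-2)/(n-1)}$, the boundary piece is controlled directly by \eqref{Sobolev_trace_ineq} applied to $|v|^{p/2}$, giving $\le C_\ast\int_M|v|^{p-2}|\nabla v|_g^2\,d\mu_g+CF_p(t)$ with $C_\ast$ the fixed trace constant, while for the interior piece the Cauchy--Schwarz interpolation $\int_M|v|^{\frac{(n-1)p}{n-2}}\le\big(\int_M|v|^{\frac{np}{n-2}}\big)^{1/2}\big(\int_M|v|^p\big)^{1/2}$, followed by \eqref{Sobolev_ineq} and Young's inequality, yields $\le\e\int_M|v|^{p-2}|\nabla v|_g^2\,d\mu_g+C(\e)F_p(t)$. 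Hence
\[
F_{\frac{(n-1)p}{n-2}}(t)^{\frac{n-2}{n-1}}\le(C_\ast+\e)\int_M|v|^{p-2}|\nabla v|_g^2\,d\mu_g+C(\e)F_p(t).
\]
Choosing the multiplicative constant $C$ in front of this quantity small enough that $C(C_\ast+\e)$ plus the $\e$'s accumulated from $IV^{(p)}$ total less than $(n-1)p(p-1)$, the whole positive $\int_M|v|^{p-2}|\nabla v|_g^2$ contribution is absorbed by $I^{(p)}$; adding \eqref{II}--\eqref{III} and relabelling constants produces exactly the asserted inequality
\[
\tfrac{d}{dt}F_p(t)+CF_{\frac{(n-1)p}{n-2}}(t)^{\frac{n-2}{n-1}}\le CF_p(t)+CF_p(t)^{\frac{p-n+2}{p-n+1}}.
\]

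The one genuinely delicate point — the expected main obstacle — is the exponent bookkeeping: one must check that every power of $F_p$ produced by the two Young's inequalities lies in $[1,\tfrac{p-n+2}{p-n+1}]$ so that it can be absorbed into $CF_p+CF_p^{(p-n+2)/(p-n+1)}$, and one must track the accumulated constant in front of $\int_M|v|^{p-2}|\nabla v|_g^2$ (coming from $IV^{(p)}$ and from the trace-Sobolev estimate of the manufactured term) so that it stays strictly below $(n-1)p(p-1)$. Everything else is a routine assembly of the conformal change-of-measure estimates from Lemmas~\ref{lower_bdd_u}--\ref{upper_bdd_u}, the bounds \eqref{II}--\eqref{III}, and the Sobolev and trace inequalities \eqref{Sobolev_ineq}--\eqref{Sobolev_trace_ineq}.
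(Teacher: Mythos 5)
Your proposal is correct and follows essentially the same route as the paper: the decomposition \eqref{F_p_t} with $|II^{(p)}|+|III^{(p)}|\le CF_p$, the H\"older/Young interpolations \eqref{a}--\eqref{b} with the same exponents (using $p>\tfrac n2$ and $p>n-1$), and the Sobolev and trace inequalities \eqref{Sobolev_ineq}--\eqref{Sobolev_trace_ineq} applied to $|v|^{p/2}$. The only difference is cosmetic: the paper first converts $I^{(p)}$ into negative powers of $\int_M|v|^{np/(n-2)}d\mu_g$ and $\int_{\partial M}|v|^{(n-1)p/(n-2)}d\sigma_g$ and absorbs there (then uses \eqref{4.27}--\eqref{4.28}), whereas you keep $I^{(p)}$ as a gradient term and absorb at that level; your exponent bookkeeping and the smallness choice of the constant in front of $F_{(n-1)p/(n-2)}^{(n-2)/(n-1)}$ are sound.
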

\begin{proof}
Again we are going to use \eqref{F_p_t} and estimate the terms on its right hand side.
Note that
$$
I^{(p)}=-(n-1)p(p-1)\int_M|v|^{p-2}|\nabla v|_g^2d\mu_g
\leq -C\int_M|v|^{p-2}|\nabla v|_{g_0}^2d\mu_{g_0}
$$
by Lemmas \ref{lower_bdd_u}, \ref{upper_bdd_u},
where $C$ is a positive uniform constant. Applying \eqref{Sobolev_ineq} and \eqref{Sobolev_trace_ineq} to $\varphi=|v|^{\frac{p}{2}}$, we obtain
\begin{equation}\label{I}
I^{(p)}\leq-C\left(\int_{M}|v|^{\frac{np}{n-2}}d\mu_{g}\right)^{\frac{n-2}{n}}-C\left(\int_{\partial M}|v|^{\frac{(n-1)p}{n-2}}d\sigma_{g}\right)^{\frac{n-2}{n-1}}+F_p(t).
\end{equation}

It follows from H\"{o}lder's and Young's inequalities
that for any $\epsilon>0$, there exists a constant $C(\epsilon)$ such that
\begin{equation}\label{b}
\begin{split}
\left|\int_{\partial M}h|v|^p vd\sigma_{g}\right|
&\leq C\int_{\partial M}|v|^{p+1}d\sigma_{g}\\
&\leq C\left(\int_{\partial M}|v|^{\frac{(n-1)p}{n-2}}d\sigma_{g}\right)^{\frac{n-2}{p}}
\left(\int_{\partial M}|v|^pd\sigma_{g}\right)^{\frac{p-n+2}{p}}\\
&\leq \epsilon\left(\int_{\partial M}|v|^{\frac{(n-1)p}{n-2}}d\sigma_{g}\right)^{\frac{n-2}{n-1}}
+C(\epsilon)\left(\int_{\partial M}|v|^pd\sigma_{g}\right)^{\frac{p-n+2}{p-n+1}},
\end{split}
\end{equation}
where we have used the assumption $p>n-1$ in applying H\"{o}lder's and Young's inequalities.
Since $u$ is uniformly bounded by Lemmas \ref{lower_bdd_u} and \ref{upper_bdd_u},
we can deduce from  \eqref{Sobolev_trace_ineq} that
\begin{equation}\label{d}
\begin{split}
\left(\int_{\partial M}|v|^{\frac{(n-1)p}{n-2}}d\sigma_{g}\right)^{\frac{n-2}{n-1}}
&\leq C\left(\int_{\partial M}|v|^{\frac{(n-1)p}{n-2}}d\sigma_{g_0}\right)^{\frac{n-2}{n-1}}\\
&\leq C\int_M|v|^{p-2}|\nabla v|_{g_0}^2d\mu_{g_0}
+C\int_{\partial M}|v|^{p}d\sigma_{g_0}\\
&\leq C\int_M|v|^{p-2}|\nabla v|_{g}^2d\mu_{g}
+C\int_{\partial M}|v|^pd\sigma_{g}.
\end{split}
\end{equation}
Therefore, by \eqref{a}, \eqref{c}, \eqref{b} and \eqref{d}, we can estimate
\begin{align}\label{IV}
&|IV^{(p)}|\no\\
=&\left|(p-\frac{n}{2})\alpha^{-1}\int_{M}f |v|^p vd\mu_{g}+(n-1)(p-n+1)\beta^{-1}\int_{\partial M}h|v|^p vd\sigma_{g}\right|\no\\
\leq& C\left|\int_{M}f |v|^p vd\mu_{g}\right|+C\left|\int_{\partial M}h|v|^p vd\sigma_{g}\right|\no\\
\leq& C\epsilon\left(\int_{M}|v|^{\frac{np}{n-2}}d\mu_{g}\right)^{\frac{n-2}{n}}
+C\epsilon\left(\int_{\partial M}|v|^{\frac{(n-1)p}{n-2}}d\sigma_{g}\right)^{\frac{n-2}{n-1}}+CF_p(t)+CF_p(t)^{\frac{p-n+2}{p-n+1}},
\end{align}
where we have used \eqref{alpha_beta_lambda_bdd} and the fact that
$$F_p(t)^{\frac{2p-n+2}{2p-n}}\leq F_p(t)+F_p(t)^{\frac{p-n+2}{p-n+1}}.$$

Combining \eqref{II}, \eqref{III}, \eqref{I}, \eqref{IV} and choosing $\epsilon$ small enough, we obtain
\begin{align}\label{4.26}
\frac{d}{dt}F_p(t)+C\left(\int_M|v|^{\frac{np}{n-2}}d\mu_{g}\right)^{\frac{n-2}{n}}
+C\left(\int_{\partial M}|v|^{\frac{(n-1)p}{n-2}}d\sigma_{g}\right)^{\frac{n-2}{n-1}}
\leq CF_p(t)+CF_p(t)^{\frac{p-n+2}{p-n+1}}.
\end{align}
By H\"{o}lder's inequality and  Lemma \ref{volume_bdd}, we have
\begin{equation}\label{4.27}
\left(\int_M|v|^{\frac{(n-1)p}{n-2}}d\mu_{g}\right)^{\frac{n-2}{n-1}}
\leq\left(\int_Md\mu_g\right)^{\frac{n-2}{n(n-1)}}
\left(\int_M|v|^{\frac{np}{n-2}}d\mu_{g}\right)^{\frac{n-2}{n}}
\leq C\left(\int_M|v|^{\frac{np}{n-2}}d\mu_{g}\right)^{\frac{n-2}{n}}.
\end{equation}
Also, it follows from \eqref{alpha_beta_lambda_bdd} that
\begin{align}\label{4.28}
F_{\frac{(n-1)p}{n-2}}(t)^{\frac{n-2}{n-1}}
&\leq C\left(\int_M|v|^{\frac{(n-1)p}{n-2}}d\mu_{g}+\int_{\partial M}|v|^{\frac{(n-1)p}{n-2}}d\sigma_{g}\right)^{\frac{n-2}{n-1}}\no\\
&\leq C\left(\int_M|v|^{\frac{(n-1)p}{n-2}}d\mu_{g}\right)^{\frac{n-2}{n-1}}+C\left(\int_{\partial M}|v|^{\frac{(n-1)p}{n-2}}d\sigma_{g}\right)^{\frac{n-2}{n-1}}.
\end{align}
Now the assertion follows from \eqref{4.26}-\eqref{4.28}.
\end{proof}

\begin{proposition}\label{prop4.1}
For any $n-1\leq p<\infty$, there holds
$$F_p(t)\to 0\mbox{ as }t\to\infty.$$
\end{proposition}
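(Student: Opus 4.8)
The plan is to prove Proposition \ref{prop4.1} by induction on intervals of length governed by the exponent $\frac{n-1}{n-2}$, starting from the base case $p = n-1$, which is supplied by Lemma \ref{second_mid_est}. Suppose, as the induction hypothesis, that for some $p \geq n-1$ we already know $F_p(t) \to 0$ as $t \to \infty$ and (where needed from the previous step) $\int_0^\infty F_p(t)\,dt < \infty$. We want to upgrade this to the exponent $\frac{(n-1)p}{n-2}$, which is strictly larger than $p$ since $p > 0$; iterating, the exponents $p, \frac{(n-1)p}{n-2}, (\frac{n-1}{n-2})^2 p, \dots$ tend to $\infty$, so finitely many steps cover any finite $p' < \infty$ (filling the gaps between consecutive exponents by the same Hölder--Young interpolation trick $F_{p'} \leq F_p^s F_{(n-1)p/(n-2)}^{1-s} \leq sF_p + (1-s)F_{(n-1)p/(n-2)}$ used at the end of Lemmas \ref{mid_est_lem} and \ref{second_mid_est}).

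The heart of the argument is Lemma \ref{lem4.3}, which gives the differential inequality
\begin{equation*}
\frac{d}{dt}F_p(t) + C F_{\frac{(n-1)p}{n-2}}(t)^{\frac{n-2}{n-1}} \leq C F_p(t) + C F_p(t)^{\frac{p-n+2}{p-n+1}}.
\end{equation*}
First I would integrate this over $(0,\infty)$: the right-hand side is integrable because $\int_0^\infty F_p(t)\,dt < \infty$ (induction hypothesis, obtained in the previous round) and because $F_p$ is bounded on $[0,\infty)$ — indeed, dropping the good term on the left gives $\frac{d}{dt}F_p \leq C F_p + C F_p^{\frac{p-n+2}{p-n+1}}$, and together with $\int_0^\infty F_p < \infty$ a standard ODE comparison argument (of the sort already used implicitly via $F_2 \leq F_2(t_j) + C\int_{t_j}^\infty F_2$ in Lemma \ref{lem4.2}) yields a uniform bound $F_p(t) \leq C$, whence $F_p(t)^{\frac{p-n+2}{p-n+1}} \leq C F_p(t)$ is also integrable. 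Consequently $\int_0^\infty F_{\frac{(n-1)p}{n-2}}(t)^{\frac{n-2}{n-1}}\,dt < \infty$. By Hölder's inequality and the uniform volume bounds of Lemma \ref{volume_bdd}, $F_{\frac{(n-1)p}{n-2}}(t) \leq C\, F_{\frac{(n-1)p}{n-2}}(t)^{\frac{n-2}{n-1}} \cdot (\text{something bounded})$ — more precisely $F_q \leq C F_q^{(n-2)/(n-1)}$ fails dimensionally, so instead one notes $\int_0^\infty F_{\frac{(n-1)p}{n-2}}(t)\,dt < \infty$ follows after first establishing a uniform bound on $F_{\frac{(n-1)p}{n-2}}$, or one argues directly on $\int_0^\infty F_{\frac{(n-1)p}{n-2}}^{(n-2)/(n-1)} < \infty$ plus the boundedness to extract a sequence $t_j \to \infty$ with $F_{\frac{(n-1)p}{n-2}}(t_j) \to 0$.

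Once a sequence $t_j \to \infty$ with $F_{q}(t_j) \to 0$ is in hand (where $q = \frac{(n-1)p}{n-2}$), I would return to the differential inequality \eqref{F_p_t} with exponent $q$ in place of $p$, discard the good negative gradient terms, and use the estimates \eqref{II}, \eqref{III}, \eqref{IV} (which are valid for every exponent $> n-1$, hence for $q$) to obtain $\frac{d}{dt}F_q(t) \leq C F_q(t) + C F_q(t)^{\frac{q-n+2}{q-n+1}}$. Integrating this over $(t_j, t)$ and invoking $\int_{t_j}^\infty F_q(s)\,ds \to 0$ (which we have, since $\int_0^\infty F_q < \infty$) together with $F_q(t_j) \to 0$, a Grönwall-type / continuity argument forces $F_q(t) \to 0$ as $t \to \infty$. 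This closes the induction step and, by interpolation, gives $F_{p'}(t) \to 0$ for all $n-1 \leq p' < \infty$.

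The main obstacle I anticipate is the bookkeeping around passing from $\int_0^\infty F_q^{(n-2)/(n-1)} < \infty$ to the two facts actually needed to run the next round: uniform boundedness of $F_q$ on $[0,\infty)$ and $\int_0^\infty F_q\,dt < \infty$. Neither is immediate from integrability of a power $< 1$ of $F_q$; both require first extracting the uniform bound via the ODE $\frac{d}{dt}F_q \leq C F_q + C F_q^{\frac{q-n+2}{q-n+1}}$ fed by the sequence $t_j$, and only then can one re-examine Lemma \ref{lem4.3}'s inequality (now with the left-hand good term retained) to upgrade $\int_0^\infty F_q^{(n-2)/(n-1)} < \infty$ to $\int_0^\infty F_q < \infty$ — essentially by observing that once $F_q$ is bounded, $F_q^{(n-2)/(n-1)} \geq c\, F_q$ on the set where $F_q$ is small and $F_q$ is controlled where it is not. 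Threading this circularity correctly — boundedness $\Rightarrow$ improved integrability $\Rightarrow$ decay, each step feeding the next exponent — is the delicate part, though all the ingredients are already present in Lemmas \ref{volume_bdd}, \ref{lem4.2}, \ref{second_mid_est} and \ref{lem4.3}.
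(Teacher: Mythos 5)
Your overall strategy is the paper's: iterate Lemma \ref{lem4.3} along the exponents $p,\ \tfrac{n-1}{n-2}p,\ \bigl(\tfrac{n-1}{n-2}\bigr)^2p,\dots$ starting from $p=n-1$ (Lemma \ref{second_mid_est}), extract sequences $t_j$ with $F_q(t_j)\to 0$, and interpolate to fill in intermediate exponents. But there is a genuine gap at the crucial step, and you have located it yourself without closing it. Your induction hypothesis asks for $\int_0^\infty F_p\,dt<\infty$ at full power, while integrating Lemma \ref{lem4.3} only yields $\int_0^\infty F_q^{\frac{n-2}{n-1}}\,dt<\infty$ for the new exponent $q=\tfrac{(n-1)p}{n-2}$. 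To recover the full-power integrability and the decay you need a uniform bound on $F_q$, and your proposed route to it --- Gr\"onwall on $\frac{d}{dt}F_q\leq CF_q+CF_q^{\frac{q-n+2}{q-n+1}}$ ``fed by the sequence $t_j$'' --- does not go through: integrating from $t_j$ gives $F_q(t)\leq F_q(t_j)+C\int_{t_j}^t\bigl(F_q+F_q^{\gamma}\bigr)ds$ with $\gamma=\frac{q-n+2}{q-n+1}>1$, and neither term on the right is known to be integrable at this stage (only the power $\tfrac{n-2}{n-1}<1$ of $F_q$ is); the superlinear term moreover allows $F_q$ to grow arbitrarily between the sampling times $t_j$, so no comparison argument of the type you invoke (nor the one used in Lemma \ref{lem4.2}, where the full power \emph{was} integrable) produces the uniform bound. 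Indeed you momentarily assume the conclusion when you write ``invoking $\int_{t_j}^\infty F_q\,ds\to 0$ (which we have, since $\int_0^\infty F_q<\infty$)''.

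The missing idea is the renormalization the paper uses: keep the induction hypothesis at the fractional power $\delta_k$ (with $\delta_0=1$, $\delta_{k}=\tfrac{n-2}{n-1}$ for $k\geq1$), and instead of estimating $F_{p_k}$ directly, study
$H(t)=\int_0^{F_{p_k}(t)}\frac{ds}{s^{\nu_1}+s^{\nu_2}}$ with $\nu_1=1-\delta_k$, $\nu_2=\frac{p_k-n+2}{p_k-n+1}-\delta_k$. The point is that dividing the differential inequality by $F_{p_k}^{\nu_1}+F_{p_k}^{\nu_2}$ kills \emph{both} the linear and the superlinear terms simultaneously, giving $\frac{d}{dt}H\leq CF_{p_k}^{\delta_k}$, whose right-hand side is integrable by the (fractional-power) induction hypothesis; since also $H(t_j)\leq\frac{1}{\delta_k}F_{p_k}(t_j)^{\delta_k}\to0$, one gets $H(t)\to0$, which first forces the uniform bound on $F_{p_k}$ (else $H$ would stay bounded below by $\int_0^1\frac{ds}{s^{\nu_1}+s^{\nu_2}}>0$ along a sequence) and then $F_{p_k}(t)\leq CH(t)\to0$. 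Only after this does one integrate Lemma \ref{lem4.3} to obtain $\int_0^\infty F_{p_{k+1}}^{\frac{n-2}{n-1}}\,dt<\infty$ and continue; the upgrade to $\int_0^\infty F_{p_{k+1}}\,dt<\infty$ is never needed. Without this device (or an equivalent one), the circularity you flag in your last paragraph --- boundedness needs integrability, integrability needs boundedness --- is not resolved, so as written the induction step does not close.
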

\begin{proof}
Set $p_k=(n-1)\displaystyle\left(\frac{n-1}{n-2}\right)^k$, $k\in \mathbb{N}$.
Assume that for $p=p_k$ and for some  $\delta_k\in(0, 1],$
there holds
\begin{equation}\label{induction_k}
\int_0^\infty F_{p_k}(t)^{\delta_k}dt<\infty.
\end{equation}
Note that \eqref{induction_k} for $k=0$ is true
with $\delta_0=1$, thanks to Lemma \ref{second_mid_est}.
We are going to show that
\begin{equation}\label{assertion}
\lim_{t\to\infty} F_{p_k}(t)=0
\end{equation}
and to establish \eqref{induction_k} with $p=p_{k+1}$
with $\delta_{k+1}=\displaystyle\frac{n-2}{n-1}$ for all $k\in\mathbb{N}$.
It follows from Lemma \ref{lem4.3} that
\begin{equation}\label{4.33}
\frac{d}{dt}F_{p_k}(t)+CF_{p_{k+1}}(t)^{\delta_{k+1}}
\leq CF_{p_k}(t)+CF_{p_k}(t)^{\frac{p_k-n+2}{p_k-n+1}}.
\end{equation}
By \eqref{induction_k}, there exits a sequence $\{t_j;j \in \mathbb{N}\}$
with $t_j\to\infty$ as $j\to\infty$ such that
\begin{equation}\label{4.34}
F_{p_k}(t_j)\to 0\mbox{ as }j\to\infty.
\end{equation}
If we define
$$H(t)=\int_0^{F_{p_k}(t)}\frac{ds}{s^{\nu_1}+s^{\nu_2}},$$
where
$0\leq\nu_1=1-\delta_k<1$ and $\nu_2=\displaystyle\frac{p_k-n+2}{p_k-n+1}-\delta_k\geq 0$,
then it follows from \eqref{4.34} that
\begin{equation}\label{4.35}
H(t_j)=\int_0^{F_{p_k}(t_j)}\frac{ds}{s^{\nu_1}+s^{\nu_2}}\leq \int_0^{F_{p_k}(t_j)}\frac{ds}{s^{\nu_1}}=
\frac{1}{\delta_k}F_{p_k}(t_j)^{\delta_k}\to 0\mbox{ as }j\to\infty.
\end{equation}
Also, we have
$$\frac{d}{dt}H(t)\leq CF_{p_k}(t)^{\delta_k}$$
by \eqref{4.33} and the definition of $H(t)$.
Integrating the above differential inequality from $t_j$ to $t$, we obtain
\begin{equation*}
H(t)\leq H(t_j)+C\int_{t_j}^\infty F_{p_k}(t)^{\delta_k}dt.
\end{equation*}
Combining this with \eqref{induction_k} and \eqref{4.35}, we
have
\begin{equation}\label{4.36}
H(t)\to 0\quad \mbox{ as }t\to\infty.
\end{equation}
Now we claim that there exists a positive constant $C$ such that
\begin{equation}\label{4.37}
F_{p_k}(t)\leq C \quad \mbox{ for all }t\geq 0.
\end{equation}
Otherwise, there exists a sequence $\{t_j;j\in \mathbb{N}\}$
with $t_j\to\infty$ as $j\to\infty$ such that
$F_{p_k}(t_j)> 1$ for all $k\in\mathbb{N}$. However, we would have
$$H(t_j)=\int_0^{F_{p_k}(t_j)}\frac{ds}{s^{\nu_1}+s^{\nu_2}}\geq \int_0^{1}\frac{ds}{s^{\nu_1}+s^{\nu_2}}>0\mbox{ for all }k\in\mathbb{N},$$
which contradicts \eqref{4.36}.
Therefore,
$$H(t)=\int_0^{F_{p_k}(t)}\frac{ds}{s^{\nu_1}+s^{\nu_2}}\geq\frac{F_{p_k}(t)}{F_{p_k}(t)^{\nu_1}+F_{p_k}(t)^{\nu_2}}\geq CF_{p_k}(t)$$
since $\nu_1, \nu_2\geq 0$. This together with \eqref{4.36} imply that $F_{p_k}(t)\to 0$ as $t\to\infty$.

This proves \eqref{assertion}.
Moreover, integrating \eqref{4.33} over $(0,\infty)$
and using  \eqref{induction_k} and \eqref{assertion}, we obtain
$$\int_0^\infty F_{p_{k+1}}(t)^{\delta_{k+1}}dt<\infty.$$
That is, \eqref{induction_k} holds for $p=p_{k+1}$
with $\delta_{k+1}=\displaystyle\frac{n-2}{n-1}$.
This proves the assertion.
\end{proof}

It follows from Proposition \ref{prop4.1}
that $F_p(t)$ is bounded for any $p\geq 1$, i.e.
$$
-\alpha^{-1}\int_Mf|\alpha f^{-1}R_g-\lambda|^pd\mu_{g}-2(n-1)\beta^{-1}\int_{\partial M}h|\beta h^{-1}h_g-\lambda|^pd\sigma_{g}\leq C$$
for some positive constant $C$. Combining this with
\eqref{alpha_beta_lambda_bdd}, Lemmas \ref{lower_bdd_u}
and \ref{upper_bdd_u}, we obtain
\begin{equation}\label{curv_bdd}
\int_M|R_g|^pd\mu_{g_0}\leq C\mbox{ and }\int_{\partial M}|h_g|^pd\sigma_{g_0}\leq C
\end{equation}
for all $t\geq 0$.

Therefore, we have proved that the conformal factor $u$ is bounded below and above,
and that the scalar curvature $R_{g(t)}$ and the mean curvature $h_{g(t)}$ are $L^p(M,g_0)$ for all $p\geq 2$ in $\overline M \times [0,T]$. Following a standard argument in \cite[Proposition 2.6]{Brendle4}, we can show that $u(x,t)$ is H\"older continuous with respect to $x$ and $t$. By standard parabolic theory, all higher derivatives of $u$ are bounded in $\overline M \times [0,T]$. Then the long time existence of the flows \eqref{mixed_cur_flows} follows.

We next establish the asymptotic convergence of the flows by using a trick by Brendle \cite{Brendle1}. In particular, noticing that such bounds on $u,\|R_{g(t)}\|_{L^p(M,g_0)}$ and $\|h_{g(t)}\|_{L^p(M,g_0)}$ with all $p \geq 2$ are uniform in $\overline M \times [0,\infty)$ thanks to Lemmas \ref{lower_bdd_u}, \ref{upper_bdd_u} and \eqref{curv_bdd}, we conclude from the standard parabolic equation theory that all higher derivatives of $u$ are uniformly bounded. By Ascoli-Arzela theorem, there exists a sequence $\{t_k\}$ with $t_k \nearrow \infty$ and $u_\infty \in C^\infty(M)$ such that for all $m \in \mathbb{N}$, $\|u(t_k)-u_\infty\|_{C^m(M)}\to 0$ as $k \to \infty$. Since $E[u(t)]$ is uniformly bounded and non-increasing, we define
$$E_\infty=\lim_{t \to \infty}E[u(t)] \hbox{~~and then~~} \lambda_\infty =\lim_{t \to \infty}\lambda(t)=-\lim_{t \to \infty}E[u(t)]=-E_\infty.$$

We define
\begin{align*}
\mathcal{M}(u):=&\hbox{grad}\q(u)\\
=&\left\{L_{g_0}u-\alpha(u)^{-1}\lambda(u)fu^{\frac{n+2}{n-2}} \hbox{~~in~~} M; B_{g_0}u -\frac{n-2}{2}\beta(u)^{-1}\lambda(u)hu^{\frac{n}{n-2}}\hbox{~~on~~}\pa M\right\}
\end{align*}
and use the norm
\begin{align*}
&\|\mathcal{M}(u)\|\\
=&\|L_{g_0}u-\alpha(u)^{-1}\lambda(u)fu^{\frac{n+2}{n-2}}\|_{L^2(M,g_0)}+\|B_{g_0}u -\frac{n-2}{2}\beta(u)^{-1}\lambda(u)hu^{\frac{n}{n-2}}\|_{L^2(\pa M,g_0|_{\pa M})},
\end{align*}
where
\begin{equation}\label{eqn:constrant_time_funcs_converge}
\begin{split}
&\alpha(u)=\frac{1}{a}\Big(\int_M -fu^{\frac{2n}{n-2}}d\mu_{g_0}\Big)^{\frac{2}{n}}, \quad \beta(u)=\frac{1}{b}\Big(\int_{\partial M}-hu^{\frac{2(n-1)}{n-2}}d\sigma_{g_0}\Big)^{\frac{1}{n-1}},\\
&\lambda(u)=-\frac{E[u]}{a\Big(\int_M -f u^{\frac{2n}{n-2}}d\mu_{g_0}\Big)^{\frac{n-2}{n}}+2(n-1)b\Big(\int_{\partial M}-h u^{\frac{2(n-1)}{n-2}}d\sigma_{g_0}\Big)^{\frac{n-2}{n-1}}}.
\end{split}
\end{equation}
Since $\q[u]$ is real analytic, we employ Simon-Lojasiewicz inequality \cite[Theorem 3]{Simon} to show that there exists $\theta \in (0,\frac{1}{2})$ such that
$$\|\mathcal{M}(u(t))\|\geq C (E[u(t)]-E_\infty)^{1-\theta},$$
for all sufficiently large $t$, thanks to the normalization \eqref{conserved_quantity}. Since $u(t)$ is uniformly bounded, by \eqref{energy_rate} we obtain
$$CF_2(T)\geq (E[u(T)]-E_\infty)^{2(1-\theta)}\geq C\left(\int_T^\infty F_2(t)dt\right)^{2(1-\theta)}$$
for some sufficiently large $T$. By \cite[Lemma 4.1]{Huang-Takac}, we get
$$\int_0^\infty F_2(t)^{\frac{1}{2}}dt<\infty.$$
Given any initial datum $u_0$, the above estimate not only indicates the asymptotic uniqueness of the flows \eqref{mixed_cur_flows}, but also the limits of
$$\lim_{t \to \infty}\alpha(t)=\alpha_\infty \hbox{~~and~~} \lim_{t \to \infty}\beta(t)=\beta_\infty$$
exist. Therefore, we conclude that the metric $g_\infty=u_\infty^{\frac{4}{n-2}}g_0$ satisfies
$$R_{g_\infty}=\frac{\lambda_\infty}{\alpha_\infty}f\mbox{~~in~~}M\hspace{2mm}\mbox{~~and~~ }\hspace{2mm}h_{g_\infty}=\frac{\lambda_\infty}{\beta_\infty}h\mbox{~~on~~}\partial M.$$
This proves Theorem \ref{mainthm}.

Moreover, we give a criterion of the uniqueness of asymptotic limits for different initial data.

From \eqref{eqn:constrant_time_funcs} and \eqref{conserved_quantity}, we know that
\begin{equation}\label{eq:preserve_const}
a(a\alpha(t))^{\frac{n-2}{2}}+2(n-1)b(b\beta(t))^{n-2}=1
\end{equation}
holds for any time $t \geq 0$ and so does the asymptotic limit.

\begin{proposition}\label{prop:criterion_asy_limits}
Under the same assumptions of Theorem \ref{mainthm}, let $u_0^{(1)}, u_0^{(2)}$ be the qualified initial data and $u_0^{(1)}\neq u_0^{(2)}$, the corresponding asymptotic limits along the flows \eqref{mixed_cur_flows} with initial data $u_0^{(1)}, u_0^{(2)}$ are $u_\infty^{(1)}, u_\infty^{(2)}$ respectively. If $E[u_\infty^{(1)}]=E[u_\infty^{(2)}]$, then $u_\infty^{(1)}=u_\infty^{(2)}$.
\end{proposition}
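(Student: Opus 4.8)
The plan is to reduce the statement to Escobar's uniqueness theorem \cite[Corollary 2]{escobar4} (already used in Section \ref{Sect2}) by showing that the two asymptotic limits prescribe the \emph{same} rescaled curvatures. By Theorem \ref{mainthm}, each limit metric $g_\infty^{(i)}=(u_\infty^{(i)})^{\frac{4}{n-2}}g_0$ satisfies $R_{g_\infty^{(i)}}=c_if$ in $M$ and $h_{g_\infty^{(i)}}=d_ih$ on $\partial M$, where $c_i:=\lambda_\infty^{(i)}/\alpha_\infty^{(i)}>0$ and $d_i:=\lambda_\infty^{(i)}/\beta_\infty^{(i)}>0$, so that $c_if<0$ and $d_ih<0$. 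Since the normalization \eqref{conserved_quantity} passes to the limit, $\lambda_\infty^{(i)}=-E[u_\infty^{(i)}]$; hence the hypothesis $E[u_\infty^{(1)}]=E[u_\infty^{(2)}]$ gives $\lambda_\infty^{(1)}=\lambda_\infty^{(2)}=:\lambda_\infty>0$. Letting $t\to\infty$ in \eqref{eq:preserve_const} gives $a(a\alpha_\infty^{(i)})^{\frac{n-2}{2}}+2(n-1)b(b\beta_\infty^{(i)})^{n-2}=1$ for $i=1,2$; as both summands are strictly increasing, it suffices to prove $\alpha_\infty^{(1)}=\alpha_\infty^{(2)}$, for this then forces $\beta_\infty^{(1)}=\beta_\infty^{(2)}$, hence $c_1=c_2$ and $d_1=d_2$, and \cite[Corollary 2]{escobar4} yields $g_\infty^{(1)}=g_\infty^{(2)}$, i.e. $u_\infty^{(1)}=u_\infty^{(2)}$.

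So I would argue by contradiction and assume, after relabeling if necessary, that $\alpha_\infty^{(1)}>\alpha_\infty^{(2)}$. The constraint then forces $\beta_\infty^{(1)}<\beta_\infty^{(2)}$, hence $c_1<c_2$ and $d_1>d_2$. Writing $g_\infty^{(2)}=\psi^{\frac{4}{n-2}}g_\infty^{(1)}$ with $\psi=u_\infty^{(2)}/u_\infty^{(1)}\in C^\infty(\overline M)$, $\psi>0$, the conformal covariance of $L_{g_0}$ and $B_{g_0}$ gives
\begin{equation*}
-\tfrac{4(n-1)}{n-2}\Delta_{g_\infty^{(1)}}\psi+c_1f\psi=c_2f\psi^{\frac{n+2}{n-2}}\ \text{ in }M,\qquad \tfrac{\partial\psi}{\partial\nu_{g_\infty^{(1)}}}+\tfrac{n-2}{2}d_1h\psi=\tfrac{n-2}{2}d_2h\psi^{\frac{n}{n-2}}\ \text{ on }\partial M.
\end{equation*}

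The core step is a maximum-principle analysis of $\psi$. First, $\min_{\overline M}\psi$ cannot be attained on $\partial M$: at a boundary minimum one has $\partial_{\nu_{g_\infty^{(1)}}}\psi\le 0$, so the boundary equation, together with $h<0$ and $d_1>d_2$, forces $\psi\ge (d_1/d_2)^{\frac{n-2}{2}}>1$ there, whence $\psi>1$ on all of $\overline M$; but then $(a\alpha_\infty^{(2)})^{\frac n2}=\int_M(-f)\psi^{\frac{2n}{n-2}}\,d\mu_{g_\infty^{(1)}}>\int_M(-f)\,d\mu_{g_\infty^{(1)}}=(a\alpha_\infty^{(1)})^{\frac n2}$, contradicting $\alpha_\infty^{(1)}>\alpha_\infty^{(2)}$. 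Hence the minimum occurs at an interior point $x_1$, where $\Delta_{g_\infty^{(1)}}\psi(x_1)\ge 0$; the interior equation and $f<0$ then give $\psi^{\frac{4}{n-2}}(x_1)\ge c_1/c_2=\alpha_\infty^{(2)}/\alpha_\infty^{(1)}$, so $\psi\ge(\alpha_\infty^{(2)}/\alpha_\infty^{(1)})^{\frac{n-2}{4}}$ throughout $\overline M$.

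Inserting this pointwise bound into $(a\alpha_\infty^{(2)})^{\frac n2}=\int_M(-f)\psi^{\frac{2n}{n-2}}\,d\mu_{g_\infty^{(1)}}$ shows the right-hand side is $\ge(\alpha_\infty^{(2)}/\alpha_\infty^{(1)})^{\frac n2}\int_M(-f)\,d\mu_{g_\infty^{(1)}}=(a\alpha_\infty^{(2)})^{\frac n2}$; thus equality holds, and since $-f>0$ on $M$ and $\psi$ is continuous, $\psi$ is the constant $\kappa:=(\alpha_\infty^{(2)}/\alpha_\infty^{(1)})^{\frac{n-2}{4}}<1$. But a constant conformal change relates the curvatures by $c_2/c_1=\kappa^{-\frac{4}{n-2}}$ and $d_2/d_1=\kappa^{-\frac{2}{n-2}}$, so $c_2/c_1=(d_2/d_1)^2$, which is impossible because $c_2/c_1>1$ whereas $0<d_2/d_1<1$. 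Therefore $\alpha_\infty^{(1)}=\alpha_\infty^{(2)}$, and the proof concludes via \cite[Corollary 2]{escobar4} as explained above. I expect the maximum-principle step to be the main obstacle — more precisely, the dichotomy on where $\min_{\overline M}\psi$ is attained — since it is there that the signs of $f$ and $h$ must be combined with the strict inequalities $c_1<c_2$ and $d_1>d_2$, both extracted from the single assumption $\alpha_\infty^{(1)}>\alpha_\infty^{(2)}$ via the conserved constraint \eqref{eq:preserve_const}, in precisely the right way; once the correct pointwise bound on $\psi$ is secured, the remainder is the equality case of an elementary inequality together with the uniqueness already available for \eqref{problem:smc}.
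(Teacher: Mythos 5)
Your argument is correct, but it takes a genuinely different route from the paper's. The paper's own proof is a short, self-contained variational computation: writing $v=u_\infty^{(2)}/u_\infty^{(1)}$ and using the conformal invariance of $E$, it expresses $\lambda=-E[u_\infty^{(2)}]$ as the energy of $v$ with respect to $g_\infty^{(1)}$, discards the gradient term, applies H\"older's inequality to $\int_M(-f)v^2\,d\mu_{g_\infty^{(1)}}$ and $\int_{\pa M}(-h)v^2\,d\sigma_{g_\infty^{(1)}}$, and closes the chain $\lambda\le\lambda$ via the conserved constraint \eqref{eq:preserve_const}; the equality case (vanishing of $\int_M|\nabla v|_{g_\infty^{(1)}}^2\,d\mu_{g_\infty^{(1)}}$ together with the normalization) forces $v\equiv 1$ directly, with no appeal to any external uniqueness theorem. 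You instead reduce the statement to $\alpha_\infty^{(1)}=\alpha_\infty^{(2)}$ (after which \eqref{eq:preserve_const} gives $\beta_\infty^{(1)}=\beta_\infty^{(2)}$ and the two limits solve the same problem \eqref{problem:smc} with the same negative data, so \cite[Corollary 2]{escobar4} finishes), and you prove that equality by a pointwise maximum-principle analysis of the quotient $\psi$, the equality case of the resulting integral bound, and the incompatibility $c_2/c_1=(d_2/d_1)^2$ for a constant conformal factor. The individual steps check out: the sign of $\partial_{\nu}\psi$ at a boundary minimum, the strict inequalities $c_1<c_2$ and $d_1>d_2$ extracted from \eqref{eq:preserve_const}, the identities $\int_M(-f)\,d\mu_{g_\infty^{(i)}}=(a\alpha_\infty^{(i)})^{n/2}$ and $\int_{\pa M}(-h)\,d\sigma_{g_\infty^{(i)}}=(b\beta_\infty^{(i)})^{n-1}$, and the final appeal to Escobar is legitimate precisely because you have first shown the two limits prescribe identical curvatures --- which is the reduction the introduction says is missing if one tries to use Escobar's theorem directly. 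In comparison, the paper's argument is shorter, purely integral, and independent of \cite{escobar4}, exploiting the homogeneity of $\q$ and the constraint in one stroke; yours is longer and leans on the external uniqueness result, but it exposes more structure: it shows that two limits of equal energy could differ at most by a constant rescaling, which is then ruled out by the different scaling weights of scalar and mean curvature, and it makes explicit where the negativity of $f$ and $h$ enters.
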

\begin{proof}
For simplicity, we omit the subscript $\infty$ in the following proof, for instance, $u_1=u_\infty^{(1)}$ and $u_2=u_\infty^{(2)}$ and other involved quantities. Recall that $\lambda(t)=-E[u(t)]$ and assumption that $E[u_1]=E[u_2]$ , then $\lambda_1=\lambda_2=\lambda$. We have
\begin{align*}
R_{g_1}=\frac{\lambda}{\alpha_1}f, h_{g_1}=\frac{\lambda}{\beta_1}h,\\
R_{g_2}=\frac{\lambda}{\alpha_2}f, h_{g_2}=\frac{\lambda}{\beta_2}h
\end{align*}
and $\alpha_1, \beta_1$ satisfy \eqref{eq:preserve_const}, so do $\alpha_2,\beta_2$.

Suppose $v=u_2/u_1$, then $g_2=v^{\frac{4}{n-2}}g_1$, from the conformal invariant of $E[u]$, we have
\begin{align*}
\lambda&=-\int_{M}\left(\frac{4(n-1)}{n-2}|\nabla v|_{g_1}^2+\frac{\lambda}{\alpha_1} fv^2\right)d\mu_{g_1}+2(n-1)\int_{\pa M}\frac{\lambda}{\beta_1}(-h)v^2d\sigma_{g_1}\\
&\leq \frac{\lambda}{\alpha_1}\int_{M} (-f)v^2 d\mu_{g_1}+2(n-1)\frac{\lambda}{\beta_1}\int_{\pa M}(-h)v^2d\sigma_{g_1}\\
&\leq \frac{\lambda}{\alpha_1}\left(\int_M (-f)d\mu_{g_1}\right)^{\frac{2}{n}}\left(\int_M (-f)v^{\frac{2n}{n-2}}d\mu_{g_1}\right)^{\frac{n-2}{n}}\\
&\qquad+2(n-1)\frac{\lambda}{\beta_1}\left(\int_{\pa M}(-h)d\sigma_{g_1}\right)^{\frac{1}{n-1}}\left(\int_{\pa M}(-h)v^\frac{2(n-1)}{n-2}d\sigma_{g_1}\right)^{\frac{n-2}{n-1}}\\
&=\lambda [a(a\alpha_2)^{\frac{n-2}{2}}+2(n-1) b(b\beta_2)^{n-2}]=\lambda.
\end{align*}
Thus the equality holds if and only if $v=1$. This completes the proof.
\end{proof}

\section{Applications of the flow equations}\label{Sect7}

Without loss of generality, we assume the background metric $g_0$ enjoys the property that $R_{g_0}<0$ and $h_{g_0}<0$ thanks to \eqref{initial_metric}. Assume $Q(M,\pa M)$ is negative and finite. To emphasize the parameters $a,b>0$, we define a conformal invariant by
$$Y_{a,b}(M,\pa M):=\inf_{0\not\equiv \varphi \in H^1(M,g_0)}\q[\varphi]=\inf_{\varphi \in C_{a,b}}E[\varphi],$$
where
\begin{align*}
C_{a,b}=\Big\{&\varphi \in H^1(M,g_0);\\
&\quad a\Big(\int_M -f|\varphi|^{\frac{2n}{n-2}}d\mu_{g_0}\Big)^{\frac{n-2}{n}}+2(n-1)b \Big(\int_{\partial M}-h|\varphi|^{\frac{2(n-1)}{n-2}}d\sigma_{g_0}\Big)^{\frac{n-2}{n-1}}=1\Big\}.
\end{align*}
By the assumptions on $f$ and $h$, it is not hard to show that
\begin{enumerate}
\item[(a)] $Y_{a,b}(M,\pa M)>-\infty$.
\item[(b)]  $Y(M,\pa M)<0$ holds if and only if $Y_{a,b}(M,\pa M)<0$.
\item[(c)] If $\varphi \in C_{a,b}$ and $E[\varphi]\leq \Lambda$, then $\|\varphi\|_{H^1(M,g_0)}\leq C(\Lambda)$.
\end{enumerate}
\begin{proposition}\label{prop:cont_yab}
Fix any $a>0$, the function $b \mapsto Y_{a,b}(M,\pa M)$ is continuous in $(0,\infty)$. Fix any $b>0$, the same assertion also holds for $a \mapsto Y_{a,b}(M,\pa M)$ in $(0,\infty)$.
\end{proposition}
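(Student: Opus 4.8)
The plan is to prove continuity of $b \mapsto Y_{a,b}(M,\pa M)$ at an arbitrary fixed point $b_0 \in (0,\infty)$; the argument for $a \mapsto Y_{a,b}$ is identical after swapping the roles of the two normalization terms. The key observation is that changing $b$ amounts only to reweighting the two summands in the constraint defining $C_{a,b}$, while the functional $E[\varphi]$ itself does not depend on $a$ or $b$. So the heart of the matter is to compare the constraint sets $C_{a,b}$ for $b$ near $b_0$ by an explicit rescaling, and then to use property (c) from the excerpt to keep minimizing sequences precompact.

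First I would set up the rescaling. Given $\varphi \in C_{a,b_0}$, write $P(\varphi) = a\big(\int_M -f|\varphi|^{\frac{2n}{n-2}}d\mu_{g_0}\big)^{\frac{n-2}{n}}$ and $Q_b(\varphi) = 2(n-1)b\big(\int_{\partial M} -h|\varphi|^{\frac{2(n-1)}{n-2}}d\sigma_{g_0}\big)^{\frac{n-2}{n-1}}$, so that $P(\varphi) + Q_{b_0}(\varphi) = 1$. For $b$ close to $b_0$ I look for $t = t(b,\varphi) > 0$ with $t\varphi \in C_{a,b}$, i.e. $t^2 P(\varphi) + t^2 (b/b_0) Q_{b_0}(\varphi) = 1$, which gives
\begin{equation*}
t(b,\varphi)^2 = \frac{1}{P(\varphi) + (b/b_0)\,Q_{b_0}(\varphi)}.
\end{equation*}
Since $P(\varphi), Q_{b_0}(\varphi) \in [0,1]$ and sum to $1$, the denominator lies between $\min\{1, b/b_0\}$ and $\max\{1, b/b_0\}$, so $t(b,\varphi)^2 \to 1$ as $b \to b_0$, uniformly in $\varphi \in C_{a,b_0}$. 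Because $E$ is quadratic, $E[t\varphi] = t^2 E[\varphi]$, and therefore
\begin{equation*}
Y_{a,b}(M,\pa M) \le t(b,\varphi)^2 E[\varphi] \quad \text{for every } \varphi \in C_{a,b_0}.
\end{equation*}
Taking an infimum over a minimizing sequence for $Y_{a,b_0}$ — and here I use property (c) to ensure $E[\varphi]$ can be taken bounded along that sequence, so that the error $|t(b,\varphi)^2 - 1|\,|E[\varphi]|$ is uniformly small — yields $\limsup_{b \to b_0} Y_{a,b}(M,\pa M) \le Y_{a,b_0}(M,\pa M)$. The symmetric rescaling (starting from $\psi \in C_{a,b}$ and rescaling into $C_{a,b_0}$, again with $E[\psi]$ bounded using (c), noting $Y_{a,b}$ stays bounded for $b$ near $b_0$ by the one-sided bound just obtained together with property (a)) gives $\liminf_{b \to b_0} Y_{a,b}(M,\pa M) \ge Y_{a,b_0}(M,\pa M)$, and the two inequalities together give continuity.

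The main obstacle is the uniformity: the scaling factor $t(b,\varphi)$ depends on $\varphi$ through $P(\varphi)$ and $Q_{b_0}(\varphi)$, so to pass the estimate $Y_{a,b} \le t(b,\varphi)^2 E[\varphi]$ to the infimum I must know the energies along a near-minimizing sequence are uniformly bounded — otherwise $|t(b,\varphi)^2 - 1|\,|E[\varphi]|$ need not be small. This is exactly what property (c) supplies (any $\varphi \in C_{a,b_0}$ with $E[\varphi] \le \Lambda$ has $\|\varphi\|_{H^1} \le C(\Lambda)$, and in particular we may restrict to $\varphi$ with, say, $E[\varphi] \le Y_{a,b_0}(M,\pa M) + 1$). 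A minor subtlety is confirming that $Y_{a,b}(M,\pa M)$ is finite and locally bounded in $b$ near $b_0$ before running the $\liminf$ estimate; this follows from property (a) for the lower bound and from the $\limsup$ estimate already established for the upper bound. With those two points in hand the rest is the routine two-sided comparison sketched above.
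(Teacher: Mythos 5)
Your proof is correct and follows essentially the same route as the paper: rescale near-minimizers of one constrained problem into the other constraint set and show the scaling factor tends to $1$, giving the two-sided comparison. The only (harmless) difference is that you get the convergence $t(b,\varphi)^2\to 1$ uniformly straight from the normalization $P+Q_{b_0}=1$, whereas the paper deduces $\mu_m\to 1$ from the uniform $H^1$ bound on the near-minimizers via properties (b)--(c).
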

\begin{proof}
We only prove the first assertion. Let $b_m\to b$ as $m\to \infty$. For each $m$,  there exists $\phi_m\in C_{a,b_m}$ such that
\begin{equation}\label{est:Y_a,b_m}
E[\phi_m]\leq Y_{a,b_m}(M,\pa M)+\frac{1}{m}.
\end{equation}
For the above $\phi_m$, there exists a unique $\mu_m>0$ such that $\mu_m\phi_m\in C_{a,b}$. Then it follows from (b) and \eqref{est:Y_a,b_m} that $\|\phi_m\|_{H^1(M,g_0)}$ is uniformly bounded.  Observe that
\begin{align*}
1=\mu_m^2\left[a\Big(\int_M -f|\phi_m|^{\frac{2n}{n-2}}d\mu_{g_0}\Big)^{\frac{n-2}{n}}+2(n-1)(b_m-b+b) \Big(\int_{\partial M}-h|\phi_m|^{\frac{2(n-1)}{n-2}}d\sigma_{g_0}\Big)^{\frac{n-2}{n-1}}\right],
\end{align*}
it yields that $\mu_m\to 1$ as $m\to \infty$. Since
\begin{align*}
E[\mu_m\phi_m]\leq \mu_m^2Y_{a,b_m}(M,\pa M)+\frac{\mu_m^2}{m},
\end{align*}
when $m$ is large enough, we have
\begin{align*}
Y_{a,b}(M,\pa M)\leq Y_{a,b_m}(M,\pa M)+\frac{2}{m}.
\end{align*}
By a similar argument, we can obtain
$$Y_{a,b_m}(M,\pa M)\leq Y_{a,b}(M,\pa M)+\frac{2}{m}$$
for all sufficiently large $m$. Thus we obtain the continuity of $Y_{a,b}(M,\pa M)$ in $b$.
\end{proof}

Next let us prove existence and uniqueness of positive minimizers in $C_{a,b}$ for $\q[\varphi]$. Let $b_m\to b$  as $m \to \infty$. For any $m \in \mathbb{N}$ choose $\epsilon_m=\min\{\frac{1}{m}, \frac{-Y_{a,b_m}(M,\pa M)}{2}\}$, there exists $u^{(m)}_0 \in C_{a,b_m}$ such that
\begin{align}\label{minimizing:initial data}
Y_{a,b_m}(M,\pa M)\leq E[u^{(m)}_0]\leq Y_{a,b_m}(M,\pa M)+\epsilon_m<0.
\end{align}
Here we have used Proposition \ref{prop:cont_yab} and property (b). Consider the flow equations \eqref{mixed_cur_flows} with initial datum $u^{(m)}_0$, it is known from the proof of Theorem \ref{mainthm} that there exists a unique asymptotic limit $u^{(m)}_\infty \in C_{a,b_m}$ satisfying $Y_{a,b_m}(M,\pa M)\leq E[u^{(m)}_\infty]\leq E[u^{(m)}_0]$. This together with (c) imply that $||u^{(m)}_\infty||_{H^1(M,g_0)}$ is uniformly bounded. Going to a subsequence if necessary, we obtain
\[u^{(m)}_\infty\rightharpoonup u^*\quad \text{weakly in }H^1(M,g_0)\]
and
$$u^{(m)}_\infty\to u^*\quad \text{strongly in }L^2(M,g_0) \hbox{~~and~~} L^2(\pa M,g_0|_{\pa M}).$$
Consequently, it follows from Fatou's lemma, \eqref{minimizing:initial data} and Proposition \ref{prop:cont_yab} that
\begin{equation}\label{upper_est:energy minimizer}
E[u^*]\leq \liminf_{m\to \infty} E[u^{(m)}_\infty]\leq \lim_{m \to \infty} E[u_0^{(m)}]=Y_{a,b}(M,\pa M)<0.
\end{equation}
 Each $u^{(m)}_\infty$ solves

\begin{align}\label{problem:smc3}
\left\{\begin{array}{ll}
\displaystyle L_{g_0}u=\frac{\lambda(u)}{\alpha(u)} fu^{\frac{n+2}{n-2}},&\hspace{2mm}\mbox{ in }M,\\
\displaystyle B_{g_0}u=\frac{n-2}{2}\frac{\lambda(u)}{\beta(u)}h u^{\frac{n}{n-2}},&\hspace{2mm}\mbox{ on }\partial M,
\end{array}
\right.
\end{align}
where $\alpha(u), \beta(u),\lambda(u)$ are defined in \eqref{eqn:constrant_time_funcs_converge}.

\begin{lemma}\label{lem:lower bbd minimizing}
There exists a positive constant $C^\ast$ independent of $m$ such that
$$u^{(m)}_\infty(x) \geq C^\ast \quad \hbox{~~for all~~} x \in \bar M.$$
\end{lemma}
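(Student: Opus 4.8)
The plan is to run the pointwise maximum principle on the elliptic system \eqref{problem:smc3}, in the same spirit as the proof of Lemma \ref{lower_bdd_u}; the only genuinely new issue is that all constants must come out uniform in $m$. So I would first extract uniform control of the coefficients $\lambda(u^{(m)}_\infty)$, $\alpha(u^{(m)}_\infty)$, $\beta(u^{(m)}_\infty)$ appearing in \eqref{problem:smc3}. Since $u^{(m)}_\infty\in C_{a,b_m}$, the denominator in the definition of $\lambda(\cdot)$ equals $1$, so $\lambda(u^{(m)}_\infty)=-E[u^{(m)}_\infty]>0$ because $E[u^{(m)}_\infty]<0$. From \eqref{minimizing:initial data} together with the choice $\epsilon_m\leq -\tfrac12 Y_{a,b_m}(M,\pa M)$ one gets $E[u^{(m)}_\infty]\leq E[u^{(m)}_0]\leq\tfrac12 Y_{a,b_m}(M,\pa M)<0$, while $E[u^{(m)}_\infty]\geq Y_{a,b_m}(M,\pa M)$ since $u^{(m)}_\infty\in C_{a,b_m}$; as $Y_{a,b_m}(M,\pa M)\to Y_{a,b}(M,\pa M)<0$ by Proposition \ref{prop:cont_yab}, these two bounds yield uniform constants $0<c_0\leq C_0$ with $c_0\leq\lambda(u^{(m)}_\infty)\leq C_0$ for all $m$.

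Next I would bound $\alpha(u^{(m)}_\infty)$ and $\beta(u^{(m)}_\infty)$ from below, uniformly in $m$. Feeding $u^{(m)}_\infty$ into the definitions of $Y(M,\pa M)$ and $Q(M,\pa M)$ and using $E[u^{(m)}_\infty]\leq -c_0<0$ gives
$$\Big(\int_M (u^{(m)}_\infty)^{\criti}d\mu_{g_0}\Big)^{\frac{n-2}{n}}\geq\frac{c_0}{-Y(M,\pa M)}>0,\qquad \Big(\int_{\pa M}(u^{(m)}_\infty)^{\critb}d\sigma_{g_0}\Big)^{\frac{n-2}{n-1}}\geq\frac{c_0}{-Q(M,\pa M)}>0,$$
where the negativity and finiteness of $Y(M,\pa M)$ and $Q(M,\pa M)$ are used in an essential way. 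Since $-f\geq\min_M(-f)>0$, $-h\geq\min_{\pa M}(-h)>0$, and $b_m\to b>0$, this produces uniform constants $\alpha_\ast,\beta_\ast>0$ with $\alpha(u^{(m)}_\infty)\geq\alpha_\ast$ and $\beta(u^{(m)}_\infty)\geq\beta_\ast$ for all $m$.

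With these bounds in hand I would fix $m$, let $x_0\in\overline M$ be a minimum point of $u^{(m)}_\infty$, and split into two cases exactly as in Lemma \ref{lower_bdd_u}. If $x_0\in M$, then $\Delta_{g_0}u^{(m)}_\infty(x_0)\geq 0$, so evaluating the interior equation of \eqref{problem:smc3} at $x_0$, dividing by $u^{(m)}_\infty(x_0)>0$ and rearranging (both sides being negative, as $f,R_{g_0}<0$ and $\lambda,\alpha>0$) gives
$$u^{(m)}_\infty(x_0)^{\frac{4}{n-2}}\geq\frac{\alpha(u^{(m)}_\infty)}{\lambda(u^{(m)}_\infty)}\min_{\overline M}\frac{R_{g_0}}{f}\geq\frac{\alpha_\ast}{C_0}\min_{\overline M}\frac{R_{g_0}}{f}>0;$$
if $x_0\in\pa M$, then $\tfrac{\pa}{\pa\nu_0}u^{(m)}_\infty(x_0)\leq 0$ since $x_0$ is a boundary minimum, so the boundary equation of \eqref{problem:smc3}, after dividing by $\tfrac{n-2}{2}u^{(m)}_\infty(x_0)>0$ and using $h,h_{g_0}<0$, gives
$$u^{(m)}_\infty(x_0)^{\frac{2}{n-2}}\geq\frac{\beta(u^{(m)}_\infty)}{\lambda(u^{(m)}_\infty)}\min_{\pa M}\frac{h_{g_0}}{h}\geq\frac{\beta_\ast}{C_0}\min_{\pa M}\frac{h_{g_0}}{h}>0.$$
Setting $C^\ast:=\min\big\{(\tfrac{\alpha_\ast}{C_0}\min_{\overline M}\tfrac{R_{g_0}}{f})^{\frac{n-2}{4}},\ (\tfrac{\beta_\ast}{C_0}\min_{\pa M}\tfrac{h_{g_0}}{h})^{\frac{n-2}{2}}\big\}$, which is positive and independent of $m$ by \eqref{initial_metric} and the uniform bounds above, one concludes $u^{(m)}_\infty\geq\min_{\overline M}u^{(m)}_\infty=u^{(m)}_\infty(x_0)\geq C^\ast$ on $\overline M$. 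The step I expect to require the most care is the uniform lower bound on $\alpha(u^{(m)}_\infty)$ and $\beta(u^{(m)}_\infty)$: this is precisely where one must exploit $Y(M,\pa M)<0$, $Q(M,\pa M)\in(-\infty,0)$, the choice of $\epsilon_m$ keeping $E[u^{(m)}_\infty]$ uniformly below $\tfrac12 Y_{a,b_m}(M,\pa M)$, and the continuity of $b\mapsto Y_{a,b}(M,\pa M)$ from Proposition \ref{prop:cont_yab} to make everything independent of $m$; the remainder is the pointwise argument already carried out in Lemmas \ref{lower_bdd_u}--\ref{upper_bdd_u}.
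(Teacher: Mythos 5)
Your argument is correct and essentially the paper's own proof: the paper likewise applies the pointwise maximum principle to \eqref{problem:smc3} to get $u^{(m)}_\infty\geq\min\{(\tfrac{\alpha_\infty^{(m)}}{\lambda_\infty^{(m)}}\min_{\bar M}(f^{-1}R_{g_0}))^{\frac{n-2}{4}},(\tfrac{\beta_\infty^{(m)}}{\lambda_\infty^{(m)}}\min_{\pa M}(h^{-1}h_{g_0}))^{\frac{n-2}{2}}\}$ and then bounds $\lambda_\infty^{(m)}$ above and $\alpha_\infty^{(m)},\beta_\infty^{(m)}$ below uniformly in $m$, using exactly the ingredients you identify ($Y(M,\pa M)<0$, $Q(M,\pa M)\in(-\infty,0)$, the energy monotonicity $E[u^{(m)}_\infty]\leq E[u^{(m)}_0]$, the choice of $\epsilon_m$, and the continuity of $Y_{a,b}$ in $b$). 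Your spelled-out maximum-principle case analysis and uniform coefficient bounds match the paper's reasoning, so no gap.
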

\begin{proof}
A standard maximum principle argument for the elliptic problem \eqref{problem:smc3} gives
\[u_\infty^{(m)}\geq \min\left\{\left( \frac{\alpha_\infty^{(m)}}{\lambda_\infty^{(m)}}\min_{\bar M}(f^{-1}R_{g_0})\right)^{\frac{n-2}{4}},\left(\frac{\beta_\infty^{(m)}}{\lambda_\infty^{(m)}}\min_{\pa M}(h^{-1}h_{g_0})\right)^{\frac{n-2}{2}}\right\}.\]
Observe that
\begin{align*}
\lambda_\infty^{(m)}=&-E[u_\infty^{(m)}],\\
\alpha_\infty^{(m)}=&\frac{1}{a}\left(\int_M -f(u_\infty^{(m)})^{\frac{2n}{n-2}}d\mu_{g_0}\right)^{\frac{2}{n}}\geq \frac{(-\max_{M}f)^{\frac{2}{n}}}{a}\left(\frac{E[u_\infty^{(m)}]}{Y(M,\pa M)}\right)^{\frac{2}{n-2}}\\
\geq&  \frac{(-\max_{M}f)^{\frac{2}{n}}}{a}\left(\frac{E[u_0^{(m)}]}{Y(M,\pa M)}\right)^{\frac{2}{n-2}}
\end{align*}
and similarly
\begin{align*}
\beta_\infty^{(m)}\geq \frac{(-\max_{\pa M}h)^{\frac{1}{n-1}}}{b_m}\left(\frac{E[u_0^{(m)}]}{2Q(M,\pa M)}\right)^{\frac{1}{n-2}}.
\end{align*}
Also from \eqref{minimizing:initial data} and Proposition \ref{prop:minimizer_continuity}, with a positive constant $C$ there holds $\frac{1}{C}\leq -E[u_0^{(m)}]\leq C$ for all $m \in \mathbb{N}$.
Putting the above estimates together, we obtain the desired assertion.
\end{proof}

Since $u^{(m)}_\infty\in C_{a,b_m}$ and $u_\infty^{(m)} \geq C^\ast$ by Lemma \ref{lem:lower bbd minimizing}, Fatou lemma shows
\begin{align*}
0<(\mu^\ast)^{-2}:=a\Big(\int_M -f{(u^*)}^{\frac{2n}{n-2}}d\mu_{g_0}\Big)^{\frac{n-2}{n}}+2(n-1)b \Big(\int_{\partial M}-h{(u^*)}^{\frac{2(n-1)}{n-2}}d\sigma_{g_0}\Big)^{\frac{n-2}{n-1}}\leq 1,
\end{align*}
which implies $\mu^*u^*\in C_{a,b}$. Then we have
\[Y_{a,b}(M,\pa M)\leq E[\mu^*u^*]=(\mu^*)^2E[u^*]\leq E[u^*]\leq Y_{a,b}(M,\pa M).\]
This forces $\mu^*=1$ and $E[u^*]=Y_{a,b}(M,\pa M)$, equivalently, $u^* \in C_{a,b}$ is a positive minimizer for $\q$. Moreover, noticing that
\[E[u^*]=Y_{a,b}(M,\pa M)=\lim_{m\to \infty} E[u^{(m)}_\infty]\]
by \eqref{upper_est:energy minimizer}, we obtain $||\nabla u^{(m)}_\infty||_{L^2(M,g_0)}\to ||\nabla u^*||_{L^2(M,g_0)}$  as $m \to \infty$ and thus  $u^{(m)}_\infty$ strongly converges to $u^*$  in $H^1(M,g_0)$.  In addition, the regularity theory of elliptic equations (cf. \cite{cherrier}) shows that $u^\ast$ is smooth. The uniqueness of such minimizers can follow from Proposition \ref{prop:criterion_asy_limits}.

\begin{remark}\label{rem:subcritical_approx}
Under the same assumptions of Theorem \ref{mainthm}, we sketch another subcritical approximation method to derive minimizer for $\q$. For $1\leq q \leq \frac{n+2}{n+2}$, let
\begin{align*}
\mu_q=&\inf_{0\not \equiv u \in H^1(M,g_0)}\frac{E[u]}{a\left(\int_M -f |u|^{q+1} d\mu_{g_0}\right)^{\frac{2}{q+1}}+2(n-1)b \left(\int_{\pa M}-h|u|^{\frac{q+3}{2}}d\sigma_{g_0}\right)^{\frac{4}{q+3}}}\\
:=& \inf_{0\not \equiv u\in H^1(M,g_0)}F_q[u].
\end{align*}
Using minimizing sequence argument, we can show that for each $1\leq q<\frac{n+2}{n-2}$, there exists a positive smooth minimizer $u_q$ for $\mu_q$, which satisfies
$$a\left(\int_M -f u_q^{q+1} d\mu_{g_0}\right)^{\frac{2}{q+1}}+2(n-1)b \left(\int_{\pa M}-hu_q^{\frac{q+3}{2}}d\sigma_{g_0}\right)^{\frac{4}{q+3}}=1$$
and solves
\begin{equation*}
\begin{cases}
\displaystyle L_{g_0}u_q=\frac{\lambda_q}{\alpha_q}f u_q^q &\qquad\hbox{~~in~~} M,\\
\displaystyle B_{g_0}u_q=\frac{n-2}{2}\frac{\lambda_q}{\beta_q}h u_q^{\frac{q+1}{2}} &\qquad\hbox{~~on~~} \pa M,
\end{cases}
\end{equation*}
where
$$\lambda_q=-F_q[u_q],~~\alpha_q=\frac{1}{a}\left(\int_M -f u_q^{q+1}d\mu_{g_0}\right)^{\frac{q-1}{q+1}},~~\beta_q=\frac{1}{b}\left(\int_{\pa M}-hu_q^{\frac{q+3}{2}}d\sigma_{g_0}\right)^{\frac{q-1}{q+3}}.$$
Then it is not hard to get $\limsup_{q\nearrow \frac{n+2}{n-2}}\mu_q \leq \mu_{\frac{n+2}{n-2}}=Y_{a,b}(M,\pa M)$. Let $\underline \mu:=\liminf_{q \nearrow \frac{n+2}{n-2}}\mu_{q}$. Choose a sequence $q_i \nearrow \frac{n+2}{n-2}$ as $i \to \infty$ and $q_0 \leq q_i < \frac{n+2}{n-2}$ for some $q_0 \in (1,\frac{n+2}{n-2})$, such that $\lim_{i\to \infty}\mu_{q_i}=\underline \mu$. Let $u_{q_i}$ be the positive minimizer for $F_{q_i}$. By property of $u_{q_i}$ and (a), we can prove that $u_{q_i}$ is uniformly bounded in $H^1(M,g_0)$ and uniformly bounded away from zero by a similar argument in Lemma \ref{lem:lower bbd minimizing}. Up to a subsequence, we obtain $u_{q_i} \rightharpoonup \Psi$ in $H^1(M,g_0)$ as $i\to \infty$. Moreover, we proceed as the above argument after Lemma \ref{lem:lower bbd minimizing} to show that $\Psi$ is a minimizer for $F_{\frac{n+2}{n-2}}=\q$ and $\underline \mu\geq Y_{a,b}(M,\pa M)$. Thus we also have $\lim_{q \nearrow \frac{n+2}{n-2}}\mu_{q}=Y_{a,b}(M,\pa M)$.
\end{remark}

Hence we now in a position to state:

\begin{proposition}\label{prop:minimizer_continuity}
For any $a,b>0$, $Y_{a,b}(M,\pa M)$ is achieved by a unique positive minimizer $u_{a,b} \in C_{a,b}$. Moreover, the functions $Y_{a,b}(M,\pa M), \alpha_{a,b}=\alpha(u_{a,b})$ and $\beta_{a,b}=\beta(u_{a,b})$ are continuous in $a$ for each fixed $b$ and in $b$ for each fixed $a$.
\end{proposition}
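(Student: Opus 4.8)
The existence of a positive smooth minimizer $u_{a,b}\in C_{a,b}$ for $Y_{a,b}(M,\partial M)$ and its uniqueness have already been obtained in the discussion preceding the statement: existence by running the flow \eqref{mixed_cur_flows} from near-minimizing initial data and passing to the limit, and uniqueness from Proposition \ref{prop:criterion_asy_limits}, since any minimizer $w\in C_{a,b}$ solves \eqref{problem:smc3} and is therefore a stationary solution of \eqref{mixed_cur_flows}, hence equals its own asymptotic limit, while all minimizers share the common energy $Y_{a,b}(M,\partial M)$. The continuity of $(a,b)\mapsto Y_{a,b}(M,\partial M)$ is exactly Proposition \ref{prop:cont_yab}. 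Thus it remains only to prove that $\alpha_{a,b}$ and $\beta_{a,b}$ are continuous, and by symmetry it suffices to treat continuity in $b$ with $a$ fixed.

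The plan is to first upgrade the continuity of the minimal value to continuity of the minimizer in $H^1(M,g_0)$: if $b_m\to b$ and $u_m:=u_{a,b_m}$, then $u_m\to u_{a,b}$ strongly in $H^1(M,g_0)$. First I would bound $\|u_m\|_{H^1(M,g_0)}$ uniformly, using property (c) together with $Y_{a,b_m}\to Y_{a,b}$ from Proposition \ref{prop:cont_yab} (the constant in (c) may be taken uniform for $b_m$ near $b$). Next, since $u_m$ solves \eqref{problem:smc3} with parameters $a,b_m$, the maximum-principle argument of Lemma \ref{lem:lower bbd minimizing} applies verbatim and yields a uniform lower bound $u_m\geq C^\ast>0$: one only needs that $\lambda(u_m)=-Y_{a,b_m}$ stays bounded away from $0$ and $\infty$ (true since $Y_{a,b}\in(-\infty,0)$) and that $\alpha(u_m),\beta(u_m)$ are bounded below, which follows from $E[u_m]=Y_{a,b_m}$ together with the defining inequalities for $Y(M,\partial M)$ and $Q(M,\partial M)$, exactly as in that lemma. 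Passing to a subsequence, $u_m\rightharpoonup w$ weakly in $H^1(M,g_0)$, strongly in $L^2(M,g_0)$ and on $\partial M$, and a.e., with $w\geq C^\ast>0$; I would also arrange that $\int_M -f u_m^{\criti}d\mu_{g_0}\to P$ and $\int_{\partial M}-h u_m^{\critb}d\sigma_{g_0}\to Q$ (these are bounded, hence convergent along a further subsequence, with $P,Q>0$). By Fatou's lemma $\int_M -f w^{\criti}d\mu_{g_0}\leq P$ and $\int_{\partial M}-h w^{\critb}d\sigma_{g_0}\leq Q$, while $u_m\in C_{a,b_m}$ and $b_m\to b$ give $aP^{\frac{n-2}{n}}+2(n-1)bQ^{\frac{n-2}{n-1}}=1$; hence the $(a,b)$-constraint value of $w$ is positive and at most $1$, so $\mu w\in C_{a,b}$ for some $\mu\geq1$. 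Weak lower semicontinuity of $E$ (the lower-order terms converging by $L^2$-convergence) gives $E[w]\leq\liminf_m E[u_m]=Y_{a,b}$, so $Y_{a,b}\leq E[\mu w]=\mu^2E[w]\leq E[w]\leq Y_{a,b}$ forces $\mu=1$ and $E[w]=Y_{a,b}$; by uniqueness $w=u_{a,b}$. Finally $E[u_m]=Y_{a,b_m}\to Y_{a,b}=E[u_{a,b}]$ together with the convergence of the lower-order terms yields $\|\nabla u_m\|_{L^2(M,g_0)}\to\|\nabla u_{a,b}\|_{L^2(M,g_0)}$, so $u_m\to u_{a,b}$ strongly in $H^1(M,g_0)$; since the limit does not depend on the chosen subsequence, the whole sequence converges.

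Granting this, the continuity of $\alpha_{a,b}$ and $\beta_{a,b}$ is immediate. By the continuous Sobolev embedding $H^1(M,g_0)\hookrightarrow L^{\criti}(M,g_0)$ and the continuous Sobolev trace embedding $H^1(M,g_0)\hookrightarrow L^{\critb}(\partial M,g_0|_{\partial M})$, strong $H^1$-convergence yields $u_m\to u_{a,b}$ strongly in $L^{\criti}(M,g_0)$ and in $L^{\critb}(\partial M,g_0|_{\partial M})$; since $f$ and $h$ are bounded, the maps $v\mapsto\int_M -f|v|^{\criti}d\mu_{g_0}$ and $v\mapsto\int_{\partial M}-h|v|^{\critb}d\sigma_{g_0}$ are continuous on these Lebesgue spaces, so $\alpha_{a,b_m}=\alpha(u_m)\to\alpha(u_{a,b})=\alpha_{a,b}$, and, using in addition $b_m\to b$, $\beta_{a,b_m}=b_m^{-1}\big(\int_{\partial M}-h u_m^{\critb}d\sigma_{g_0}\big)^{\frac{1}{n-1}}\to b^{-1}\big(\int_{\partial M}-h u_{a,b}^{\critb}d\sigma_{g_0}\big)^{\frac{1}{n-1}}=\beta_{a,b}$. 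The continuity in $a$ with $b$ fixed follows by the same argument with the roles of the two boundary terms and the two critical exponents interchanged.

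The main obstacle is the passage to the limit for the minimizers, specifically ruling out loss of compactness at the critical Sobolev exponents — that is, showing $E[w]=\lim_m E[u_m]$ and thereby upgrading weak $H^1$-convergence to strong. This is exactly where the standing hypothesis $Y(M,\partial M)<0$ (equivalently $Y_{a,b}<0$, so $\lambda(u_m)=-Y_{a,b_m}$ is bounded away from $0$) is used decisively: it drives the uniform lower bound $u_m\geq C^\ast$ and, through the chain $Y_{a,b}\leq\mu^2E[w]\leq E[w]\leq Y_{a,b}$, pins $E[w]$ to $Y_{a,b}$ so that no energy is lost at concentration scales. Once that equality is secured the remaining steps are routine weak-compactness and lower-semicontinuity bookkeeping.
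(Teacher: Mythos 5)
Your proposal is correct and follows essentially the same route as the paper: existence and uniqueness come from the flow construction preceding the statement together with Proposition \ref{prop:criterion_asy_limits}, continuity of $Y_{a,b}(M,\pa M)$ is Proposition \ref{prop:cont_yab}, and continuity of $\alpha_{a,b},\beta_{a,b}$ is obtained by showing the minimizers $u_{a,b_m}$ converge strongly in $H^1(M,g_0)$ to the unique minimizer $u_{a,b}$ (uniform $H^1$ bound, uniform positive lower bound as in Lemma \ref{lem:lower bbd minimizing}, Fatou, the scaling argument forcing $\mu=1$, and uniqueness) and then invoking the Sobolev and trace embeddings. Your write-up merely fills in the details that the paper compresses into ``with the same argument as above.''
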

\begin{proof}
We only need to prove the second assertion. Fix $a>0$, let $b_m \to b$ as $m \to \infty$. For each $m$, let $u_{a,b_m}\in C_{a,b_m}$ be a minimizer satisfying problem \eqref{problem:smc3} and $E[u_{a,b_m}]=Y_{a,b_m}(M,\pa M)$. With the same argument as above, up to a subsequence we conclude $u_{a,b_m}$ strongly converges in $H^1(M,g_0)$ to a minimizer $u_{a,b} \in C_{a,b}$ for $\q$ as $m \to \infty$. From Sobolev embedding and the uniqueness of $u_{a,b}$, we conclude that $\alpha_{a,b},\beta_{a,b}$ are continuous in $b$. For any fixed $b>0$, the continuity of $\alpha_{a,b}, \beta_{a,b}$ on $a$ can similarly obtained.
\end{proof}

A natural question naturally arises after Theorem \ref{mainthm}: Do there exist any $a,b>0$ and a metric $g_{a,b} \in [g_0]$ related to $a,b$ in some way such that $R_{g_{a,b}}=f$ and $h_{g_{a,b}}=h$? In order to answer this question, we define $g_{a,b}=u^{\frac{4}{n-2}}_{a,b}g_0$, where $u_{a,b}\in C_{a,b}$ is the unique minimizer for $\q$ in Proposition \ref{prop:minimizer_continuity}. Up to some positive constant in metric $g_{a,b}$, we may assume
\[R_{g_{a,b}}=f,\quad h_{g_{a,b}}=\frac{(-Y_{a,b}(M,\pa M)\alpha_{a,b})^{\frac{1}{2}}}{\beta_{a,b}}h.\]
We first need the following:
\begin{lemma}
(1) There exists some constant $C=C(f,h)>0$ such that
\begin{align}\label{ieq:yab_lowerbound}
 \max\left\{\frac{Y(M,\pa M)}{a},\frac{Q(M,\pa M)}{b}\right\}\leq CY_{a,b}(M,\pa M).
\end{align}

There exists some constant $C=C(f,h)>0$ such that if $a\geq C(f,g)b$, then
\begin{align}\label{ieq:a>b}
Y_{a,b}(M,\pa M)\leq \frac{1}{Ca}Y(M,\pa M).
\end{align}
If $b\geq C(f,h)a$, then
\begin{align}\label{ieq:b>a}
Y_{a,b}(M,\pa M)\leq \frac{1}{Cb}Q(M,\pa M).
\end{align}

(2) $\exists\, a,b>0$ such that ${(-Y_{a,b}(M,\pa M)\alpha_{a,b})^{\frac 12}}={\beta_{a,b}}$.
\end{lemma}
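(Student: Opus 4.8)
Part (1) is soft and I take it as given: testing the Rayleigh quotient defining $Y_{a,b}(M,\pa M)$ against a constant function gives $Y_{a,b}(M,\pa M)\le -E_{0}/(aA_{1}+bB_{1})$ with positive constants $E_{0},A_{1},B_{1}$ depending only on $f,h,g_{0}$ (here $R_{g_{0}}<0$, $h_{g_{0}}<0$ by \eqref{initial_metric}), which yields \eqref{ieq:a>b} and \eqref{ieq:b>a} once the structural constant is taken large; and for $\varphi\in C_{a,b}$ the constraint bounds $(\im|\varphi|^{\criti}\dmz)^{(n-2)/n}$ by $C/a$ and $(\idm|\varphi|^{\critb}\dsz)^{(n-2)/(n-1)}$ by $C/b$, which together with the definitions of $Y(M,\pa M)$, $Q(M,\pa M)$ and $E[\varphi]<0$ near the minimizer gives \eqref{ieq:yab_lowerbound}. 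The substance is part (2). The plan is to fix $b=1$ and study
\[
G(a):=\frac{-\,Y_{a,1}(M,\pa M)\,\alpha_{a,1}}{\beta_{a,1}^{2}},\qquad a\in(0,\infty),
\]
where $u_{a,1}\in C_{a,1}$ is the minimizer of Proposition~\ref{prop:minimizer_continuity} and $\alpha_{a,1}=\alpha(u_{a,1})$, $\beta_{a,1}=\beta(u_{a,1})$. By Proposition~\ref{prop:minimizer_continuity}, together with $Y_{a,1}(M,\pa M)<0$ (property (b)) and $\alpha_{a,1},\beta_{a,1}>0$, the function $G$ is continuous on $(0,\infty)$. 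I will show $G(a)\to+\infty$ as $a\to0^{+}$ and $G(a)\to0$ as $a\to+\infty$; the intermediate value theorem then produces $a_{*}\in(0,\infty)$ with $G(a_{*})=1$, i.e. $(-Y_{a_{*},1}(M,\pa M)\,\alpha_{a_{*},1})^{1/2}=\beta_{a_{*},1}$, so $(a,b)=(a_{*},1)$ answers (2). (From \eqref{eq:preserve_const} and the scalings $Y_{ta,tb}=t^{-1}Y_{a,b}$, $u_{ta,tb}=t^{-1/2}u_{a,b}$, whence $\alpha_{ta,tb}=t^{-n/(n-2)}\alpha_{a,b}$ and $\beta_{ta,tb}=t^{-(n-1)/(n-2)}\beta_{a,b}$, one even checks that $-Y_{a,b}(M,\pa M)\alpha_{a,b}\beta_{a,b}^{-2}$ is invariant under $(a,b)\mapsto(ta,tb)$, so the whole ray $\{(ta_{*},t):t>0\}$ works; this refinement is not needed.)

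The two limits rest on three inputs. First, with $b=1$ the conserved identity \eqref{eq:preserve_const} reads $a(a\alpha_{a,1})^{(n-2)/2}+2(n-1)\beta_{a,1}^{\,n-2}=1$, which gives the upper bounds $\alpha_{a,1}\le a^{-n/(n-2)}$ and $\beta_{a,1}\le(2(n-1))^{-1/(n-2)}$. Second, part (1) pins down $-Y_{a,1}(M,\pa M)$: for $a$ small, \eqref{ieq:b>a} gives $-Y_{a,1}(M,\pa M)\ge c>0$; for $a$ large, \eqref{ieq:a>b} and \eqref{ieq:yab_lowerbound} give $c/a\le-Y_{a,1}(M,\pa M)\le C/a$. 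Third, since $E[u_{a,1}]=Y_{a,1}(M,\pa M)<0$, the definitions of $Y(M,\pa M)$ and $Q(M,\pa M)$ give $(\im u_{a,1}^{\criti}\dmz)^{(n-2)/n}\ge -Y_{a,1}(M,\pa M)/(-Y(M,\pa M))$ and $(\idm u_{a,1}^{\critb}\dsz)^{(n-2)/(n-1)}\ge -Y_{a,1}(M,\pa M)/(-Q(M,\pa M))$, and since $-f\ge\min_{M}(-f)>0$ and $-h\ge\min_{\pa M}(-h)>0$ these turn into $\alpha_{a,1}\ge\frac{c}{a}(-Y_{a,1}(M,\pa M))^{2/(n-2)}$ and $\beta_{a,1}\ge c(-Y_{a,1}(M,\pa M))^{1/(n-2)}$.

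Combining these: as $a\to0^{+}$, the second and third inputs give $\alpha_{a,1}\ge c/a$, while the first bounds $\beta_{a,1}$ from above, so $G(a)\ge c/a\to+\infty$. As $a\to+\infty$, the first and second inputs make the numerator $-Y_{a,1}(M,\pa M)\alpha_{a,1}=O(a^{-1}\cdot a^{-n/(n-2)})=O(a^{-2(n-1)/(n-2)})$, while the second and third give $\beta_{a,1}\ge c\,a^{-1/(n-2)}$, hence $\beta_{a,1}^{2}\ge c\,a^{-2/(n-2)}$, so that $G(a)=O\big(a^{-2(n-1)/(n-2)+2/(n-2)}\big)=O(a^{-2})\to0$. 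This proves part (2); the metric $g_{a_{*},1}=u_{a_{*},1}^{4/(n-2)}g_{0}$ then satisfies $R_{g_{a_{*},1}}=f$ and $h_{g_{a_{*},1}}=h$, recovering Theorem~\ref{Thm_sub_super_sols} through a minimizer.

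I expect the real obstacle to be part (1), specifically the two-sided control of $-Y_{a,b}(M,\pa M)$ in the degenerate regimes $a/b\to0,\infty$: the upper bounds used above are cheap, drawing only on \eqref{eq:preserve_const} and \eqref{ieq:yab_lowerbound}, but it is exactly the one-sided energy estimates \eqref{ieq:a>b}--\eqref{ieq:b>a} that, fed into the defining inequalities for $Y(M,\pa M)$ and $Q(M,\pa M)$, upgrade them to the matching lower bounds on $\alpha_{a,1}$ and $\beta_{a,1}$. It is there that the hypotheses $f<0$, $h<0$ and $Q(M,\pa M)$ finite and negative all enter, and there that the exponents must be tracked carefully so that $G$ genuinely crosses the value $1$.
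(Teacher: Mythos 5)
Your proposal is correct, and it follows the same overall strategy as the paper (part (1) by test functions, part (2) by continuity of the quantity $(-Y_{a,b}(M,\pa M)\alpha_{a,b})^{1/2}\beta_{a,b}^{-1}$ plus degenerate-parameter limits and an intermediate value argument), but the execution differs in both parts in a way worth recording. For part (1), the paper proves \eqref{ieq:a>b} and \eqref{ieq:b>a} by testing with a near-extremal function for $Y(M,\pa M)$ (resp.\ $Q(M,\pa M)$), rescaled into $C_{a,b}$, with the threshold constant $C(f,h)$ defined by a ratio of integrals of that test function; your constant-function test is more elementary and works, but note it leans on the Section~7 normalization $R_{g_0}<0$, $h_{g_0}<0$ from \eqref{initial_metric}, which the paper's version does not need, and your constants are then calibrated against $-Y$ and $-Q$ (finiteness of $Q$ enters exactly as in the paper's \eqref{ieq:yab_lowerbound}). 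For part (2), the paper keeps both parameters in play: fixing $b$ and letting $a\to 0$ it shows via \eqref{ieq:b>a}, the constraint $u_{a,b}\in C_{a,b}$ and \eqref{qty:continuity} that the quantity tends to $+\infty$, while fixing $a$ and letting $b\to 0$ it shows via \eqref{ieq:a>b} and \eqref{ieq:yab_lowerbound} that it tends to $0$, and then concludes by a two-step continuity argument that only uses the separate continuity in $a$ and in $b$ furnished by Proposition \ref{prop:minimizer_continuity}. You instead freeze $b=1$ and obtain both extremes along the single ray $a\in(0,\infty)$, replacing the paper's $b\to 0$ limit by an $a\to\infty$ asymptotic: the upper bounds $\alpha_{a,1}\le a^{-n/(n-2)}$, $\beta_{a,1}\le(2(n-1))^{-1/(n-2)}$ come from \eqref{eq:preserve_const}, the two-sided bound $c/a\le -Y_{a,1}\le C/a$ from \eqref{ieq:a>b} and \eqref{ieq:yab_lowerbound}, and the lower bounds on $\alpha_{a,1},\beta_{a,1}$ from the defining inequalities for $Y$ and $Q$; your power counting $G(a)=O(a^{-2})$ and $G(a)\ge c/a$ is correct, and the one-variable intermediate value theorem then needs only continuity in $a$ for fixed $b$. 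This buys a cleaner final step (no two-step separate-continuity argument, no $b\to 0$ regime) at the price of needing the upper bound on $-Y_{a,1}$ for large $a$, which the paper also uses; your observation that $-Y_{a,b}\alpha_{a,b}\beta_{a,b}^{-2}$ is invariant under $(a,b)\mapsto(ta,tb)$ explains why the one-parameter reduction loses nothing. One small imprecision in your closing sentence: the minimizer's metric $u_{a_*,1}^{4/(n-2)}g_0$ has $R=f$ and $h_{g}=h$ only after multiplication by a suitable positive constant, exactly as in the paper's final theorem; this does not affect the lemma itself.
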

\begin{proof}
(1) Estimate \eqref{ieq:yab_lowerbound} follows from the definitions of conformal invariants and assumptions of $f,h$.

To show \eqref{ieq:a>b}, choose some $\varphi \in H^1(M,g_0)\setminus \{0\}$ such that
\[E[\varphi]\leq \frac{1}{2}Y(M,\pa M)\left(\int_M |\varphi|^{\frac{2n}{n-2}}d\mu_{g_0}\right)^{\frac{n-2}{n}}.\]
Let $\mu$ be the unique positive constant such that $\mu \varphi\in C_{a,b}$. If the constant $C=C(f,h)$ is chosen as
\[a \geq \frac{\left(\int_{\pa M}-h|\varphi|^{\frac{2(n-1)}{n-2}}d\mu_{g_0}\right)^{\frac{n-2}{n-1}}}{\left(\int_M -f|\varphi|^{\frac{2n}{n-2}}d\mu_{g_0}\right)^{\frac{n-2}{n}}}b :=C(f,h) b,\]
and by $\mu \varphi\in C_{a,b}$, we have
\[\mu^2a\left(\int_M -f|\varphi|^{\frac{2n}{n-2}}d\mu_{g_0}\right)^{\frac{n-2}{n}}\geq \frac 12.\]
Thus we obtain
\[Y_{a,b}(M,\pa M)\leq E[\mu \varphi]=\mu^2E[\varphi]\leq \frac{1}{2a}\frac{E[\varphi]}{\left(\int_M -f|\varphi|^{\frac{2n}{n-2}}d\mu_{g_0}\right)^{\frac{n-2}{n}}}\leq \frac{1}{Ca}Y(M,\pa M).\]

Similarly we can prove estimate \eqref{ieq:b>a}.

(2) Notice that
\begin{align}\label{qty:continuity}
&{(-Y_{a,b}(M,\pa M)\alpha_{a,b})^{\frac 12}}{\beta_{a,b}}^{-1}\no\\
=&(-Y_{a,b}(M,\pa M))^{\frac{1}{2}}\frac{b}{a^{\frac{1}{2}}}\Big(\int_M -fd\mu_{g_{a,b}}\Big)^{\frac{1}{n}}\Big(\int_{\partial M}-hd\sigma_{g_{a,b}}\Big)^{-\frac{1}{n-1}}.
\end{align}
Fix $b>0$ first. When $a$ small enough, it follows from \eqref{ieq:b>a} that
\[Y(M,\pa M)\leq \frac{E[u_{a,b}]}{\left(\int_{M}d\mu_{g_{a,b}}\right)^{\frac{n-2}{n}}}=\frac{Y_{a,b}(M,\pa M)}{\left(\int_{M}d\mu_{g_{a,b}}\right)^{\frac{n-2}{n}}}\leq \frac{Q(M,\pa M)}{Cb\left(\int_{M}d\mu_{g_{a,b}}\right)^{\frac{n-2}{n}}},\]
which means
$$\int_{M}-f d\mu_{g_{a,b}}\geq \left(\frac{Q(M,\pa M)}{C bY(M,\pa M)}\right)^{\frac{n}{n-2}}.$$
Also $u_{a,b}\in C_{a,b}$ implies that
$$\int_{\pa M}-h d\mu_{g_{a,b}}\leq \left(\frac{1}{2(n-1)b}\right)^{\frac{n-1}{n-2}}.$$
Together with \eqref{ieq:b>a} and \eqref{qty:continuity}, we conclude that for any fixed $b$, there holds
\begin{align}\label{limit_a}
\lim_{a \to 0}(-Y_{a,b}(M,\pa M)\alpha_{a,b})^{\frac 12}{\beta_{a,b}}^{-1}= \infty.
\end{align}

Next fix $a>0$. When $b$ is small enough, it follows from \eqref{ieq:a>b} that
\[Q(M,\pa M)\leq \frac{E[u_{a,b}]}{\left(\int_{\pa M}d\sigma_{g_{a,b}}\right)^{\frac{n-2}{n-1}}}=\frac{Y_{a,b}(M,\pa M)}{\left(\int_{\pa M}d\sigma_{g_{a,b}}\right)^{\frac{n-2}{n-1}}}\leq \frac{Y(M,\pa M)}{Ca\left(\int_{\pa M}d\sigma_{g_{a,b}}\right)^{\frac{n-2}{n-1}}}.\]
This means
$$\int_{\pa M}-hd\sigma_{g_{a,b}}\geq \left(\frac{Y(M,\pa M)}{Ca Q(M,\pa M)}\right)^{\frac{n-1}{n-2}}.$$
Also $u_{a,b}\in C_{a,b}$ implies
$$\int_M -f d\mu_{g_{a,b}}\leq a^{\frac{n}{2-n}}.$$
From \eqref{ieq:yab_lowerbound}, we have
$$-Y_{a,b}(M,\pa M)\leq- \frac{Y(M,\pa M)}{a}.$$
Combining these estimates and \eqref{qty:continuity}, we obtain for any fixed $a$, there holds
\begin{align}\label{limit_b}
\lim_{b \to 0}{(-Y_{a,b}(M,\pa M)\alpha_{a,b})^{\frac 12}}{\beta_{a,b}}^{-1}= 0.
\end{align}
Consequently, by Proposition \ref{prop:minimizer_continuity}, \eqref{limit_a} and \eqref{limit_b},  an elementary continuity argument shows that there exist some $a,b>0$ satisfying
$${(-Y_{a,b}(M,\pa M)\alpha_{a,b})^{\frac 12}}{\beta_{a,b}}^{-1}=1$$
as required.
\end{proof}

Therefore we can give a positive answer to that question and obtain the following result.

\begin{theorem}
Under the same assumptions in Theorem \ref{mainthm}, there exist some $a,b>0$ and the corresponding minimizer $u_{a,b} \in C_{a,b}$ for $\q$ such that a positive constant multiple of the metric $u_{a,b}^{4/(n-2)}g_0$ has scalar curvature $f$ and mean curvature $h$.
\end{theorem}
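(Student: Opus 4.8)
The plan is to assemble this statement directly from Proposition~\ref{prop:minimizer_continuity} and part (2) of the lemma preceding it, the only genuinely new manipulation being a constant rescaling of the metric. First I would recall that, for any $a,b>0$, Proposition~\ref{prop:minimizer_continuity} supplies a unique positive smooth minimizer $u_{a,b}\in C_{a,b}$ for $\q$, solving problem~\eqref{problem:smc3}. Since $u_{a,b}$ lies on the constraint set $C_{a,b}$, the denominator in the definition of $\lambda(u)$ in \eqref{eqn:constrant_time_funcs_converge} equals $1$, so that $\lambda(u_{a,b})=-E[u_{a,b}]=-Y_{a,b}(M,\pa M)$, which is a positive constant by property (b) and the standing hypothesis $Y(M,\pa M)<0$. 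Comparing \eqref{problem:smc3} with \eqref{eq:sc} and \eqref{eq:mc}, the metric $g_{a,b}=u_{a,b}^{4/(n-2)}g_0$ therefore satisfies
\[
R_{g_{a,b}}=\frac{-Y_{a,b}(M,\pa M)}{\alpha_{a,b}}\,f\quad\text{in }M,\qquad
h_{g_{a,b}}=\frac{-Y_{a,b}(M,\pa M)}{\beta_{a,b}}\,h\quad\text{on }\pa M,
\]
where $\alpha_{a,b}=\alpha(u_{a,b})$ and $\beta_{a,b}=\beta(u_{a,b})$.

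Next I would use the elementary scaling of curvature under a constant dilation: for $c>0$ and $\tilde g=c\,g_{a,b}$ one has $R_{\tilde g}=c^{-1}R_{g_{a,b}}$ and $h_{\tilde g}=c^{-1/2}h_{g_{a,b}}$. Choosing the positive constant $c=c_{a,b}:=-Y_{a,b}(M,\pa M)/\alpha_{a,b}$ normalizes the scalar curvature to $f$ exactly, after which
\[
h_{\tilde g}=\Big(\frac{\alpha_{a,b}}{-Y_{a,b}(M,\pa M)}\Big)^{1/2}\cdot\frac{-Y_{a,b}(M,\pa M)}{\beta_{a,b}}\,h
=\frac{\big(-Y_{a,b}(M,\pa M)\,\alpha_{a,b}\big)^{1/2}}{\beta_{a,b}}\,h,
\]
which is precisely the normalization recorded just before the statement. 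Note that $\tilde g=c_{a,b}\,u_{a,b}^{4/(n-2)}g_0$ is indeed a positive constant multiple of $u_{a,b}^{4/(n-2)}g_0$ (equivalently, conformal to $g_0$ with conformal factor a constant multiple of $u_{a,b}$).

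Finally I would invoke part (2) of the preceding lemma: combining the continuity of $a,b\mapsto Y_{a,b}(M,\pa M),\alpha_{a,b},\beta_{a,b}$ from Proposition~\ref{prop:minimizer_continuity} with the two degeneration limits \eqref{limit_a} and \eqref{limit_b}, an intermediate value argument yields $a,b>0$ for which $\big(-Y_{a,b}(M,\pa M)\,\alpha_{a,b}\big)^{1/2}/\beta_{a,b}=1$. For this choice of $(a,b)$ the rescaled metric $c_{a,b}\,g_{a,b}$ has scalar curvature $f$ in $M$ and mean curvature $h$ on $\pa M$, which is the assertion. The only delicate ingredient in this chain is part (2) of that lemma, namely the two-sided estimates \eqref{ieq:yab_lowerbound}, \eqref{ieq:a>b} and \eqref{ieq:b>a} that force the ratio $\big(-Y_{a,b}(M,\pa M)\,\alpha_{a,b}\big)^{1/2}/\beta_{a,b}$ to sweep across all of $(0,\infty)$ as $(a,b)$ varies; granting those, the present theorem is a short bookkeeping argument built on the solved elliptic system \eqref{problem:smc3} together with a constant rescaling of the metric.
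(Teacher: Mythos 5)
Your proposal is correct and follows essentially the same route as the paper: the normalization $R_{g_{a,b}}=f$, $h_{g_{a,b}}=(-Y_{a,b}(M,\pa M)\,\alpha_{a,b})^{1/2}\beta_{a,b}^{-1}h$ via a constant rescaling of $g_{a,b}=u_{a,b}^{4/(n-2)}g_0$ is exactly what the paper records before the theorem, and the existence of $a,b$ with $(-Y_{a,b}(M,\pa M)\,\alpha_{a,b})^{1/2}=\beta_{a,b}$ is precisely part (2) of the preceding lemma, invoked through the continuity statement of Proposition \ref{prop:minimizer_continuity} and the limits \eqref{limit_a}, \eqref{limit_b}. Your scaling computation $R_{cg}=c^{-1}R_g$, $h_{cg}=c^{-1/2}h_g$ and the identification $\lambda(u_{a,b})=-Y_{a,b}(M,\pa M)>0$ are both accurate, so no gap remains.
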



\begin{thebibliography}{99}
%
%

\bibitem{Almaraz}
S. Almaraz, \textit{ Convergence of scalar-flat metrics on manifolds with boundary under a Yamabe-type flow}, J. Differential Equations 259 (2015), no. 7, 2626-2694.

\bibitem{Almaraz1}
S. Almaraz, \textit{ The asymptotic behavior of Palais-Smale sequences on manifolds with boundary. Pacific J. Math. 269 (2014), no. 1, 1-17}.

\bibitem{Almaraz-Sun}
S. Almaraz and L. Sun, \textit{Convergence of the Yamabe flow on manifolds with minimal boundary}, preprint (2016).

\bibitem{Araujo1}
 H. Araujo, \textit{Critical points of the total scalar curvature plus total mean curvature functional}, Indiana Univ. Math. J. 52 (2003), no. 1, 85-107.

\bibitem{Araujo2}
H. Araujo, \textit{Existence and compactness of minimizers of the Yamabe problem on manifolds with boundary}, Comm. Anal. Geom. 12 (2004), no. 3, 487-510.




\bibitem{aubin_book}
T. Aubin, \textit{Some nonlinear problems in Riemannian geometry}, Springer Monographs in Mathematics, Springer-Verlag, Berlin, 1998.




\bibitem{Brendle2}
 S. Brendle, \textit{A family of curvature flows on surfaces with boundary}, Math. Z. 241 (2002), no. 4, 829-869.

\bibitem{Brendle1}
S. Brendle, \textit{A generalization of the Yamabe flow for manifolds with boundary}, Asian J. Math. 6 (2002), 625-644.



\bibitem{Brendle4} S. Brendle, \textit{Convergence of the Yamabe flow for arbitrary initial
energy}, J. Differential Geom. 69 (2005), 217--278.


\bibitem{Brendle5} S. Brendle, \textit{Convergence of the Yamabe flow in dimension 6 and higher.} Invent.
Math. 170 (2007),  541-576.


\bibitem{Brendle&Chen}
S. Brendle and S. Chen, \textit{An existence theorem for the Yamabe problem on manifolds with boundary}, J. Eur. Math. Soc. 16 (2014), 991-1016.



\bibitem{ChenHo}
X. Chen and P. T. Ho,  \textit{Conformal curvature flows on compact manifold of negative Yamabe constant}, preprint (2015).

\bibitem{cherrier}
P. Cherrier, \textit{ Probl\`{e}mes de Neumann non lin\'{e}aires sur les vari\'{e}t\'{e}s Riemannienes},
J. Funct. Anal. 57 (1984) 154-206.

\bibitem{Djadli-Mal-Ahm}
A. Djadli,  A. Malchiodi and M. Ould Ahmedou, \textit{ Prescribing scalar and boundary mean curvature on the three dimensional half sphere},  J. Geom. Anal. 13 (2003), no. 2, 255-289.


\bibitem{escobar2}
J. Escobar, \textit{ Addendum:``Conformal deformation of a Riemannian metric to a scalar flat metric with constant mean curvature on the boundary'' [Ann. of Math. (2) 136 (1992), no. 1, 1Ð50; MR1173925 (93e:53046)]}, Ann. of Math. (2) 139 (1994), no. 3, 749-750.

\bibitem{escobar1}
J. Escobar, \textit{Conformal deformation of a Riemannian metric to a scalar flat metric with constant mean curvature on the boundary}, Ann. of Math. (2) 136 (1992), no. 1, 1-50.

\bibitem{escobar}
J. Escobar, \textit{ Conformal deformation of a Riemannian metric to a constant scalar curvature metric with constant mean curvature on the boundary}, Indiana Univ. Math. J. 45 (1996), no. 4, 917-943.

\bibitem{escobar3}
J. Escobar, \textit{The Yamabe problem on manifolds with boundary}, J. Differential Geom. 35 (1992), no. 1, 21-84.

\bibitem{escobar4}
J. Escobar,
\textit{Uniqueness and non-uniqueness of metrics with prescribed scalar and mean curvature on compact manifolds with boundary},
J. Funct. Anal. 202 (2003), no. 2, 424-442.

\bibitem{han-li1}
Z. C. Han and Y. Y. Li, \textit{The existence of conformal metrics with constant scalar curvature and constant boundary mean curvature}, Comm. Anal. Geom. 8 (2000), no. 4, 809-869.

\bibitem{han-li2}
 Z. C. Han and Y. Y. Li, \textit{The Yamabe problem on manifolds with boundary: existence and compactness results}, Duke Math. J. 99 (1999), no. 3, 489-542.

\bibitem{Hebey-Vaugon}
E. Hebey and M. Vaugon, \textit{ The best constant problem in the Sobolev embedding theorem for complete Riemannian manifolds}, Duke Math. J. 79 (1995), no. 1, 235-279.

\bibitem{Huang-Takac}
S. Huang and P. Takac, \textit{ Convergence in gradient-like systems which are asymptotically autonomous and analytic,} Nonlinear Anal. 46 (2001), no. 5, Ser. A: Theory Methods, 675-698.

\bibitem{kazdan-warner}
J. Kazdan and F. Warner, \textit{ Scalar curvature and conformal deformation of Riemannian structure}, J. Differential Geometry 10 (1975), 113-134.


\bibitem{Lee&Parker} J. M. Lee and T. H. Parker, \textit{The Yamabe problem},
Bull. Amer. Math. Soc. (N.S.) 17 (1987), 37--91.

\bibitem{li-zhu1}
 Y. Li and M. Zhu,  \textit{Sharp Sobolev trace inequalities on Riemannian manifolds with boundaries.}
 Comm. Pure Appl. Math. 50 (1997), no. 5, 449-487.

\bibitem{Marques1}
F.  Marques,
\textit{Conformal deformations to scalar-flat metrics with constant mean curvature on the boundary},
Comm. Anal. Geom. 15 (2007), no. 2, 381-405.

\bibitem{Marques2}
F.  Marques,
\textit{Existence results for the Yamabe problem on manifolds with boundary},
Indiana Univ. Math. J. 54 (2005), no. 6, 1599--1620.

\bibitem{Simon}
L. Simon, \textit{Asymptotics for a class of nonlinear evolution equations, with applications to geometric problems,} Ann. of Math. (2) 118 (1983), no. 3, 525-571.

\bibitem{ZhangL} L. Zhang, \textit{Prescribing curvatures on three dimensional Riemannian manifolds with boundaries}, Trans. Amer. Math. Soc. 361 (2009), no. 7, 3463-3481.


\end{thebibliography}
\end{document}